\theoremstyle{plain}
\newtheorem*{thm*}{Theorem}
\newtheorem*{rem*}{Remark}
\newtheorem*{Almgren2}{Section 2.7(2) in \cite{Frederick J.
Almgren}}
\newtheorem*{Almgren3}{Section 2.9 in \cite{Frederick J.
Almgren}}
\newtheorem*{Almgren4}{Section 2.10 in \cite{Frederick J.
Almgren}}
\newtheorem*{Dir1}{Theorem 2.16 in
\cite{Frederick J. Almgren}}
\newtheorem*{Almgren5}{Theorem A.1.6(17) in \cite{Frederick J.
Almgren}}
\newtheorem*{thmmorrey}{Theorem 3.5.2 in \cite{Morrey}}
\newtheorem*{lemwillem}{Lemma 23.7 in \cite{Willem}}
\newtheorem*{Almgrenm}{Theorem 2.12
in \cite{Frederick J. Almgren}}
\newtheorem{thm}{Theorem}
\newtheorem{lem}[thm]{Lemma}
\newtheorem{prop}[thm]{Proposition}
\newtheorem{definition}[thm]{Definition}
\newcommand{\D}{\displaystyle}
\newcommand{\DF}[2]{\frac{\D#1}{\D#2}}
\def\Xint#1{\mathchoice
{\XXint\displaystyle\textstyle{#1}}%
{\XXint\textstyle\scriptstyle{#1}}%
{\XXint\scriptstyle\scriptscriptstyle{#1}}%
{\XXint\scriptscriptstyle\scriptscriptstyle{#1}}%
\!\int}
\def\XXint#1#2#3{{\setbox0=\hbox{$#1{#2#3}{\int}$}
\vcenter{\hbox{$#2#3$}}\kern-.5\wd0}}
\def\dashint{\Xint-}
\title{Regularity of Dirichlet nearly minimizing multiple-valued functions}
\author{Jordan Goblet and Wei Zhu}
\address{Jordan Goblet\\
Département de mathématique\\
Université catholique de Louvain\\
Chemin du cyclotron 2\\
1348 Louvain-La-Neuve (Belgium)} \email{goblet@math.ucl.ac.be}
\address{Wei Zhu\\
Department of Mathematics\\
Rice University\\
6100 Main Street\\
Houston, TX 77005 (U.S.A)} \email{weizhu@rice.edu}
\date{\today}
\keywords{Multiple-valued function, quasiminimizer,
$\omega$-minimizer, almost minimizer}
\begin{document}

\maketitle
\begin{abstract}
In this paper, we extend the related notions of Dirichlet
quasiminimizer, $\omega-$minimizer and almost minimizer to the
framework of multiple-valued functions in the sense of Almgren and
prove Hölder regularity results. We also give examples of those
minimizers with various branch sets.
\end{abstract}

\section*{Introduction}

In \cite{Frederick J. Almgren} F. J. Almgren, Jr. has developed a
theory of functions defined on a bounded open subset $\Omega$ of
$\mathbb{R}^m$ with smooth boundary and taking values in the space
${\bf Q}_{Q}(\mathbb{R}^n)$ of unordered $Q$ tuples of points of
$\mathbb{R}^{n}$ (see \cite{Frederick J. Almgren2} for a summary).
In terms of Dirichlet's integral naturally defined for appropriate
functions $\Omega\rightarrow {\bf Q}_{Q}(\mathbb{R}^n)$ he
obtained the following existence and regularity results:
\begin{thm*}
For each appropriate function $v:\partial \Omega\rightarrow {\bf
Q}_{Q}(\mathbb{R}^n)$ there exists a function $u:\Omega\rightarrow
{\bf Q}_{Q}(\mathbb{R}^n)$ having boundary values $v$ and of least
Dirichlet integral among such functions. Furthermore, each such
minimizer $u$ is Hölder continuous.
\end{thm*}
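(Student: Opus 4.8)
The plan is to treat the two assertions separately: \emph{existence} of a minimizer by the direct method of the calculus of variations, and \emph{H\"older regularity} by proving a Morrey-type decay estimate for the Dirichlet integral on small interior balls. Throughout, write $\mathrm{Dir}(u,A)$ for the Dirichlet integral of $u$ over $A$.

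For existence I would first fix a bi-Lipschitz embedding $\xi\colon{\bf Q}_Q(\mathbb{R}^n)\to\mathbb{R}^N$ and a Lipschitz retraction $\pi\colon\mathbb{R}^N\to\xi({\bf Q}_Q(\mathbb{R}^n))$ (Sections 2.9 and 2.10 in \cite{Frederick J. Almgren}). These show that the admissible class $\mathcal{A}_v$ of $W^{1,2}$ functions $\Omega\to{\bf Q}_Q(\mathbb{R}^n)$ with trace $v$ on $\partial\Omega$ is nonempty: harmonically extend $\xi\circ v$ to a classical $W^{1,2}$ map $\Omega\to\mathbb{R}^N$, compose with $\pi$, and pull back by $\xi^{-1}$. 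Next, let $(u_k)$ be a minimizing sequence for $\mathrm{Dir}(\cdot,\Omega)$ in $\mathcal{A}_v$; a Poincar\'e inequality together with the fixed boundary datum bounds $\|\xi\circ u_k\|_{W^{1,2}(\Omega)}$, so after extracting a subsequence the compactness theorem (Theorem 2.12 in \cite{Frederick J. Almgren}) yields $u_k\to u$ in $L^2$ with $u\in\mathcal{A}_v$, the trace surviving the weak $W^{1,2}$ convergence. Lower semicontinuity of $\mathrm{Dir}(\cdot,\Omega)$ under this convergence then gives $\mathrm{Dir}(u,\Omega)\le\liminf_k\mathrm{Dir}(u_k,\Omega)=\inf_{\mathcal{A}_v}\mathrm{Dir}(\cdot,\Omega)$, so $u$ is a Dirichlet minimizer.

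For regularity it suffices to prove that there exist $\alpha\in(0,1)$ and $C=C(m,n,Q)$ such that every minimizer $u$ satisfies
\[
\mathrm{Dir}\big(u,B_\rho(y)\big)\le C\Big(\tfrac{\rho}{r}\Big)^{m-2+2\alpha}\,\mathrm{Dir}\big(u,B_r(y)\big)\qquad\text{whenever }B_r(y)\subset\subset\Omega,\ 0<\rho\le r;
\]
Morrey's lemma (Theorem 3.5.2 in \cite{Morrey}) and the integral characterisation of H\"older spaces then give $u\in C^{0,\alpha}_{\mathrm{loc}}(\Omega)$, and an analogous comparison near $\partial\Omega$ handles continuity up to the boundary. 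The estimate itself I would obtain by comparison: for a.e.\ $\sigma\in(r/2,r)$ one has $u|_{\partial B_\sigma(y)}\in W^{1,2}\big(\partial B_\sigma(y),{\bf Q}_Q(\mathbb{R}^n)\big)$ with $\int_{r/2}^{r}\big(\int_{\partial B_\sigma}|D_\tau u|^2\big)\,d\sigma\le\mathrm{Dir}(u,B_r)$, so one may pick a ``good'' radius $\sigma$; form the competitor $w\in\mathcal{A}_v$ equal to $u$ outside $B_\sigma(y)$ and equal to $\xi^{-1}\circ\pi\circ h$ inside, where $h$ is the harmonic extension of $\xi\circ u|_{\partial B_\sigma}$. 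Minimality and the scaling of harmonic extensions give $\mathrm{Dir}(u,B_\sigma)\le\mathrm{Dir}(w,B_\sigma)\le C\sigma\int_{\partial B_\sigma}|D_\tau u|^2$, and a hole-filling iteration in $\sigma$ turns this into geometric decay of $\mathrm{Dir}(u,B_\cdot(y))$, hence decay with some exponent.

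The main obstacle is precisely this last step. Because $\pi$ is only Lipschitz, the crude competitor bounds $\mathrm{Dir}(w,B_\sigma)$ merely by a \emph{fixed} multiple of the minimal energy, and the hole-filling exponent so produced need not reach $m-2+2\alpha$ when $m\ge3$. To close the gap one exploits the structure of $u$ near its branch set: away from the branch set $u$ decomposes locally into finitely many single-valued harmonic branches (in particular smooth there), so the sharp harmonic comparison applies; the branch set has codimension at least $2$, and the decay across it is obtained by induction on $Q$, or by a dimension-reduction argument, as in \cite{Frederick J. Almgren}. Making this local splitting compatible with the global comparison, and controlling the energy lost in the retraction, is the technical core; alternatively one can run a blow-up/compactness contradiction, using the compactness theorem and a harmonic approximation lemma, to produce the same decay. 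Either route yields the displayed estimate, hence Theorem 2.16 in \cite{Frederick J. Almgren} and the claimed H\"older continuity.
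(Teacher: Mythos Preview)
The paper does not itself prove this theorem: it is Almgren's existence and regularity result, quoted in the Introduction as background (existence is later attributed to Theorem~2.2(2) in \cite{Frederick J. Almgren}). The closest thing here to a proof of the regularity half is Theorems~\ref{Kregulm2} and~\ref{regulm3} specialized to $K=1$; but those arguments \emph{assume} Almgren's key estimate (Theorem~2.12 in \cite{Frederick J. Almgren}), namely $\mathrm{Dir}(f;U^m(x,r))\le\frac{1-\varepsilon_Q}{m-2}\,r\,\mathrm{dir}(f;\partial U^m(x,r))$ for a minimizer $f$, and then simply integrate the resulting differential inequality $\phi(r)\le\frac{1-\varepsilon_Q}{m-2}\,r\,\phi'(r)$ to obtain the Morrey decay. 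That final integration step is exactly what you sketch at the end of your second paragraph.

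The genuine gap is the one you yourself flag in your third paragraph. Your crude competitor $\xi^{-1}\circ\pi\circ h$, with $h$ the $\mathbb{R}^N$-valued harmonic extension, gives at best $\mathrm{Dir}(u;B_\sigma)\le C\sigma\int_{\partial B_\sigma}|D_\tau u|^2$ with $C\ge\frac{1}{m-2}$ (the Lipschitz retraction can only worsen the sharp harmonic constant); the differential inequality then yields decay exponent $1/C\le m-2$, which for $m\ge 3$ is \emph{not enough} for any H\"older exponent via Morrey's lemma. Producing a constant strictly below $\frac{1}{m-2}$---equivalently, obtaining the $\varepsilon_Q>0$---is precisely the content of Almgren's Theorem~2.12, whose proof requires the splitting and semi-retraction machinery of Sections~2.9--2.10 in \cite{Frederick J. Almgren} together with an induction on $Q$. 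You name these ingredients (``induction on $Q$'', ``blow-up/compactness'') but do not carry them out, so as written the regularity argument does not close for $m\ge 3$. Incidentally, several of your citations are misdirected: Sections~2.9--2.10 of \cite{Frederick J. Almgren} are the splitting and semi-retraction results, not the embedding $\xi$ and retraction (those are Theorems~1.2(3) and~1.3(1)); Theorem~2.12 there is the energy estimate just quoted, not a compactness theorem; and Theorem~2.16 is the codimension-one characterization, not a general H\"older result.
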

\noindent Almgren introduced the machinery of Dirichlet minimizing
multiple-valued functions in order to study the partial regularity
of mass-minimizing integral currents. He proved that any
$m$-dimensional mass-minimizing integral current is regular except
on a set of Hausdorff dimension at most $m-2$. In the present
paper, we will define and study the regularity of Dirichlet
 quasiminimizer as well as $\omega-$minimizer and almost
minimizer in the setting of multiple-valued functions.

The notion of quasiminimizer, introduced by M. Giaquinta and E.
Giusti \cite{giaqgiusti2}, embodies the notion of minimizer of
variational integrals
\begin{equation}\label{functional}
J(u,\Omega)=\int_\Omega f(x,u,\text{D}u)\;dx
\end{equation}
where $\Omega$ is an open subset of $\mathbb{R}^m$,
$u:\Omega\rightarrow\mathbb{R}^n$,
D$u=\left\{\text{D}_{\alpha}u^i\right\}$, $\alpha=1,\hdots,m$,
$i=1,\hdots,n$ and
$f(x,u,p):\Omega\times\mathbb{R}^{n}\times\mathbb{R}^{mn}\rightarrow\mathbb{R}$
, but it is substantially more general. By definition, a
quasiminimizer $u$ minimizes the functional $J$ only up to a
multiplicative constant, that is, there exists $K\geq 1$ such that
$$J(u,V)\leq K\;J(u+\phi,V)$$
 for each subdomain $V$ that is compactly contained in $\Omega$ and for
each $\phi\in C_0^{\infty}(V)$. In the scalar-valued case and
under suitable conditions on $f$, Hölder continuity can be derived
directly from the quasiminimizing property. For more details of
the statement above, as well as for more information on the
properties of quasiminimizers, we refer to \cite{Giaquintabook},
\cite{giaqgiusti2} and \cite{giusti}.\par

An alternative notion called $\omega$-minimizer is considered in a
paper by G. Anzellotti \cite{G. Anzellotti}. A function $u\in
W^{1,2}(\Omega,\mathbb{R}^n)$ is a $\omega$-minimizer, where
$\omega:(0,R_0)\rightarrow \mathbb{R}$ is a non-negative function,
for a functional $J$ of the type (\ref{functional}) if one has
$$J(u,U^m(x,r))\leq (1+\omega(r))\; J(u+\phi, U^m(x,r))$$
for any ball $U^m(x,r)\subset \Omega$ with $r<R_0$ and for all
functions $\phi\in W^{1,2}_0(U^m(x,r),\mathbb{R}^n)$. If the
functional $J$ is the Dirichlet integral and $\lim_{r\rightarrow
0}\omega(r)=0$ then Anzellotti proves that $u$ is locally Hölder
continuous. In addition the first derivatives of $u$ are locally
Hölder continuous, with exponent $\gamma$, in $\Omega$ provided
$0\leq \omega(r)\leq cr^{2\gamma}$.\par

Motivated by the question of boundary regularity of Dirichlet minimizing
multiple-valued functions (more precisely, subtracting a suitable extension
of boundary value from the original minimizer gives an almost minimizer), we also consider
$(c,\alpha)$-almost minimizer in the following sense:
$$J(u,U^m(x,r))\leq J(v,U^m(x,r))+c\;r^{m-2+\alpha}$$
for any ball $U^m(x,r)\subset \Omega$ and for all functions $v\in
W^{1,2}(U^m(x,r),\mathbb{R}^n)$ such that $u=v$ on $\partial
U^m(x,r)$. Readers are also referred to \cite{hk} for the
discussion of almost minimizer in the study of ferromagnetism in
$\mathbb{R}^3$.\par

This work is organized as follows. In Section \ref{quasireg} we
define the notion of Dirichlet  quasiminimizing multiple-valued
functions and Hölder regularity results are derived from two
Almgren's estimates. We also construct a one-dimensional
quasiminimizer with a fractal branch set. In Section \ref{Wmini}
we prove a similar result to Anzellotti's Theorem for
$\omega$-minimizing multiple-valued functions and we give an
example of $\omega-$minimizer with a unique branch point. In
Section \ref{almost} we prove a Hölder regularity theorem for
 Dirichlet almost minimizing multiple-valued functions and construct
 a one-dimensional
 almost minimizer with a fractal branch set.

\section{Preliminaries}\label{Pre}
In our terminology and notations we shall follow scrupulously
\cite{Frederick J. Almgren} and \cite{H. Federer}. Throughout the
whole text $m,n$ and $Q$ will be positive integers. We denote by
$U^m(x,r)$ and $B^m(x,r)$ the open and closed balls in
$\mathbb{R}^m$ with center $x$ and radius $r$.

\subsection{The metric space ${\bf Q}_Q(\mathbb{R}^n)$} The space of
unordered $Q$ tuples of points in $\mathbb{R}^n$ is
$${\bf
Q}_Q(\mathbb{R}^n)=\left\{\sum_{i=1}^{Q}[[u_i]]:u_1,\hdots,u_Q\in\mathbb{R}^n\right\},$$
where $[[u_i]]$ is the Dirac measure at $u_i$. This space is
 equipped with the metric $\mathcal{G}$;
 $$\mathcal{G}\left(\sum_{i=1}^{Q}[[u_i]],\sum_{i=1}^{Q}[[v_i]]\right)=\min_{\sigma}\left(\sum_{i=1}^{Q}|u_i-v_{\sigma(i)}|^2\right)^{1/2} $$
 where $\sigma$ runs through all the permutations of
 $\{1,\hdots,Q\}$. A multiple-valued function in the sense of Almgren is a ${\bf Q}_Q(\mathbb{R}^n)$-valued function.\par
 There are a bi-Lipschitz homeomorphism $\xi:{\bf
 Q}_Q(\mathbb{R}^n)\rightarrow Q^*\subset \mathbb{R}^{PQ}$, where $P$
 is a positive integer, and a Lipschitz retraction
 $\rho:\mathbb{R}^{PQ}\rightarrow Q^*$ with $\rho|_{Q^*}=$Id$_{Q^*}$ (see Theorems 1.2(3) and 1.3(1) in \cite{Frederick J. Almgren}).

 \subsection{Affine approximations} The set of all affine maps
 $\mathbb{R}^m\rightarrow \mathbb{R}^n$ is denoted by $A(m,n)$. If
 $v=(v_1,\hdots,v_n)\in A(m,n)$, we put
 $$|v|=\left(\sum_{i=1}^{n}\sum_{j=1}^m \left(\frac{\partial v_i}{\partial
 x_j}\right)^2\right)^{1/2}\in \mathbb{R}.$$
 A function $v:\mathbb{R}^m\rightarrow {\bf Q}_Q(\mathbb{R}^n)$ is
 called affine if there are $v_1,\hdots, v_Q\in A(m,n)$ such that
 $v=\sum_{i=1}^Q [[v_i]]$. Then we set $|v|=\left(\sum_{i=1}^Q
 |v_i|^2\right)^{1/2}$.\par
 If $\Omega\subset \mathbb{R}^m$ is open and $a\in \Omega$, a function
 $u:\Omega\rightarrow {\bf Q}_Q(\mathbb{R}^n)$ is said to be
 approximately affinely approximable at $a$ if there is an affine
 function $v:\mathbb{R}^m\rightarrow {\bf Q}_Q(\mathbb{R}^n)$ such
 that
 $$\text{ap}\lim_{x\rightarrow a}
 \frac{\mathcal{G}(u(x),v(x))}{|x-a|}=0.$$
Such a function $v$ is uniquely determined and denoted by ap$
Au(a)$. This is the case if $\xi\circ u$ is approximately
differentiable at $a$, and then
$$\left(\text{Lip}(\xi)\right)^{-1}|\text{ap}D(\xi\circ u)(a)|\leq
|\text{ap}Au(a)|\leq |\text{ap} D(\xi\circ u)(a)|$$ (see Theorem
1.4(3) in \cite{Frederick J. Almgren}).

\subsection{The Sobolev space $\mathcal{Y}_2\left(U,{\bf Q}_Q(\mathbb{R}^n)\right)$}
Suppose $U$ is an open ball in $\mathbb{R}^m$ or the entire space
$\mathbb{R}^m$.

\begin{enumerate}
\item We denote by $W^{1,2}(U,\mathbb{R}^n)$ the Sobolev space of
$\mathbb{R}^n$-valued functions on $U$ which together with their
first order distributional partial derivatives are $\mathcal{L}^m$
square summable over $U$.

\item A function $u\in W^{1,2}\left(U,\mathbb{R}^n\right)$ is said
to be strictly defined if $u(x)=y$ whenever $x\in U, y\in
\mathbb{R}^n$ and
$$\lim_{r\rightarrow 0} r^{-m}\int_{B^m(x,r)}|u(z)-y|\; d \mathcal{L}^m z=0 .$$
Any $v\in W^{1,2}(U,\mathbb{R}^n)$ agrees $\mathcal{L}^m$ almost
everywhere on $U$ with a strictly defined $u\in
W^{1,2}(U,\mathbb{R}^n)$. If $u\in
W^{1,2}\left(U,\mathbb{R}^n\right)$ is strictly defined, then
$$\lim_{r\rightarrow 0} r^{-m}\int_{B^m(x,r)}|u(z)-u(x)|\; d
\mathcal{L}^m z=0$$ for $\mathcal{H}^{m-1}$ almost all $x\in U$ (see
Section 4.8 in \cite{EvansGariepy}).

\item The space $\mathcal{Y}_2\left(U,{\bf
Q}_Q(\mathbb{R}^n)\right)$ consists of those functions
$u:U\rightarrow {\bf Q}_Q(\mathbb{R}^n)$ for which $\xi\circ u\in
W^{1,2}\left(U,\mathbb{R}^{PQ}\right)$. We say that $u$ is
strictly defined if $\xi\circ u$ is strictly defined. If $u,v\in
\mathcal{Y}_2\left(U,{\bf Q}_Q(\mathbb{R}^n)\right)$, we write
$u=v$ if $u(x)=v(x)$ for $\mathcal{L}^m$ almost all $x\in U$.

\item Suppose $u\in \mathcal{Y}_2\left(U,{\bf
Q}_Q(\mathbb{R}^n)\right)$. Then $u$ is approximately affinely
approximable $\mathcal{L}^m$ almost everywhere on $U$ since
$\xi\circ u$ is (see Section 6.1.3 in \cite{EvansGariepy}). The
Dirichlet integral of $u$ over $U$ is defined by
$$\text{Dir}(u;U)=\int_U |\text{ap}Au(x)|^2 \; d\mathcal{L}^m x.$$
\end{enumerate}

\subsection{The Sobolev space $\partial\mathcal{Y}_2\left(\partial U,{\bf Q}_Q(\mathbb{R}^n)\right)$}
Suppose $U\subset \mathbb{R}^m$ is an open ball.
\begin{enumerate}
\item The space $\partial\mathcal{Y}_2\left(\partial U,{\bf
Q}_Q(\mathbb{R}^n)\right)$ will be the set of those functions
$v:\partial U\rightarrow {\bf Q}_Q(\mathbb{R}^n)$ for which there
is a strictly defined $u\in \mathcal{Y}_2\left(\mathbb{R}^m,{\bf
Q}_Q(\mathbb{R}^n)\right)$ such that $u(x)=v(x)$ for
$\mathcal{H}^{m-1}$ almost all $x\in \partial U$ (cf. Theorem
A.1.2(7) and Section 2.1(2) in \cite{Frederick J. Almgren}). If
$v,w\in
\partial \mathcal{Y}_2\left(\partial U,{\bf
Q}_Q(\mathbb{R}^n)\right)$, we write $v=w$ if $v(x)=w(x)$ for
$\mathcal{H}^{m-1}$ almost all $x\in\partial U$.

\item If $v\in
\partial\mathcal{Y}_2\left(\partial U,{\bf
Q}_Q(\mathbb{R}^n)\right)$ we say that $u$ has boundary values $v$
if there is $w\in \mathcal{Y}_2\left(\mathbb{R}^m,{\bf
Q}_Q(\mathbb{R}^n)\right)$ which is strictly defined such that
$w|_U=u|_U$ and $w|_{\partial U}=v$. We then write $u|_{\partial
U}=v$. If $u$ is strictly defined, $v$ agrees with $u$
$\mathcal{H}^{m-1}$ almost everywhere on $\partial U$.

\item One says that $u:U\rightarrow {\bf Q}_Q(\mathbb{R}^n)$ is
Dirichlet minimizing if and only if $u\in
\mathcal{Y}_2\left(U,{\bf Q}_Q(\mathbb{R}^n)\right)$ and, assuming
$u$ has boundary values $v\in \partial \mathcal{Y}_2\left(\partial
U,{\bf Q}_Q(\mathbb{R}^n)\right)$, one has
$$\text{Dir}\left(u;U\right)=\inf \left\{\text{Dir}\left(w;U\right) :
w\in \mathcal{Y}_2\left(U,{\bf Q}_Q(\mathbb{R}^n)\right)\text{ has
boundary values }v \right\}.$$ If $v\in \partial
\mathcal{Y}_2\left(\partial U,{\bf Q}_Q(\mathbb{R}^n)\right)$ then
there exists a Dirichlet minimizing $u\in
\mathcal{Y}_2\left(U,{\bf Q}_Q(\mathbb{R}^n)\right)$ that has
boundary values $v$ (see Theorem 2.2(2) in \cite{Frederick J.
Almgren}).

\item For each $v\in\partial\mathcal{Y}_2(\partial U,{\bf
Q}_Q(\mathbb{R}^n))$, the Dirichlet integral of $v$ over $\partial
U$ is defined by
$$\text{dir}(v;\partial U)=\int_{\partial U} |\text{ap}Av(x)|^2 \;d\mathcal{H}^{m-1}x$$
where the affine approximation $\text{ap}Av$ is with respect to
$\partial U$.
\end{enumerate}

\section{Dirichlet quasiminimizer}\label{quasireg}
Throughout this section $K$ will be a real number larger than 1.

\begin{definition}
A strictly defined function $u\in\mathcal{Y}_2(U^{m}(0,1),{\bf
Q}_{Q}(\mathbb{R}^n))$ is a Dirichlet  $K$-quasiminimizer if for
every ball $U\subset U^{m}(0,1)$,
$$\text{Dir}(u;U)\leq K \;\text{Dir}(v;U)$$
 where $v$ is a Dirichlet
minimizing multiple-valued function having boundary values
$u|_{\partial U}\in \partial\mathcal{Y}_2\left(\partial U,{\bf
Q}_Q(\mathbb{R}^n)\right)$.
\end{definition}

\subsection{Regularity}

For the convenience of the reader, we shall recall here a few
known results that we are going to need later. We begin with a
"Dirichlet growth" theorem guaranteeing Hölder continuity:

\begin{thmmorrey}
Suppose $u\in W^{1,p}(B^m(x_0,R))$, $0<\sigma<1$ and
$$\int_{B(z,r)}|\nabla u(x)|^p\;dx\leq
C\;(r/\delta)^{m-p+p\sigma},\quad 0\leq r\leq \delta=R-|z-x_0|$$ for
every $z\in B(x_0,R)$. Then $$u\in C^{0,\sigma}(B^m(x_0,r))$$ for
each $r<R$.
\end{thmmorrey}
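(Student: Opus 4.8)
The plan is to read the hypothesis as a Morrey--Campanato growth condition and to run the classical averaging argument. Fix $r<R$ and set $\delta_0=R-r>0$. Since every $z\in B^m(x_0,r)$ satisfies $R-|z-x_0|>\delta_0$, the hypothesis is available for all balls $B^m(z,\rho)$ with $0<\rho\le\delta_0$, and the quantity $(R-|z-x_0|)^{-(m-p+p\sigma)}$ is then bounded by a constant depending only on $m,p,\sigma,R,r$. Writing $u_{z,\rho}$ for the mean value of $u$ over $B^m(z,\rho)$, the first step is to combine the Poincar\'e--Wirtinger inequality
$$\int_{B^m(z,\rho)}|u-u_{z,\rho}|^p\,dx\le c(m,p)\,\rho^p\int_{B^m(z,\rho)}|\nabla u|^p\,dx$$
with the assumed bound on $\int_{B^m(z,\rho)}|\nabla u|^p$ and to divide by $|B^m(z,\rho)|$, obtaining
$$\frac{1}{|B^m(z,\rho)|}\int_{B^m(z,\rho)}|u-u_{z,\rho}|^p\,dx\le C_1\,\rho^{p\sigma}\qquad(z\in B^m(x_0,r),\ 0<\rho\le\delta_0);$$
in other words the mean oscillation of $u$ over small balls in $B^m(x_0,r)$ decays like $\rho^{\sigma}$.

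From this mean-oscillation decay I would pass to pointwise H\"older control by the standard telescoping trick. Fixing $z$ and comparing consecutive dyadic averages, $|u_{z,\rho}-u_{z,\rho/2}|\le\bigl(2^m\,|B^m(z,\rho)|^{-1}\int_{B^m(z,\rho)}|u-u_{z,\rho}|^p\,dx\bigr)^{1/p}\le C_2\,\rho^{\sigma}$, and summing the geometric series over $\rho_k=2^{-k}\rho$ (which converges precisely because $\sigma>0$) shows that $(u_{z,\rho_k})_k$ is Cauchy. Its limit agrees with $u(z)$ at every Lebesgue point of $u$, so after discarding a null set one gets $|u(z)-u_{z,\rho}|\le C_3\,\rho^{\sigma}$ for all $z\in B^m(x_0,r)$ and $0<\rho\le\delta_0$. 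For two points $z_1,z_2$ with $t=|z_1-z_2|\le\delta_0/2$, the inclusions $B^m(z_1,t)\subset B^m(z_2,2t)\subset B^m(x_0,R)$ allow one to bound $|u_{z_1,t}-u_{z_2,2t}|$ by $C\,t^{\sigma}$ using the oscillation estimate on $B^m(z_2,2t)$; combining this with the bounds $|u(z_1)-u_{z_1,t}|\le C_3 t^{\sigma}$ and $|u(z_2)-u_{z_2,2t}|\le C_3(2t)^{\sigma}$ from the previous paragraph, the triangle inequality yields $|u(z_1)-u(z_2)|\le C_4\,t^{\sigma}$; the complementary case $t>\delta_0/2$ is handled by the $L^\infty$ bound that also follows from the oscillation estimate (take $\rho=\delta_0$). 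Hence $u\in C^{0,\sigma}(B^m(x_0,r))$.

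I do not anticipate a real obstacle, since this is Morrey's Dirichlet growth theorem and the argument is classical; the part demanding the most care is the bookkeeping --- ensuring that every ball used in the dyadic chain and in the two-point comparison remains inside $B^m(x_0,R)$ (this is the reason one first restricts to radius $r<R$ and works with $\delta_0$), keeping track of how the constants deteriorate through $\delta_0^{-(m-p+p\sigma)}$ as $r\uparrow R$, and verifying that the geometric series in the telescoping step converges exactly because the exponent $p\sigma$ is strictly positive (which is where $0<\sigma$ is used; the restriction $\sigma<1$ only ensures that the conclusion is a genuinely H\"older, rather than stronger, statement).
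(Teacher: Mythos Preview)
Your argument is correct: this is the standard Campanato approach to Morrey's Dirichlet growth theorem, and the bookkeeping you flag (keeping all balls inside $B^m(x_0,R)$ via the cushion $\delta_0=R-r$, and the convergence of the dyadic telescoping series because $\sigma>0$) is exactly what one has to watch.

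There is nothing to compare against, however: the paper does not prove this statement. It is quoted verbatim as Theorem~3.5.2 from Morrey's book \cite{Morrey} and used as a black box in the proofs of Theorems~\ref{Kregulm2} and~\ref{regulm3}. So your proposal is not a reconstruction of the paper's argument but an independent (and perfectly valid) proof of a cited external result.
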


We recall Almgren's estimates for 2-dimensional and higher
dimensional domains:

\begin{Almgren2}\label{al2}
Suppose $v\in \partial\mathcal{Y}_2\left(\partial U^2(x,r),{\bf
Q}_{Q}(\mathbb{R}^n)\right)$. Then there exists a multiple-valued
function $u\in \mathcal{Y}_2\left(U^2(x,r),{\bf
Q}_{Q}(\mathbb{R}^n)\right)$ such that
\begin{enumerate}
\item $u$ has boundary values $v$, \item Dir$\left(u;
U^2(x,r)\right)\leq Qr\;\text{dir}\left(v;\partial U^2(x,r)\right)$.
\end{enumerate}
\end{Almgren2}

\begin{Almgrenm}
Let $m\geq 3$ be an integer. If $f\in \mathcal{Y}_2(U^m(x,r),{\bf
Q}_{Q}(\mathbb{R}^{n}))$ is strictly defined and Dirichlet
minimizing then
$$\text{Dir}(f;U^m(x,r))\leq\left(\frac{1-\varepsilon_Q}{m-2}\right)r\;\text{dir}(f;\partial
U^{m}(x,r))$$ where $0<\varepsilon_Q<1$ is a constant.
\end{Almgrenm}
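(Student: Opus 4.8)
The plan is to reduce the statement, by the dilation $y\mapsto x+ry$ together with the isometric translations and the homogeneous dilations of ${\bf Q}_Q(\mathbb{R}^n)$ (all of which rescale $\text{Dir}$ and $\text{dir}$ compatibly and preserve the Dirichlet minimizing property), to the following normalized assertion on $U:=U^m(0,1)$: there is a constant $\varepsilon_Q\in(0,1)$ such that every strictly defined Dirichlet minimizing $f\in\mathcal{Y}_2(U,{\bf Q}_Q(\mathbb{R}^n))$ with $\text{dir}(f|_{\partial U};\partial U)=1$ satisfies $\text{Dir}(f;U)\le\frac{1-\varepsilon_Q}{m-2}$; the case $\text{dir}(f|_{\partial U};\partial U)=0$ forces $\xi\circ f$ to be constant (as $\partial U$ is connected), so the inequality is trivial, and the case $\text{dir}(f|_{\partial U};\partial U)=+\infty$ makes it vacuous. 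Since a Dirichlet minimizer realizes the infimum of the Dirichlet integral among all functions sharing its boundary values, it suffices to prove that
\[
S:=\sup\bigl\{\text{Dir}(g;U):g\ \text{Dirichlet minimizing},\ \text{dir}(g|_{\partial U};\partial U)=1\bigr\}<\tfrac{1}{m-2},
\]
after which one sets $\varepsilon_Q:=1-(m-2)S$. That $S\le\frac{1}{m-2}$ is clear: writing $v=g|_{\partial U}$, the $0$-homogeneous competitor $y\mapsto v(y/|y|)$ belongs to $\mathcal{Y}_2(U,{\bf Q}_Q(\mathbb{R}^n))$ exactly because $m\ge3$, and in polar coordinates its Dirichlet integral equals $\bigl(\int_0^1t^{m-3}\,dt\bigr)\,\text{dir}(v;\partial U)=\frac{1}{m-2}$; also $S>0$ since $\text{Dir}(g;U)>0$ for nonconstant minimizers, which will give $\varepsilon_Q<1$.

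To establish $S<\frac{1}{m-2}$ I argue by contradiction, assuming $S=\frac{1}{m-2}$. Using Almgren's existence theorem, choose strictly defined Dirichlet minimizers $f_j$ with $v_j:=f_j|_{\partial U}$ satisfying $\text{dir}(v_j;\partial U)=1$ and $\text{Dir}(f_j;U)\to\frac{1}{m-2}$. After isometric translations of ${\bf Q}_Q(\mathbb{R}^n)$ we may assume the $\xi\circ f_j$ are, say, of mean zero, so $\{\xi\circ f_j\}$ is bounded in $W^{1,2}(U,\mathbb{R}^{PQ})$; passing to a subsequence, $\xi\circ f_j\rightharpoonup h$ weakly in $W^{1,2}(U)$ and strongly in $L^2(U)$, the traces $\xi\circ v_j$ converge strongly in $L^2(\partial U)$ (compactness of the trace operator), and $h$ is valued in the closed set $Q^*$, hence $h=\xi\circ f$ for a strictly defined $f\in\mathcal{Y}_2(U,{\bf Q}_Q(\mathbb{R}^n))$ with boundary values $v$, where $\xi\circ v$ is the $L^2(\partial U)$-limit of the $\xi\circ v_j$. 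The key point is to show $f$ is Dirichlet minimizing with $\text{Dir}(f;U)=\frac{1}{m-2}$. Lower semicontinuity of the Dirichlet integral gives $\text{Dir}(f;U)\le\liminf_j\text{Dir}(f_j;U)=\frac{1}{m-2}$. Conversely, every competitor $w$ for $v$ obeys $\text{Dir}(w;U)\ge\frac{1}{m-2}$: transplant $w$ into a competitor $w_j$ for $v_j$ by setting $w_j(y):=w\bigl(y/(1-\delta_j)\bigr)$ on $U^m(0,1-\delta_j)$ and, on the annulus $U\setminus \overline{U^m(0,1-\delta_j)}$, letting $\xi\circ w_j$ be the Lipschitz retraction $\rho$ applied to the radial affine interpolation between $\xi\circ v$ and $\xi\circ v_j$ (so $w_j$ is genuinely $Q^*$-valued and matches across the interface sphere). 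A direct computation bounds the energy of the annular piece by $C\bigl(\delta_j^{-1}\|\xi\circ v_j-\xi\circ v\|_{L^2(\partial U)}^2+\delta_j\bigr)$, which tends to $0$ once $\delta_j\to0$ is chosen to decay slowly relative to $\|\xi\circ v_j-\xi\circ v\|_{L^2(\partial U)}$, while the energy on $U^m(0,1-\delta_j)$ equals $(1-\delta_j)^{m-2}\,\text{Dir}(w;U)$; hence $\frac{1}{m-2}=\lim_j\text{Dir}(f_j;U)\le\lim_j\text{Dir}(w_j;U)=\text{Dir}(w;U)$. Taking $w=f$ and combining with the semicontinuity bound gives $\text{Dir}(f;U)=\frac{1}{m-2}$, and since $\text{Dir}(w;U)\ge\frac{1}{m-2}=\text{Dir}(f;U)$ for every competitor $w$, $f$ is Dirichlet minimizing.

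To reach the contradiction, lower semicontinuity of the boundary Dirichlet integral gives $\text{dir}(v;\partial U)\le\liminf_j\text{dir}(v_j;\partial U)=1$, whereas the $0$-homogeneous competitor forces $\frac{1}{m-2}=\text{Dir}(f;U)\le\frac{1}{m-2}\text{dir}(v;\partial U)$; thus $\text{dir}(v;\partial U)=1$ and $v$ is nonconstant. But then the $0$-homogeneous extension $g(y):=v(y/|y|)$ satisfies $\text{Dir}(g;U)=\frac{1}{m-2}=\text{Dir}(f;U)$, so $g$ realizes the minimum and is itself Dirichlet minimizing; by Almgren's interior H\"older continuity of Dirichlet minimizing multiple-valued functions, $g$ would then agree $\mathcal{L}^m$-almost everywhere with a function continuous at the origin --- impossible for a nonconstant $0$-homogeneous function, since such continuity would force $v$ to be constant $\mathcal{H}^{m-1}$-almost everywhere on $\partial U$. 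This contradiction proves $S<\frac{1}{m-2}$, and undoing the reductions proves the theorem. The one genuinely delicate step is the transplantation: one must keep the comparison map valued in $Q^*$ (which is exactly the role of the retraction $\rho$) and estimate the energy of the thin transition annulus sharply enough that it disappears in the limit; the remaining ingredients --- scaling, weak $W^{1,2}$-compactness, lower semicontinuity of $\text{Dir}$ and $\text{dir}$, and Almgren's existence and interior regularity theorems --- are all available from the literature quoted above.
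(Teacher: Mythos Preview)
The paper does not prove this statement: it is quoted verbatim as ``Theorem 2.12 in \cite{Frederick J. Almgren}'' and used as a black box (for instance in the proof of Theorem~\ref{regulm3}). So there is no proof in the paper to compare against; what I can do is assess your argument on its own merits.

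Your compactness--contradiction scheme is natural, but the final step is circular. You derive the contradiction by invoking ``Almgren's interior H\"older continuity of Dirichlet minimizing multiple-valued functions'' to rule out a nonconstant $0$-homogeneous minimizer. In Almgren's development --- and in this paper, see the proof of Theorem~\ref{regulm3} --- that interior H\"older continuity for $m\ge 3$ is obtained \emph{from} the very estimate you are trying to prove: one uses $\text{Dir}(f;U^m(z,r))\le\frac{1-\varepsilon_Q}{m-2}\,r\,\text{dir}(f;\partial U^m(z,r))$ together with $\phi'(r)\ge\text{dir}(f;\partial U^m(z,r))$ to get the decay $\text{Dir}(f;U^m(z,sr))\le s^{m-2+2\sigma}\text{Dir}(f;U^m(z,r))$ and then Morrey's lemma. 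Without $\varepsilon_Q>0$ one only gets $s^{m-2}$ decay, which is exactly the borderline realized by a $0$-homogeneous map and gives no continuity. So your appeal to regularity presupposes the conclusion. Almgren's own proof of Theorem~2.12 is not a limiting argument at all: it is a direct construction of a competitor (based on the splitting of \S2.9--2.10 and a Poincar\'e-type gain) that strictly beats the cone extension, producing $\varepsilon_Q$ explicitly; this is the missing idea.

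A secondary gap: to get lower semicontinuity of $\text{dir}(\cdot;\partial U)$ along your sequence you need weak $W^{1,2}(\partial U)$ convergence of the $\xi\circ v_j$, not merely the strong $L^2(\partial U)$ convergence of traces that you established. This is easy to fix (the $\text{dir}(v_j;\partial U)=1$ bound gives a $W^{1,2}(\partial U)$ bound after normalizing the mean), and you also need $\text{dir}(v;\partial U)<\infty$ \emph{before} the transplantation step, since the tangential part of the annular energy involves $D(\xi\circ v)$; but these are cosmetic compared with the circularity above.
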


\begin{thm}\label{Kregulm2}
Suppose that $u\in \mathcal{Y}_2\left(U^2(0,1),{\bf
Q}_{Q}(\mathbb{R}^n)\right)$ is a strictly defined Dirichlet
 $K$-quasiminimizer such that
$\text{Dir}\left(u;U^2(0,1)\right)>0$. Then the following assertions
are checked:
\begin{enumerate}
\item Suppose $z\in U^{2}(0,1)$. Then for $\mathcal{L}^{1}$ almost
all $0<r< 1-|z|$,
$$\text{Dir}(u;U^2(z,r))\leq KQr\;
\frac{d}{ds}\;\text{Dir}(u;U^2(z,s))|_{s=r}.$$ \item For each $z\in
U^2(0,1)$, $0<r<1-|z|$, and $0<s\leq 1$,
$$\text{Dir}(u;U^{2}(z,sr))\leq
s^{\frac{1}{KQ}}\;\text{Dir}(u;U^{2}(z,r)).$$ \item
$f|_{B^2(0,\delta)}$ is Hölder continuous with exponent
$\frac{1}{2KQ}$ for all $0<\delta<1$.
\end{enumerate}
\end{thm}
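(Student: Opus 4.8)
The plan is to run the classical Morrey growth argument, in the form Almgren uses for Dirichlet minimizers, adapted to the quasiminimizing hypothesis. For a fixed $z\in U^2(0,1)$ write $g(r)=\text{Dir}(u;U^2(z,r))$ on $(0,1-|z|)$. Expanding the Dirichlet integral in polar coordinates centred at $z$ (coarea formula) gives $g(r)=\int_0^r h(s)\,ds$ with $h(s)=\int_{\partial U^2(z,s)}|\text{ap}Au|^2\,d\mathcal{H}^1$; since $\text{Dir}(u;U^2(0,1))<\infty$, the function $h$ is locally integrable, so $g$ is locally absolutely continuous with $g'(s)=h(s)$ for $\mathcal{L}^1$ almost every $s$. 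A slicing argument as in \cite{Frederick J. Almgren} shows moreover that for $\mathcal{L}^1$ almost every $s$ the restriction $u|_{\partial U^2(z,s)}$ belongs to $\partial\mathcal{Y}_2(\partial U^2(z,s),{\bf Q}_Q(\mathbb{R}^n))$ and, since restricting an affine ${\bf Q}_Q(\mathbb{R}^n)$-valued map to a line does not increase its norm, $\text{dir}(u|_{\partial U^2(z,s)};\partial U^2(z,s))\leq h(s)$.

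For such $s$, let $w$ be a Dirichlet minimizing function with boundary values $u|_{\partial U^2(z,s)}$ (Theorem 2.2(2) in \cite{Frederick J. Almgren}). Combining the $K$-quasiminimality of $u$ with the fact that $w$, being minimizing, has energy no greater than the competitor furnished by Section 2.7(2) in \cite{Frederick J. Almgren}, we obtain
$$g(s)=\text{Dir}(u;U^2(z,s))\leq K\,\text{Dir}(w;U^2(z,s))\leq KQs\,\text{dir}(u|_{\partial U^2(z,s)};\partial U^2(z,s))\leq KQs\,g'(s),$$
which is assertion (1). For assertion (2), if $\text{Dir}(u;U^2(z,sr))=0$ there is nothing to prove; otherwise $g>0$ on $[sr,r]$ by monotonicity, so $\log g$ is absolutely continuous there, and dividing (1) by $KQs\,g(s)$ gives $(\log g)'(s)\geq\frac{1}{KQ}\frac{1}{s}$ a.e.\ on $[sr,r]$. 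Integrating from $sr$ to $r$ gives $\log g(r)-\log g(sr)\geq\frac{1}{KQ}\log\frac1s$, i.e.\ $\text{Dir}(u;U^2(z,sr))\leq s^{1/(KQ)}\,\text{Dir}(u;U^2(z,r))$.

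For assertion (3) (where $f$ should read $u$), fix $0<\delta<1$, pick $\delta<\delta'<1$ and set $\eta=\delta'-\delta$. For $z\in B^2(0,\delta)$ and $0<\rho<\eta$ one has $U^2(z,\rho)\subset U^2(z,\eta)\subset U^2(0,1)$, so (2) gives $\text{Dir}(u;U^2(z,\rho))\leq(\rho/\eta)^{1/(KQ)}\,\text{Dir}(u;U^2(0,1))$, while for $\rho\geq\eta$ the left side is bounded by the fixed number $\text{Dir}(u;U^2(0,1))$. Using $|\text{ap}D(\xi\circ u)|\leq\text{Lip}(\xi)\,|\text{ap}Au|$, this yields constants $C$ and $R$ with $\int_{B^2(z,r)}|\text{ap}D(\xi\circ u)|^2\,d\mathcal{L}^2\leq C\,(r/(R-|z|))^{1/(KQ)}$ for all $z$ near $B^2(0,\delta)$ and $0\leq r\leq R-|z|$. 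Since $m=p=2$, this is exactly the Dirichlet growth hypothesis of Theorem 3.5.2 in \cite{Morrey} with $\sigma=\frac{1}{2KQ}$, so $\xi\circ u\in C^{0,1/(2KQ)}$ on $B^2(0,\delta)$; as $\mathcal{G}(u(x),u(y))\leq\text{Lip}(\xi^{-1})\,|\xi(u(x))-\xi(u(y))|$, the same exponent holds for $u$ itself.

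The main obstacle is the careful justification of the two slicing facts invoked above: that $\frac{d}{dr}\text{Dir}(u;U^2(z,r))$ coincides $\mathcal{L}^1$-a.e.\ with the boundary integral $\int_{\partial U^2(z,r)}|\text{ap}Au|^2\,d\mathcal{H}^1$, and that for $\mathcal{L}^1$-a.e.\ $r$ the restriction $u|_{\partial U^2(z,r)}$ is a legitimate element of $\partial\mathcal{Y}_2$ whose boundary Dirichlet integral does not exceed that boundary integral. Both follow, after composing with $\xi$, from standard Fubini/coarea statements for $W^{1,2}$ functions together with the compatibility of Sobolev traces on circles, exactly the kind of slicing carried out in \cite{Frederick J. Almgren}; once they are granted, the rest is the elementary ODE comparison above and the two quoted theorems.
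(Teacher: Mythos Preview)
Your proof is correct and follows essentially the same route as the paper: use the $K$-quasiminimality together with Almgren's cone-type competitor estimate (Section 2.7(2)) and the coarea/slicing identity $\phi'(r)=\int_{\partial U^2(z,r)}|\text{ap}Au|^2\,d\mathcal{H}^1\geq\text{dir}(u;\partial U^2(z,r))$ to obtain the differential inequality (1), integrate to get the decay estimate (2), then transfer to $\xi\circ u$ and invoke Morrey's growth theorem for (3). Your write-up is in fact more explicit than the paper's on the absolute continuity of $g$, the $g=0$ case in the integration step, and the verification of Morrey's hypothesis, but no new ideas are involved on either side.
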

\begin{proof}
Let $v\in \mathcal{Y}_2(U^2(z,r),{\bf Q}_{Q}(\mathbb{R}^n))$ be a
Dirichlet minimizing multiple-valued function with boundary values
$u|_{\partial U^2(z,r)}$. It follows from Section 2.7(2) in
\cite{Frederick J. Almgren} that
$$\text{Dir}(u;U^2(z,r))\leq K\;\text{Dir}(v;U^{2}(z,r))\leq KQr\;\text{dir}(u;\partial
U^2(z,r)).$$ Using the fact that the function
$\phi(r)=$Dir$(u;U^2(z,r))$ is absolutely continuous with
$$\phi'(r)= \int_{\partial U^2(z,r)}|\text{ap}Au(x)|^2\;d\mathcal{H}^{1}x \geq \text{dir}(u;\partial U^2(z,r))$$
for $\mathcal{L}^1$ almost all $0<r<1-|z|$, we obtain the first
assertion. One derives (2) from (1) by integration.  In order to
verify (3), one notes that conclusion (2) implies for each $z\in
U^2(0,1),$ $0<r<1-|z|$, and $0<s\leq 1$,
\begin{eqnarray*}
\text{Dir}(\xi\circ u;U^2(z,sr))& \leq &
\text{Lip}(\xi)^2\;\text{Dir}(u;U^2(z,sr))\\
 & \leq & \text{Lip}(\xi)^2\;
 s^{\frac{1}{KQ}}\;\text{Dir}(u;U^2(z,r))\\
  & \leq & \text{Lip}(\xi)^2\; s^{\frac{1}{KQ}}\;\text{Dir}(\xi\circ
 u;U^2(z,r))\\
 & \leq & \text{Lip}(\xi)^2\;\text{Dir}(\xi\circ
 u;U^2(0,1))\;s^{\frac{1}{KQ}}.
\end{eqnarray*}
Let us fix $0<\delta<1$. Theorem 3.5.2 in \cite{Morrey} applied to
the function $\xi\circ u$ implies that $$\xi\circ u\in
C^{0,\frac{1}{2KQ}}(B^2(0,\delta))$$ and the third assertion is
checked since $\xi$ is bi-Lipschitz.
\end{proof}

The regularity result is weaker for higher dimensional domains since
we need an extra assumption (\ref{small}) on the size of $K$. This
assumption on $K$ is necessary since M. Giaquinta presents in
\cite{Giaquintabook} a single-valued quasiminimizer for the
Dirichlet integral
 which is singular on a dense set.

\begin{thm}\label{regulm3}
Let $m\geq 3$ be an integer. Suppose that $u\in
\mathcal{Y}_2(U^m(0,1),{\bf Q}_{Q}(\mathbb{R}^n))$ is a strictly
defined Dirichlet  $K$-quasiminimizer such that
$\text{Dir}(u;U^m(0,1))>0$. If
\begin{equation}\label{small}
1\leq K<\frac{1}{(1-\varepsilon_Q)}
\end{equation} then there exists a constant
$0<\sigma=\sigma\left(K,\varepsilon_Q,m\right)<1$ such that the
following assertions are checked:

\begin{enumerate}
\item Suppose $z\in U^{m}(0,1)$. Then for $\mathcal{L}^{1}$ almost
all $0<r< 1-|z|$,
$$\text{Dir}(u;U^m(z,r))\leq \frac{1}{(m-2+2\sigma)}\; r\;
\frac{d}{ds}\;\text{Dir}(u;U^m(z,s))|_{s=r}.$$
\item For each $z\in
U^m(0,1)$, $0<r<1-|z|$, and $0<s\leq 1$,
$$\text{Dir}(u;U^{m}(z,sr))\leq
s^{m-2+2\sigma}\;\text{Dir}(u;U^{m}(z,r)).$$
\item
$f|_{B^m(0,\delta)}$ is Hölder continuous with exponent $\sigma$ for
all $0<\delta<1$.
\end{enumerate}
\end{thm}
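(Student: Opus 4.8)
The plan is to run the scheme of the proof of Theorem \ref{Kregulm2}, with the two-dimensional bound of Section 2.7(2) in \cite{Frederick J. Almgren} replaced by its higher-dimensional counterpart, Theorem 2.12 in \cite{Frederick J. Almgren}, and with hypothesis (\ref{small}) used to produce the H\"older exponent. For assertion (1), fix $z\in U^m(0,1)$ and $0<r<1-|z|$ and let $v\in\mathcal{Y}_2(U^m(z,r),{\bf Q}_Q(\mathbb{R}^n))$ be a strictly defined Dirichlet minimizing multiple-valued function having boundary values $u|_{\partial U^m(z,r)}$, which exists by Theorem 2.2(2) in \cite{Frederick J. Almgren}. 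Combining the $K$-quasiminimizing property of $u$, Theorem 2.12 in \cite{Frederick J. Almgren} applied to $v$, and the equality $\text{dir}(v;\partial U^m(z,r))=\text{dir}(u;\partial U^m(z,r))$ (valid since $v$ and $u$ have the same boundary values), one obtains
\[
\text{Dir}(u;U^m(z,r))\leq K\,\text{Dir}(v;U^m(z,r))\leq K\,\frac{1-\varepsilon_Q}{m-2}\,r\,\text{dir}(u;\partial U^m(z,r)).
\]
Exactly as in the proof of Theorem \ref{Kregulm2}, the function $\phi(r)=\text{Dir}(u;U^m(z,r))$ is absolutely continuous with $\phi'(r)\geq\text{dir}(u;\partial U^m(z,r))$ for $\mathcal{L}^1$ almost all $0<r<1-|z|$. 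Now (\ref{small}) enters: since $K(1-\varepsilon_Q)<1$ we have $K\frac{1-\varepsilon_Q}{m-2}<\frac{1}{m-2}$, so we may fix $\sigma\in(0,1)$ with $m-2+2\sigma\leq\frac{m-2}{K(1-\varepsilon_Q)}$, for instance $\sigma=\min\{\tfrac12,\ \tfrac{m-2}{2}(\tfrac{1}{K(1-\varepsilon_Q)}-1)\}$, which depends only on $K$, $\varepsilon_Q$ and $m$; the inequality above then reads $\phi(r)\leq\frac{1}{m-2+2\sigma}\,r\,\phi'(r)$, which is (1).

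Assertion (2) follows from (1) by integration as in the proof of Theorem \ref{Kregulm2}: on any interval where $\phi>0$ one has $\tfrac{d}{ds}\log\phi(s)\geq(m-2+2\sigma)/s$ for a.e. $s$, and integrating from $sr$ to $r$ yields $\phi(sr)\leq s^{m-2+2\sigma}\phi(r)$, the case $\phi(sr)=0$ being trivial since $\phi$ is nondecreasing. For assertion (3) I would transfer the estimate to $\xi\circ u\in W^{1,2}(U^m(0,1),\mathbb{R}^{PQ})$ through the inequalities $(\text{Lip}\,\xi)^{-1}|\text{ap}D(\xi\circ u)(a)|\leq|\text{ap}Au(a)|\leq|\text{ap}D(\xi\circ u)(a)|$ exactly as in the proof of Theorem \ref{Kregulm2}, which gives, for every $z\in U^m(0,1)$ and $0\leq\rho\leq\delta:=1-|z|$,
\[
\int_{B^m(z,\rho)}|\nabla(\xi\circ u)(x)|^2\,d\mathcal{L}^m x\leq\text{Lip}(\xi)^2\,\text{Dir}(\xi\circ u;U^m(0,1))\,(\rho/\delta)^{m-2+2\sigma}.
\]
Since $m-2+2\sigma=m-p+p\sigma$ for $p=2$, Theorem 3.5.2 in \cite{Morrey} applied to $\xi\circ u$ yields $\xi\circ u\in C^{0,\sigma}(B^m(0,\delta))$ for each $0<\delta<1$, and (3) follows because $\xi$ is bi-Lipschitz.

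The only genuinely new ingredient beyond the two-dimensional Theorem \ref{Kregulm2} is Theorem 2.12 in \cite{Frederick J. Almgren}, together with the elementary bookkeeping converting the constant $K\frac{1-\varepsilon_Q}{m-2}$ into an admissible exponent $m-2+2\sigma$. I therefore expect the only delicate point to be verifying that $\sigma$ can be chosen in $(0,1)$: this is exactly where (\ref{small}) is indispensable, since without it the quantity $\tfrac{m-2}{2}(\tfrac{1}{K(1-\varepsilon_Q)}-1)$ is non-positive and no admissible $\sigma$ exists, in accordance with the example in \cite{Giaquintabook} of a Dirichlet quasiminimizer singular on a dense set.
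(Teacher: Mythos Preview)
Your proof is correct and follows essentially the same approach as the paper: both replace the two-dimensional estimate by Theorem 2.12 in \cite{Frederick J. Almgren}, use (\ref{small}) to choose $\sigma\in(0,1)$ with $m-2+2\sigma\le\frac{m-2}{K(1-\varepsilon_Q)}$, derive the differential inequality $\phi(r)\le\frac{1}{m-2+2\sigma}\,r\,\phi'(r)$, integrate to get (2), and transfer to $\xi\circ u$ to apply Morrey's growth theorem. The only cosmetic difference is that you give an explicit formula for $\sigma$, whereas the paper simply asserts its existence via the strict inequality.
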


\begin{proof}
We fix  $0<\sigma<1$ requiring that
$$\frac{m-2}{K(1-\varepsilon_Q)}>m-2+2\sigma.$$
Let $v\in \mathcal{Y}_2(U^m(z,r),{\bf Q}_{Q}(\mathbb{R}^n))$ be a
Dirichlet minimizing multiple-valued function with boundary values
$u|_{\partial U^m(z,r)}$. It follows from Theorem 2.12 in
\cite{Frederick J. Almgren} that
\begin{eqnarray*}
\text{Dir}(u;U^m(z,r)) & \leq & K\;\text{Dir}(v;U^{m}(z,r))\\
&  \leq &
K\left(\frac{1-\varepsilon_Q}{m-2}\right)r\;\text{dir}(v;\partial
U^{m}(z,r))\\
 & \leq & \frac{1}{(m-2+2\sigma)}\; r\;\text{dir}(u;\partial
U^{m}(z,r)).
\end{eqnarray*}
Using the fact that the function $\phi(r)=$Dir$(u;U^m(z,r))$ is
absolutely continuous with
$$\phi'(r)=\int_{\partial U^m(z,r)}|\text{ap}Au(x)|^2\;d\mathcal{H}^{m-1}x \geq \text{dir}(u;\partial U^m(z,r))$$
for $\mathcal{L}^1$ almost all $0<r<1-|z|$, we obtain the first
claim. One derives (2) from (1) by integration. In order to
conclude, one notes that the second claim implies for each $z\in
U^m(0,1)$, $0<r<1-|z|$, and $0<s \leq 1$,
\begin{eqnarray*}
\text{Dir}(\xi\circ u;U^m(z,sr))& \leq &
\text{Lip}(\xi)^2\;\text{Dir}(u;U^m(z,sr))\\
 & \leq & \text{Lip}(\xi)^2\;
s^{m-2+2\sigma}\;\text{Dir}(u;U^m(z,r))\\
  & \leq & \text{Lip}(\xi)^2\; s^{m-2+2\sigma}\;\text{Dir}(\xi\circ
 u;U^m(z,r))\\
 & \leq &  \text{Lip}(\xi)^2\;\text{Dir}(\xi\circ
 u;U^m(0,1))\;s^{m-2+2\sigma}.
\end{eqnarray*}
Let us fix $0<\delta<1$. Theorem 3.5.2 in \cite{Morrey} applied to
the function $\xi\circ u$ implies that $$\xi\circ u\in C^{0,\sigma
}(B^m(0,\delta))$$ and the third assertion is checked since $\xi$ is
bi-Lipschitz.
\end{proof}

\subsection{Branch set of quasiminimizer}\label{quasiex}

\begin{definition}
Assume the multiple-valued function $u:U^m(0,1)\rightarrow {\bf
Q}_{Q}(\mathbb{R}^{n})$ is continuous. Define the function
$\sigma:U^m(0,1)\rightarrow\{1,\hdots,Q\}$ as
$$\sigma(x)=\text{card }(\text{spt}\;(u(x)))$$
for all $x\in U^m(0,1)$. Then, the closed set
$$\Sigma=\{x\in U^m(0,1)\;|\;\sigma\text{ is discontinuous at }x\}$$
is called the {\it branch set} of the continuous multiple-valued
function $u$.
\end{definition}

Almgren proved that the Hausdorff dimension of the branch set of a
strictly defined and Dirichlet minimizing $u\in
\mathcal{Y}_2(U^m(0,1),{\bf Q}_{Q}(\mathbb{R}^n))$ does not exceed
$m-2$ (see Theorem 2.14 in \cite{Frederick J. Almgren}). Here we
will construct a one-dimensional Dirichlet quasiminimizer with a
fractal branch set. This construction can be adapted in order to
obtain a one-dimensional Dirichlet quasiminimizer with a branch
set of positive $1$-dimensional Lebesgue measure.\par
The
following result characterizes Dirichlet minimizing
multiple-valued functions in codimension one.

\begin{Dir1}\label{codimension1}
Suppose $v\in \partial\mathcal{Y}_2(\partial U^m(0,1),{\bf
Q}_{Q}(\mathbb{R}))$ and $v_{1},\hdots,v_Q:\partial
U^m(0,1)\rightarrow \mathbb{R}$ are defined by requiring for each
$x\in \partial U^m(0,1)$ that $v(x)=\sum_{i=1}^{Q}[[ v_i(x)]]$ and
$v_1(x)\leq v_2(x)\leq \hdots\leq v_Q(x)$. Then $v_i\in
\partial\mathcal{Y}_2(\partial U^m(0,1),\mathbb{R})$ for each
$i=1,\hdots,Q$ and $u\in\mathcal{Y}_2(U^m(0,1),{\bf
Q}_{Q}(\mathbb{R}))$ is strictly defined and Dirichlet minimizing
with boundary values $v$ if and only if $u=\sum_{i=1}^Q[[ u_i]] $
corresponding to $u_1, u_2,\hdots, u_Q\in
\mathcal{Y}_2(U^m(0,1),\mathbb{R})$ which are the unique harmonic
functions having boundary values $v_1,v_2,\hdots, v_Q$
respectively.
\end{Dir1}

This last theorem implies the following characterization of
Dirichlet  quasiminimizers in dimension and codimension one.

\begin{thm}\label{Kmindim1}
A multiple-valued function $u\in\mathcal{Y}_2((-1,1),{\bf
Q}_{Q}(\mathbb{R}))$ is a Dirichlet  K-quasiminimizer if and only
if for every interval $(a,b) \subset (-1,1)$,
$$\text{Dir}(u,(a,b))\leq K\; \frac{\mathcal{G}^2(u(b),u(a))}{b-a}.$$
\end{thm}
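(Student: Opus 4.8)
The plan is to reduce everything to the one-dimensional harmonic (i.e.\ affine) picture supplied by Theorem 2.16 in \cite{Frederick J. Almgren}. First I would fix an interval $(a,b)\subset(-1,1)$ and let $v\in\mathcal{Y}_2((a,b),{\bf Q}_Q(\mathbb{R}))$ be the Dirichlet minimizing function with boundary values $u|_{\{a,b\}}$. By Theorem 2.16 in \cite{Frederick J. Almgren} (applied with $m=1$), if $u|_{\partial(a,b)}=\sum_{i=1}^Q[[w_i]]$ with $w_1(a)\le\cdots\le w_Q(a)$ at the left endpoint and correspondingly at $b$, then $v=\sum_{i=1}^Q[[v_i]]$ where each $v_i$ is the affine function on $(a,b)$ interpolating the (ordered) boundary data; in one dimension "harmonic" just means affine. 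Hence
$$\text{Dir}(v;(a,b))=\sum_{i=1}^Q\int_a^b (v_i')^2\,dt=\sum_{i=1}^Q\frac{(v_i(b)-v_i(a))^2}{b-a}=\frac{1}{b-a}\sum_{i=1}^Q\bigl(v_i(b)-v_i(a)\bigr)^2.$$
The key elementary observation is that, because the $v_i(a)$ and the $v_i(b)$ are each listed in increasing order, the sum $\sum_i(v_i(b)-v_i(a))^2$ is exactly the minimum over permutations, i.e.\ it equals $\mathcal{G}^2(u(b),u(a))$; this is the standard fact that matching two ordered lists monotonically minimizes the sum of squared differences (a rearrangement/exchange argument: swapping an inversion never increases the sum). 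Therefore $\text{Dir}(v;(a,b))=\mathcal{G}^2(u(b),u(a))/(b-a)$.

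Granting that, the equivalence is immediate. If $u$ is a $K$-quasiminimizer then by definition $\text{Dir}(u;(a,b))\le K\,\text{Dir}(v;(a,b))=K\,\mathcal{G}^2(u(b),u(a))/(b-a)$, which is the asserted inequality. Conversely, suppose the inequality $\text{Dir}(u;(a,b))\le K\,\mathcal{G}^2(u(b),u(a))/(b-a)$ holds for every subinterval. Given any ball (interval) $U=(a,b)\subset(-1,1)$ and the Dirichlet minimizer $v$ on $U$ with boundary data $u|_{\partial U}$, the computation above gives $\text{Dir}(v;U)=\mathcal{G}^2(u(b),u(a))/(b-a)$, so $\text{Dir}(u;U)\le K\,\text{Dir}(v;U)$, i.e.\ $u$ is a $K$-quasiminimizer. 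One small point to address: the definition of quasiminimizer quantifies over balls $U\subset U^1(0,1)=(-1,1)$, and in $\mathbb{R}^1$ every such ball is an open interval $(a,b)$, and conversely; also $u\in\mathcal{Y}_2((-1,1),{\bf Q}_Q(\mathbb{R}))$ is (after passing to its strictly defined representative, which is continuous in one dimension by the Sobolev embedding) well defined pointwise at $a$ and $b$, so $\mathcal{G}^2(u(b),u(a))$ makes sense.

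The main obstacle is the rearrangement lemma identifying $\sum_{i=1}^Q(v_i(b)-v_i(a))^2$ with $\mathcal{G}^2(u(b),u(a))$: one must be slightly careful because the ordering that makes $v$ the minimizer is the ordering of the boundary values as elements of $\mathbb{R}$, and this needs to be reconciled with the permutation-minimization in the definition of $\mathcal{G}$. The clean way is: for two $Q$-tuples $(p_i)$ and $(q_i)$ of reals sorted increasingly, $\min_\sigma\sum_i(p_i-q_{\sigma(i)})^2$ is attained at $\sigma=\mathrm{id}$; expanding the square, this is equivalent to maximizing $\sum_i p_i q_{\sigma(i)}$, which is the discrete Chebyshev/rearrangement inequality, proved by noting any inversion $p_i<p_j$, $q_{\sigma(i)}>q_{\sigma(j)}$ can be removed without decreasing the inner-product sum. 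Everything else is bookkeeping, so the proof is short once this lemma is in hand.
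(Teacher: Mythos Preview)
Your proof is correct and follows essentially the same route as the paper: both invoke Theorem~2.16 in \cite{Frederick J. Almgren} to identify the Dirichlet minimizer on $(a,b)$ as the sum of affine interpolants of the ordered boundary values, and then compute its Dirichlet integral as $\mathcal{G}^2(u(b),u(a))/(b-a)$. You are in fact more careful than the paper, since you spell out the rearrangement argument showing $\sum_i (v_i(b)-v_i(a))^2=\mathcal{G}^2(u(b),u(a))$ and make the two directions of the equivalence explicit, whereas the paper simply asserts the final equality.
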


\begin{proof}
Let $(a,b)\subset (-1,1)$. We define the functions
$v_{1},\hdots,v_Q:\partial (a,b)\rightarrow \mathbb{R}$ by requiring
for each $x\in \partial (a,b)$ that $u(x)=\sum_{i=1}^{Q}[[ v_i(x)]]$
and $v_1(x)\leq v_2(x)\leq \hdots\leq v_Q(x)$. By Theorem 2.16 in
\cite{Frederick J. Almgren}, the Dirichlet minimizing multiple-valued
function coinciding with $u$ on $\partial (a,b)$ is given by
$$
v(\cdot)=\sum_{i=1}^{Q}\left[\left[
\frac{v_{i}(b)-v_{i}(a)}{b-a}(\cdot -a)+v_{i}(a)\right]\right]$$ and
\begin{multline*}
\text{Dir}(v;[a,b]) =  \int_{a}^{b}|\text{ap}Av(x)|^2\;dx
 = \frac{(b-a)}{(b-a)^{2}}\;\sum_{i=1}^{Q}|
  v_{i}(b)-v_{i}(a)|^{2}\\
   = \frac{\mathcal{G}^2(u(b),u(a))}{b-a}.
\end{multline*}
\end{proof}

We will now produce a Dirichlet  quasiminimizer with a fractal
branch set. We begin with some definitions:

\begin{enumerate}
\item A {\it diamond} above an interval $I=[a,b]\subset\mathbb{R}$
is a multiple-valued function $u:[a,b]\rightarrow {\bf
Q}_2(\mathbb{R})$ whose the graph is a parallelogram with the
following vertices $[a,h], [(a+b)/2,h], [(a+b)/2, h+(b-a)/2],
[b,h+(b-a)/2]$ where $h\in\mathbb{R}$.
\item  A {\it pluri-diamond}
is a multiple-valued function $u:[0,1]\rightarrow {\bf
Q}_2(\mathbb{R})$ which admits a partition of $[0,1]$ in intervals
$\{I_j\}$ such that
\begin{enumerate}
\item[-] $u$ is a diamond above $I_j$ or a map of the form
$x\rightarrow 2[[ x+p_j]]$ above $I_j$ where $p_j\in \mathbb{R}$,
\item[-] $u$ is continuous on $[0,1]$.
\end{enumerate}
\end{enumerate}

Figure 1 gives examples of pluri-diamonds. It is clear that a
pluri-diamond $u$ is Lipschitz with Lip$(u)\leq \sqrt{2}$. We record
two lemmas for later use:

\begin{lem}\label{pluri-diamond}
A pluri-diamond $u$ is a Dirichlet  4-quasiminimizer.
\end{lem}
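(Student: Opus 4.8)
The plan is to use Theorem~\ref{Kmindim1}: it suffices to check that for every subinterval $(a,b)\subset(0,1)$ one has
$\text{Dir}(u;(a,b))\le 4\,\mathcal{G}^2(u(b),u(a))/(b-a)$. Since a pluri-diamond is piecewise $\sqrt{2}$-Lipschitz and continuous, $\text{Dir}(u;(a,b))$ is a well-behaved integral that adds over the pieces of the partition $\{I_j\}$ meeting $(a,b)$, and $|\text{ap}Au|^2\le 2$ wherever $u$ is a diamond and $=4$ on the "shift" pieces $x\mapsto 2[[x+p_j]]$.

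First I would record that a diamond $u$ above $[a,b]$ of height $h$ has $\text{Dir}(u;(a,b))=(b-a)/2$: on the first half-interval the two sheets are the constant $h$ and the line through $h$ with slope $1$, contributing $(b-a)/2\cdot 1$ to $\int|\text{ap}Au|^2$; wait — more carefully, $|\text{ap}Au|^2=0^2+1^2=1$ on $[a,(a+b)/2]$ and $=1^2+0^2=1$ on $[(a+b)/2,b]$, so $\text{Dir}=b-a$. Meanwhile $\mathcal{G}^2(u(b),u(a))$: at $a$, $u(a)=2[[h]]$; at $b$, $u(b)=[[h]]+[[h+(b-a)/2]]$, so the minimal matching gives $\mathcal{G}^2=0+((b-a)/2)^2=(b-a)^2/4$. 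Hence $4\mathcal{G}^2/(b-a)=(b-a)$, so the inequality holds with equality on a full diamond. The key point I need is that it \emph{also} holds on every subinterval of a diamond and on every subinterval crossing the vertex, and that it is stable under concatenation.

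The main step is a general "concatenation/monotonicity" argument: I would show that if $f:[a,b]\to{\bf Q}_Q(\mathbb{R})$ is continuous, piecewise affine with the one-dimensional Dirichlet-minimizing (hence affine-on-each-piece) structure, and the $4$-quasiminimality inequality holds on each piece of a partition, then it holds on all of $[a,b]$. For this, note $\text{Dir}(f;(a,b))=\sum_j\text{Dir}(f;I_j)\le 4\sum_j \mathcal{G}^2(f(c_j),f(c_{j-1}))/|I_j|$ where $a=c_0<\dots<c_N=b$; then I must bound $\sum_j \mathcal{G}^2(f(c_j),f(c_{j-1}))/|I_j|$ by $\mathcal{G}^2(f(b),f(a))/(b-a)$. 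This is a discrete Cauchy--Schwarz / triangle-inequality statement that is true when the ordered sheets $f_1\le\dots\le f_Q$ each have the property that consecutive differences $f_i(c_j)-f_i(c_{j-1})$ all have the same sign across $j$ (so that $\sum_j|f_i(c_j)-f_i(c_{j-1})|=|f_i(b)-f_i(a)|$) — this is exactly where I must be careful, because on a single diamond the two sheets cross at the vertex and the matching realizing $\mathcal{G}$ switches there. So the hard part will be handling the vertex: I would split any interval $(a,b)$ at the diamond vertices so that on each resulting subinterval the minimal permutation in the definition of $\mathcal{G}$ at the two endpoints is constant, reduce to the single-sheet monotone case where $\sum|f_i(c_j)-f_i(c_{j-1})|=|f_i(b)-f_i(a)|$, apply $\big(\sum a_j\big)^2\le \big(\sum |I_j|\big)\big(\sum a_j^2/|I_j|\big)$ with $a_j=|f_i(c_j)-f_i(c_{j-1})|$, sum over $i$, and then reassemble; the factor $4$ (rather than the sharp $1$ one gets for a genuine Dirichlet minimizer) gives the slack needed to absorb the loss incurred when two adjacent subintervals use different permutations at the shared vertex, which is where I expect the only real estimation to occur.

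Finally I would note $\text{Dir}(u;(0,1))>0$ is automatic once at least one diamond (equivalently, one genuine branch) is present, so Theorem~\ref{Kmindim1} applies and yields that $u$ is a Dirichlet $4$-quasiminimizer.
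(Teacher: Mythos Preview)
Your approach has a genuine gap in the concatenation step. You want to deduce
\[
\sum_j \frac{\mathcal{G}^2(f(c_j),f(c_{j-1}))}{|I_j|}\ \le\ \frac{\mathcal{G}^2(f(b),f(a))}{b-a}
\]
from monotonicity of the ordered sheets, but the Cauchy--Schwarz inequality you invoke, $(\sum a_j)^2 \le (\sum |I_j|)(\sum a_j^2/|I_j|)$, yields exactly the \emph{opposite} direction. The inequality you need is in fact false already for $Q=1$: take $f(0)=0$, $f(1/2)=1$, $f(1)=1$; the left-hand side equals $2$ while the right-hand side equals $1$. No finite constant absorbs this (repeat such pieces), so the factor $4$ cannot rescue the argument, and the strategy ``piecewise $4$-quasiminimality $\Rightarrow$ global $4$-quasiminimality'' collapses.

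The paper's proof avoids concatenation entirely by exploiting a global structural fact you did not use: for a pluri-diamond the ordered sheets $u_{\min}\le u_{\max}$ are each nondecreasing with $u_{\min}',u_{\max}'\in\{0,1\}$ and $u_{\min}'+u_{\max}'\ge 1$ almost everywhere. Hence for \emph{every} $[a,b]\subset[0,1]$ one gets directly
\[
\mathcal{G}^2(u(b),u(a))\ \ge\ \tfrac12\bigl(|\Delta u_{\min}|+|\Delta u_{\max}|\bigr)^2
=\tfrac12\Bigl(\int_a^b(u_{\min}'+u_{\max}')\Bigr)^2\ \ge\ \frac{(b-a)^2}{2},
\]
while $|\text{ap}Au|^2=(u_{\min}')^2+(u_{\max}')^2\le 2$ gives $\text{Dir}(u;(a,b))\le 2(b-a)$. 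Dividing yields the ratio $\le 4$ on every subinterval in one stroke, and Theorem~\ref{Kmindim1} finishes. There is no splitting at vertices and no issue with matching permutations: in codimension one the order-preserving matching always realizes $\mathcal{G}$, so that worry is unfounded. (Two incidental slips: on the shift pieces $x\mapsto 2[[x+p_j]]$ one has $|\text{ap}Au|^2=1^2+1^2=2$, not $4$; and at the right endpoint of a full diamond $u(b)=2[[h+(b-a)/2]]$, not $[[h]]+[[h+(b-a)/2]]$.)
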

\begin{proof}
Let $u_{\min}, u_{\max}:[0,1]\rightarrow\mathbb{R}$ be such that
$u=[[ u_{\min}]]+[[ u_{\max}]]$ and $u_{\min}(x)\leq u_{\max}(x)$
for all $x\in [0,1]$. For every interval $[a,b]\subset [0,1]$, we
have
\begin{multline*}
\mathcal{G}^2(u(b),u(a)) \geq \frac{1}{2}\left(
|u_{\min}(b)-u_{\min}(a)|+|u_{\max}(b)-u_{\max}(a)|\right)^2
\\=\frac{1}{2} \left(\int_{a}^{b} u_{\min}'(x)+ u_{\max}'(x) \;dx\right)^2
\geq \frac{(b-a)^2}{2}
\end{multline*}

and
\begin{multline*}
\frac{(b-a)\int_{a}^{b}
|\text{ap}Au(x)|^{2}\;dx}{\mathcal{G}^2(u(b),u(a))} =
\frac{(b-a)\int_{a}^{b} u_{\min}'^{2}(x)+u_{\max}'^{2}(x)
\;dx}{\mathcal{G}^2(u(b),u(a))}\\ \leq
\frac{2\;(b-a)^{2}}{\mathcal{G}^2(u(b),u(a))}
          \leq  4
\end{multline*}
hence $u$ is a Dirichlet  4-quasiminimizer by Theorem
\ref{Kmindim1}.
\end{proof}

\begin{lemwillem} Let
$\Omega$ be an open subset of $\mathbb{R}^m$, $1<p<\infty$. If
$\{u_i\}$ is a bounded sequence in $W^{1,p}(\Omega)$ such that
$u_i\rightharpoonup u$ in $L^p(\Omega)$ then $u\in W^{1,p}(\Omega)$
and $$\|\nabla u\|_p\leq \liminf_{i\rightarrow\infty}\|\nabla
u_i\|_p.$$
\end{lemwillem}

We will now create a particular sequence of pluri-diamonds
$\{u^i\}$. We define $u^1:\mathbb{R}\rightarrow{\bf
Q}_2(\mathbb{R})$ by requiring that
\begin{itemize}
\item[-] $u^1$ is a pluri-diamond, \item[-] $u^1(1/3)=0$, \item[-]
$u^1$ is a diamond only above $[1/3,2/3]$.
\end{itemize}
We define $u^2:\mathbb{R}\rightarrow{\bf Q}_2(\mathbb{R})$ by
requiring that
\begin{itemize}
\item[-] $u^2$ is a pluri-diamond, \item[-]
$u^2(1/3)=0$, \item[-] $u^2$ is a diamond only above the intervals
$[1/3,2/3],[1/9,2/9]$ and $[7/9,8/9]$.
\end{itemize}

\begin{figure}[htbp]\label{cantor}
    \begin{center}
    \includegraphics[height=4cm,width=4cm]{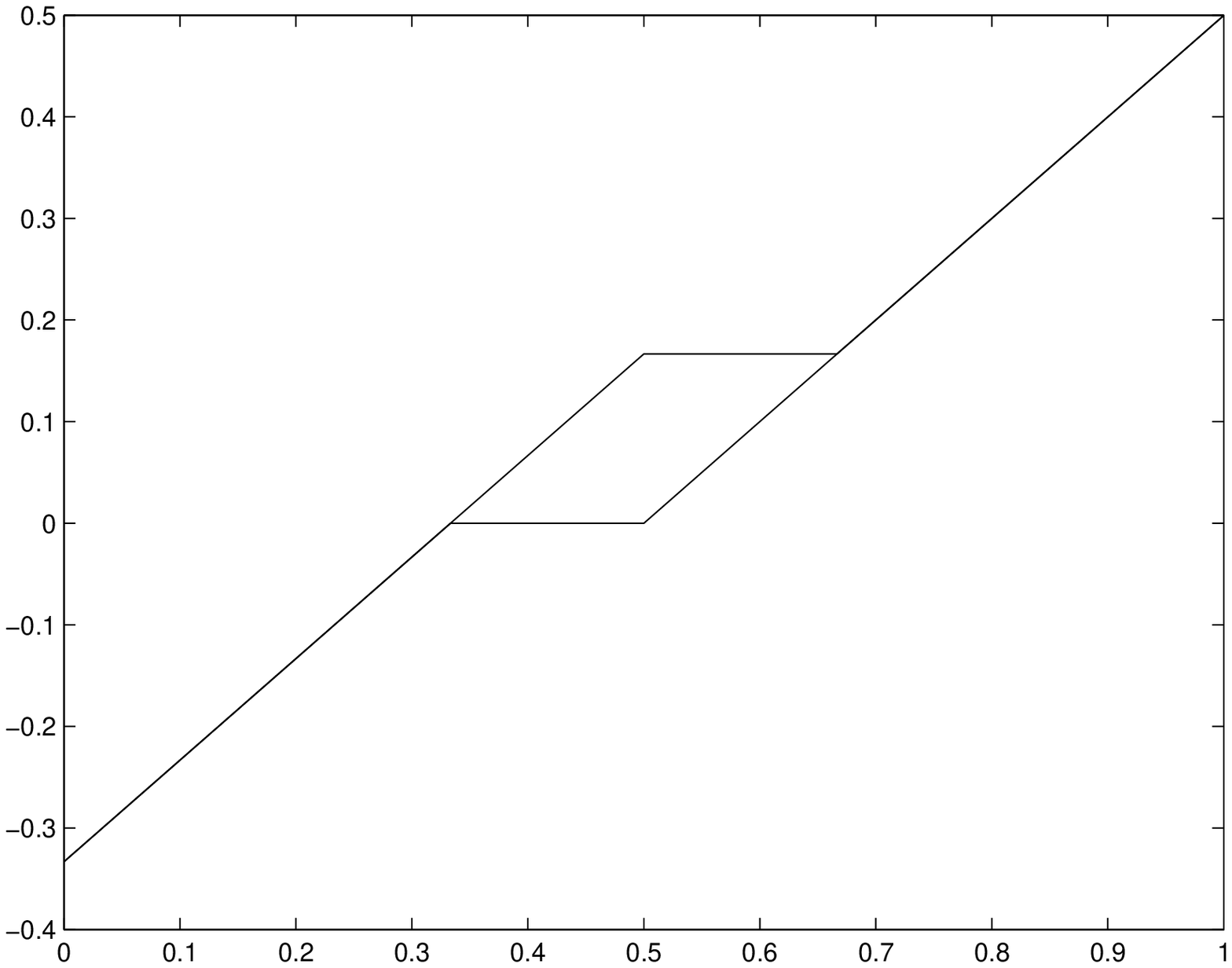}
    \includegraphics[height=4cm,width=4cm]{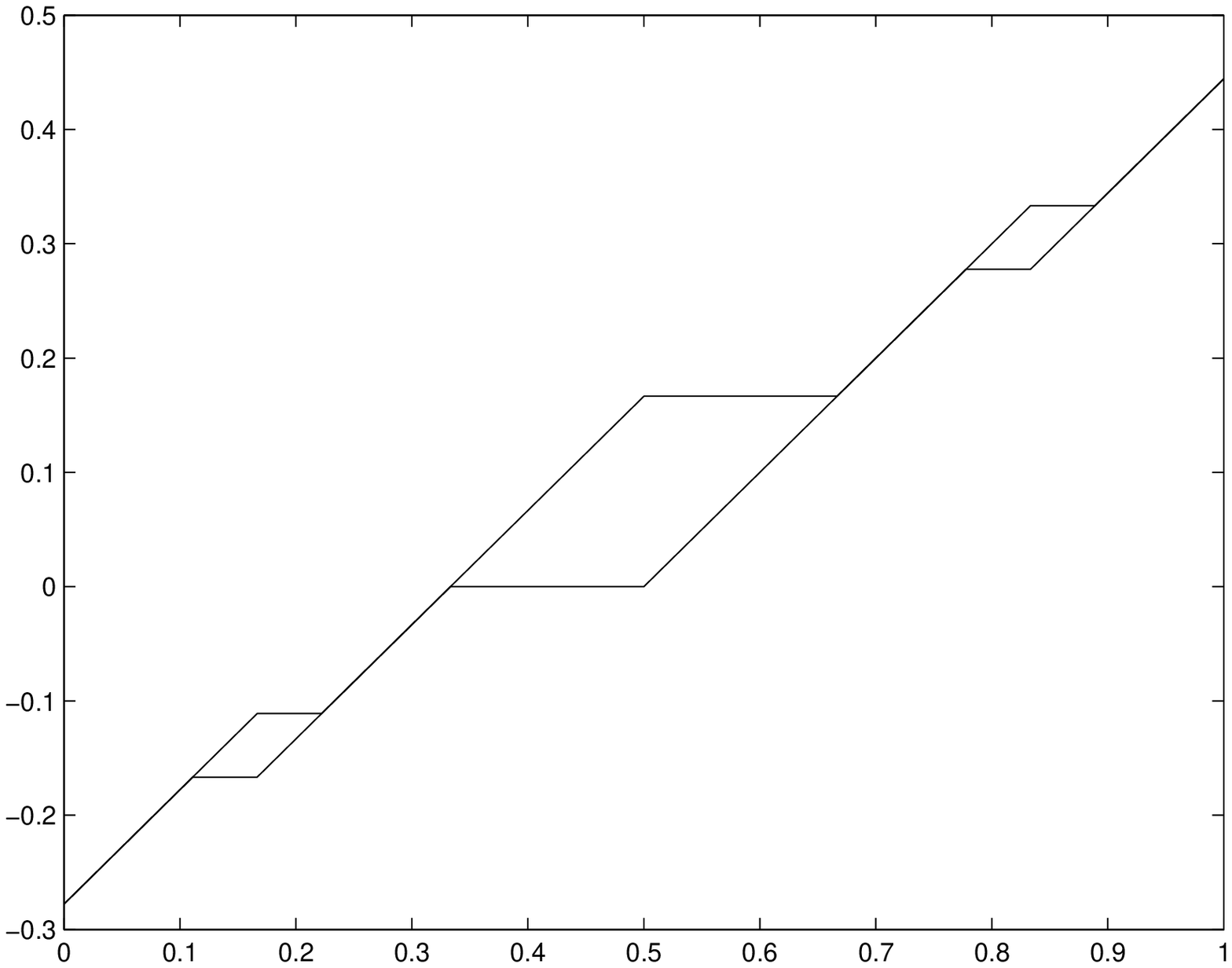}
    \includegraphics[height=4cm,width=4cm]{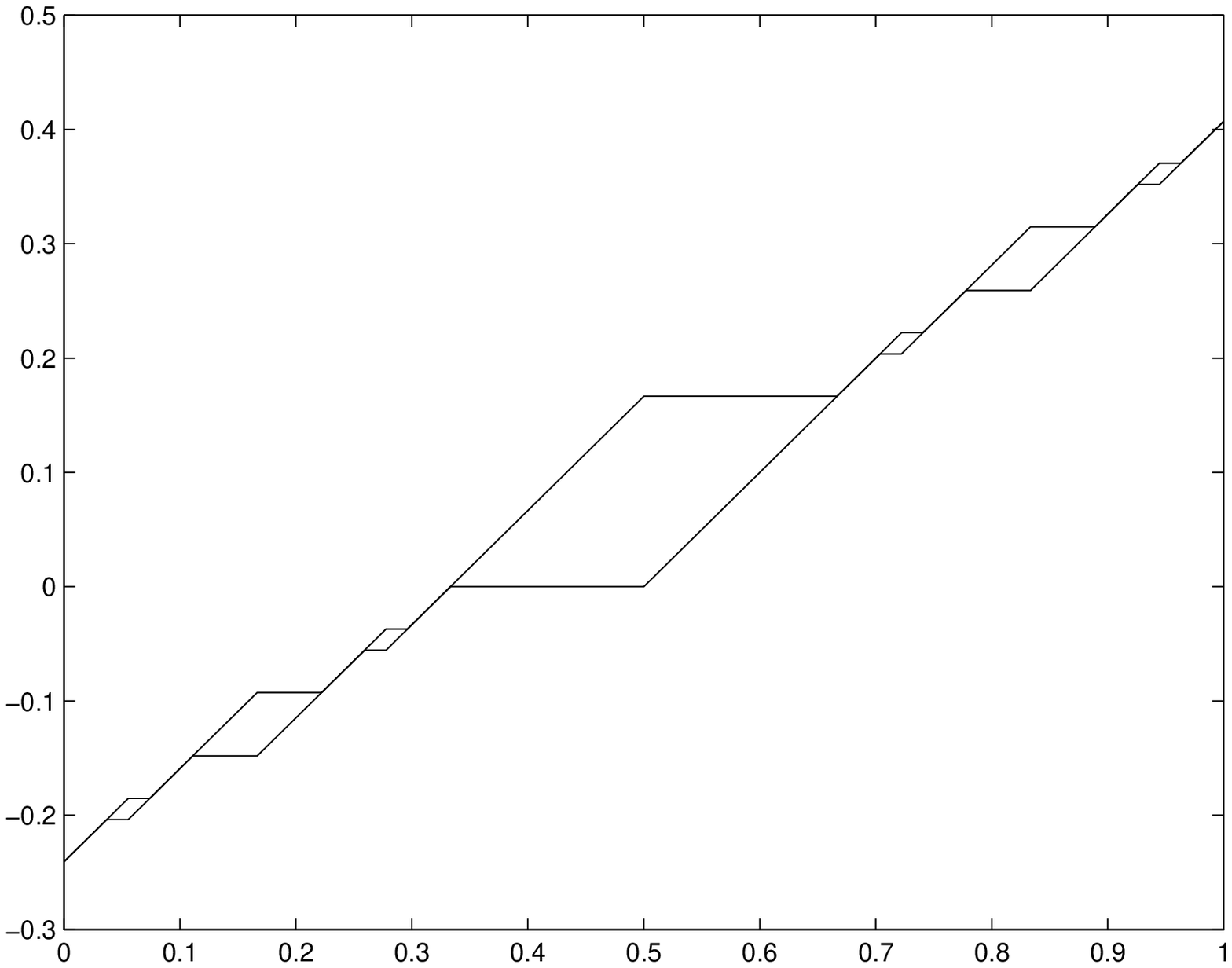}
    \caption{The pluri-diamonds $u^1$, $u^2$ and $u^3$.}
    \end{center}
\end{figure}
The reader will easily imagine how the remaining terms of the
sequence are defined. For each $i\in\mathbb{N}$, we define
$u_{\min}^i, u_{\max}^i:[0,1]\rightarrow\mathbb{R}$ by requiring
that
$$u^i(x)=[[ u_{\min}^i(x)]]+[[
u_{\max}^i(x)]]\;\;\text{
 and  }\;\;u_{\min}^i(x)\leq u_{\max}^i(x)$$ for all $x\in [0,1]$. One
can checks that
$$\text{Lip}(u_{\min}^i)=\text{Lip}(u_{\max}^i)=1$$ for each
$i\in\mathbb{N}$. By Ascoli's Theorem, we can extract two
subsequences still denoted $\{u_{\min}^i\}$ and $\{u_{\max}^i\}$ and
there exists two 1-Lipschitz functions
$u_{\min},u_{\max}:[0,1]\rightarrow\mathbb{R}$ such that
$\{u_{\min}^i\}$ and $\{u_{\max}^i\}$ converge uniformly
 to $u_{\min}$ and $u_{\max}$. We define the Lipschitz
multiple-valued function $u:[0,1]\rightarrow {\bf Q}_2(\mathbb{R})$
by
\begin{equation}\label{exthierry}
u=[[ u_{\min}]]+[[ u_{\max}]].
\end{equation}

\begin{prop}
The multiple-valued function $u$ defined by (\ref{exthierry}) is a
Dirichlet  4-quasiminimizer and the branch set of $u$ is the
Cantor ternary set.
\end{prop}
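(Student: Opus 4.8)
The plan is to establish the two assertions separately: first that $u$ is a Dirichlet $4$-quasiminimizer, then that its branch set is the Cantor ternary set $C$. For the first part, the key tool is Theorem \ref{Kmindim1}: it suffices to check that $\mathrm{Dir}(u;(a,b))\leq 4\,\mathcal{G}^2(u(b),u(a))/(b-a)$ for every $(a,b)\subset(-1,1)$. I would obtain this by a limiting argument from Lemma \ref{pluri-diamond}. Since each $u^i$ is a pluri-diamond, it is a Dirichlet $4$-quasiminimizer, so $\mathrm{Dir}(u^i;(a,b))\leq 4\,\mathcal{G}^2(u^i(b),u^i(a))/(b-a)$; equivalently, writing things out in terms of components, $\int_a^b (u^i_{\min})'^2 + (u^i_{\max})'^2\,dx \leq 4\,\mathcal{G}^2(u^i(b),u^i(a))/(b-a)$. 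The right-hand side converges to $4\,\mathcal{G}^2(u(b),u(a))/(b-a)$ by the uniform convergence $u^i_{\min}\to u_{\min}$, $u^i_{\max}\to u_{\max}$. For the left-hand side, since $u^i_{\min}, u^i_{\max}$ are uniformly $1$-Lipschitz on the compact interval, they are bounded in $W^{1,2}((a,b))$, and uniform convergence gives $L^2$-convergence; hence by Lemma 23.7 in \cite{Willem} (applied with $p=2$) the limits $u_{\min},u_{\max}$ lie in $W^{1,2}((a,b))$ with $\|(u_{\min})'\|_2 \leq \liminf \|(u^i_{\min})'\|_2$ and similarly for $u_{\max}$. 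Combining, $\int_a^b (u_{\min})'^2 + (u_{\max})'^2\,dx \leq \liminf_i \int_a^b (u^i_{\min})'^2 + (u^i_{\max})'^2\,dx \leq 4\,\mathcal{G}^2(u(b),u(a))/(b-a)$. This is exactly the condition of Theorem \ref{Kmindim1}, so $u$ is a Dirichlet $4$-quasiminimizer (and $u\in\mathcal{Y}_2((-1,1),{\bf Q}_2(\mathbb{R}))$ since $\xi\circ u$ is Lipschitz, hence in $W^{1,2}$).

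For the second part, I must identify the branch set $\Sigma = \{x : \sigma \text{ is discontinuous at } x\}$ with the Cantor set $C$. Recall that at stage $i$, $u^i$ is a diamond precisely above the $2^{i-1}$ middle-third intervals removed at steps $1,\dots,i$ in the Cantor construction, and on each such diamond interval $[a,b]$ the two sheets touch at the endpoints ($\mathcal{G}$-distance zero there, $\sigma=1$) but are distinct on the interior, while off these intervals $u^i$ is of the form $x\mapsto 2[[x+p_j]]$, i.e. a single genuinely doubled sheet with $\sigma=1$ everywhere on the complement. Passing to the uniform limit, the diamond openings accumulate on the complement of $C$ in a way that forces $u_{\min} \neq u_{\max}$ on each open middle-third interval (the opening has definite width comparable to the interval length, which survives the limit) and $u_{\min} = u_{\max}$ on $C$. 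I would argue: (i) for $x \notin C$, $x$ lies in the interior of some removed middle-third interval $(a,b)$, on which $u_{\min} < u_{\max}$, so $\sigma \equiv 2$ on a neighborhood of $x$ and $\sigma$ is continuous at $x$; (ii) for $x \in C$, $u_{\min}(x) = u_{\max}(x)$ so $\sigma(x) = 1$, but every neighborhood of $x$ meets a removed interval where $\sigma = 2$, so $\sigma$ is discontinuous at $x$. Hence $\Sigma = C$.

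The main obstacle is part (ii) — specifically, verifying carefully that $u_{\min} = u_{\max}$ exactly on $C$ and that $u_{\min} < u_{\max}$ strictly on each removed open interval. For a point $x\in C$: for each $i$, $x$ is an endpoint or exterior point of all stage-$\le i$ diamond intervals, so along the sheet structure $u^i_{\min}(x)$ and $u^i_{\max}(x)$ differ by at most the opening of the \emph{smallest} diamond whose closure contains $x$ — but $x\in C$ means $x$ is not interior to any diamond interval at any stage, so in fact $u^i_{\min}(x) = u^i_{\max}(x)$ for all $i$, giving $u_{\min}(x) = u_{\max}(x)$ in the limit. For $x$ in a removed open interval $(a,b)$ of length $3^{-k}$: once $i \geq k$, $u^i$ is a fixed diamond above $[a,b]$ with opening of height $(b-a)/2$ at the midpoint, and this diamond is unchanged for all larger $i$ (later refinements only subdivide the \emph{slope-$2$} pieces, never an existing diamond), so $u_{\min}$ and $u_{\max}$ restricted to $[a,b]$ equal that fixed diamond's two sheets, which are strictly separated on $(a,b)$. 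The one subtlety to nail down is precisely this claim that a diamond, once created at some finite stage, is never modified at later stages — this follows from the recursive construction (a new diamond is only inserted above an interval where $u^i$ currently has the form $x\mapsto 2[[x+p_j]]$), and once this is granted the rest is routine.
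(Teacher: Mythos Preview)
Your proposal is correct and follows essentially the same route as the paper: pass the quasiminimizing inequality of Lemma~\ref{pluri-diamond} to the limit using weak lower semicontinuity (Lemma~23.7 in \cite{Willem}) on the Dirichlet side and uniform convergence on the $\mathcal{G}$-side, then invoke Theorem~\ref{Kmindim1}; for the branch set, show $u_{\min}=u_{\max}$ exactly on the Cantor set by noting that each $u^i$ already has equality there, while on a removed interval the diamond is fixed from some stage on. Two cosmetic points: the non-diamond pieces $x\mapsto 2[[x+p_j]]$ have both sheets of slope~$1$ (the ``$2$'' is multiplicity, not slope), and the construction lives on $[0,1]$ rather than $(-1,1)$.
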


\begin{proof}
We estimate for any interval $[a,b]\subset [0,1]$,
\begin{eqnarray*}
\int_a^b |\text{ap}Au(x)|^2\;dx & = & \int_a^b
\left(u_{\min}'(x)\right)^2+\left(u_{\max}'(x)\right)^2\;dx\\
 & \leq & \liminf_{i\rightarrow\infty} \int_a^b
\left(u^i_{\min}\;'(x)\right)^2+\left(u^i_{\max}\;'(x)\right)^2\;dx\\
& = & \liminf_{i\rightarrow \infty} \int_a^b
|\text{ap}Au^i(x)|^2\;dx\\
 & \leq & 4\;\liminf_{i\rightarrow\infty}\;
 \frac{\mathcal{G}^2(u^i(b),u^i(a))}{b-a}\\
 & = & 4\;\liminf_{i\rightarrow\infty}\;
 \frac{|u^i_{\min}(b)-u^i_{\min}(a)|^2+|u^i_{\max}(b)-u^i_{\max}(a)|^2}{b-a}\\
 & = & 4\;
 \frac{|u_{\min}(b)-u_{\min}(a)|^2+|u_{\max}(b)-u_{\max}(a)|^2}{b-a}\\
 & = & 4\; \frac{\mathcal{G}^2(u(b),u(a))}{b-a}
\end{eqnarray*}
where the first inequality follows from Lemma 23.7 in
\cite{Willem} and the second inequality is a consequence of Lemma
\ref{pluri-diamond}. By Theorem \ref{Kmindim1}, the
multiple-valued function $u$ is a Dirichlet  4-quasiminimizer. We
denote the Cantor ternary set by
$$T=\cap_{i=1}^{\infty} T_i$$ where
$T_1=[0,1/3]\cup [2/3,1], T_2=[0,1/9]\cup[2/9,1/3]\cup [2/3,7/9]\cup
[8/9,1]$, etc. We can make the following observations:
\begin{enumerate}
\item If $x\in T$ then $u_{\min}^i(x)=u_{\max}^i(x)$ for all
$i\in\mathbb{N}_0$ hence $u_{\min}(x)=u_{\max}(x)$. \item If
$x\notin T$ then there exists $i\in\mathbb{N}_0$ such that $x\notin
T_j$ for each $j\geq i$. By construction, there exists a constant
$\eta>0$ such that $u_{\max}^j(x)-u_{\min}^j(x)=\eta$ for each
$j\geq i$ hence $u_{\min}(x)\neq u_{\max}(x)$.
\end{enumerate}
We deduce from (1) and (2) that
$$\left\{x\in[0,1]\;|\;\text{card}\;(\text{spt}(u(x)))=1\right\}=T$$
 hence the branch set of $u$ is $T$ since it contains no interval.
\end{proof}
It is clear that this construction can be adapted in order to
obtain a quasiminimizer with a fat Cantor branch set.

\subsection{Application}

Let $a\in L^{\infty}(U^m(0,1),\mathbb{R})$ and consider the
following functional
$$J(u;U^m(0,1))=\int_{U^m(0,1)} a(x)\;|\text{ap}Au(x)|^2 \; d\mathcal{L}^m x$$
defined for each $u\in \mathcal{Y}_2\left(U^m(0,1),{\bf
Q}_Q(\mathbb{R}^n)\right)$. One says that $u:U^m(0,1)\rightarrow
{\bf Q}_Q(\mathbb{R}^n)$ is $J-$minimizing if and only if $u\in
\mathcal{Y}_2\left(U^m(0,1),{\bf Q}_Q(\mathbb{R}^n)\right)$ and,
assuming $u$ has boundary values $v\in \partial
\mathcal{Y}_2\left(\partial U^m(0,1),{\bf
Q}_Q(\mathbb{R}^n)\right)$, one has
\begin{multline*}
J\left(u;U^m(0,1)\right)=\inf \{J\left(w;U^m(0,1)\right) : w\in
\mathcal{Y}_2\left(U^m(0,1),{\bf Q}_Q(\mathbb{R}^n)\right)\\
\text{ has boundary values }v \}.
\end{multline*}
\begin{thm}
Let $m\geq 2$ be an integer. Assume $v\in \partial
\mathcal{Y}_2\left(\partial U^m(0,1),{\bf
Q}_Q(\mathbb{R}^n)\right)$ and $a(x)\geq \gamma>0$ for almost all
$x\in U^m(0,1)$. Then there exists a strictly defined
$J-$minimizing $u\in \mathcal{Y}_2\left(U^m(0,1),{\bf
Q}_Q(\mathbb{R}^n)\right)$ that has boundary values $v$ and
\begin{enumerate}
\item if $m=2$, $u|_{B^2(0,\delta)}$ is Hölder continuous for all
$0<\delta<1$; \item if $m\geq 3$ and
$$\frac{\|a\|_{\infty}}{\gamma}\leq \frac{1}{1-\varepsilon_Q}$$
then $u|_{B^m(0,\delta)}$ is Hölder continuous for all
$0<\delta<1$.
\end{enumerate}
\end{thm}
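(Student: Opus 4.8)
The plan is to reduce both assertions to the quasiminimizer theory of this section: I will show that every $J$-minimizer is a Dirichlet $K$-quasiminimizer with $K=\|a\|_\infty/\gamma$, and then invoke Theorems~\ref{Kregulm2} and~\ref{regulm3}.

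First I would produce a strictly defined $J$-minimizing $u\in\mathcal{Y}_2(U^m(0,1),{\bf Q}_Q(\mathbb{R}^n))$ with boundary values $v$ by the direct method, following Almgren's proof of Theorem 2.2(2) in \cite{Frederick J. Almgren}. Along a minimizing sequence $\{w_k\}$ one has $\gamma\,\text{Dir}(w_k;U^m(0,1))\le J(w_k;U^m(0,1))$, which is bounded, and since all the traces equal $\xi\circ v$ the sequence $\{\xi\circ w_k\}$ is bounded in $W^{1,2}(U^m(0,1),\mathbb{R}^{PQ})$; after passing to a subsequence $\xi\circ w_k\rightharpoonup g$ in $W^{1,2}$ with a.e.\ convergence. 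As $Q^*$ is closed, $g$ is $Q^*$-valued a.e., so $u:=\xi^{-1}\circ g\in\mathcal{Y}_2(U^m(0,1),{\bf Q}_Q(\mathbb{R}^n))$ and has boundary values $v$. The lower semicontinuity of $J$ along this sequence follows from that of the Dirichlet integral on open subdomains together with a standard measure-theoretic localization argument using only $0<\gamma\le a\le\|a\|_\infty<\infty$. One then passes to the strictly defined representative of $u$.

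The core step is short. Fix a ball $U\subset U^m(0,1)$ and let $w\in\mathcal{Y}_2(U,{\bf Q}_Q(\mathbb{R}^n))$ have boundary values $u|_{\partial U}$. The function equal to $w$ on $U$ and to $u$ on $U^m(0,1)\setminus U$ belongs to $\mathcal{Y}_2(U^m(0,1),{\bf Q}_Q(\mathbb{R}^n))$ and has boundary values $v$, so $J$-minimality of $u$ together with additivity of the integral gives $J(u;U)\le J(w;U)$; i.e.\ $u$ is $J$-minimizing on $U$. Applying this with $w$ replaced by a Dirichlet minimizing multiple-valued function $v_U$ having boundary values $u|_{\partial U}$ (which exists by Theorem 2.2(2) in \cite{Frederick J. Almgren}) yields
\[
\gamma\,\text{Dir}(u;U)\le J(u;U)\le J(v_U;U)\le\|a\|_\infty\,\text{Dir}(v_U;U),
\]
so $\text{Dir}(u;U)\le K\,\text{Dir}(v_U;U)$ with $K=\|a\|_\infty/\gamma$; thus $u$ is a Dirichlet $K$-quasiminimizer.

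It remains to read off the regularity. If $\text{Dir}(u;U^m(0,1))=0$, then $\text{ap}D(\xi\circ u)=0$ a.e., so $\xi\circ u$---and hence $u$---is constant and the assertion is trivial. Otherwise, for $m=2$ the H\"older continuity of $u|_{B^2(0,\delta)}$ is exactly conclusion (3) of Theorem~\ref{Kregulm2}; for $m\ge 3$ the hypothesis $\|a\|_\infty/\gamma\le 1/(1-\varepsilon_Q)$ gives $K=\|a\|_\infty/\gamma\le 1/(1-\varepsilon_Q)$, so condition (\ref{small}) is met and conclusion (3) of Theorem~\ref{regulm3} gives the H\"older continuity of $u|_{B^m(0,\delta)}$. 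I expect the only real obstacle to be the lower semicontinuity of $J$ in the existence step---the intrinsic Almgren energy is not a convex function of $\text{ap}D(\xi\circ u)$ and $a$ is merely bounded measurable---but this is essentially routine; the regularity itself is immediate once the quasiminimizer constant $K=\|a\|_\infty/\gamma$ has been identified.
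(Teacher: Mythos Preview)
Your approach is essentially identical to the paper's: existence via a direct adaptation of Almgren's Theorem~2.2(2), then the observation that any $J$-minimizer is a Dirichlet $K$-quasiminimizer with $K=\|a\|_\infty/\gamma$, followed by an appeal to Theorems~\ref{Kregulm2} and~\ref{regulm3}. You supply considerably more detail than the paper (which dispatches the whole thing in three sentences), but the structure and the key identification of the quasiminimizer constant are the same; note only that condition~(\ref{small}) in Theorem~\ref{regulm3} is a strict inequality, so the borderline case $K=1/(1-\varepsilon_Q)$ is not literally covered---a wrinkle the paper's own proof shares.
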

\begin{proof}
The existence proof is a direct adaptation of the proof of Theorem
2.2(2) in \cite{Frederick J. Almgren}. The regularity follows from
the fact that $u$ is a Dirichlet
$\frac{\|a\|_{\infty}}{\gamma}$-quasiminimizer. Consequently it
remains to apply Theorem \ref{Kregulm2} or Theorem  \ref{regulm3}.
\end{proof}

\section{Dirichlet $\omega-$minimizer}\label{Wmini}
Throughout this section $\omega:(0,\infty)\rightarrow\mathbb{R}$
will be a non-negative function.

\begin{definition}
A strictly defined function $u\in\mathcal{Y}_2(U^{m}(0,1),{\bf
Q}_{Q}(\mathbb{R}^n))$ is a Dirichlet $\omega$-minimizer if for
every ball $U^m(x,r)\subset U^{m}(0,1)$,
$$\text{Dir}(u;U^m(x,r))\leq (1+\omega(r)) \;\text{Dir}(v;U^m(x,r))$$
 where $v$ is a Dirichlet
minimizing multiple-valued function having boundary values
$u|_{\partial U^m(x,r)}\in \partial\mathcal{Y}_2\left(\partial
U^m(x,r),{\bf Q}_Q(\mathbb{R}^n)\right)$.
\end{definition}

\subsection{Regularity}

The following regularity result is a direct consequence of Theorem
\ref{Kregulm2} and Theorem \ref{regulm3}.

\begin{thm}\label{regul}
Let $m\geq 2$ be an integer. Suppose that $u\in
\mathcal{Y}_2(U^m(0,1),{\bf Q}_{Q}(\mathbb{R}^n))$ is a strictly
defined $\omega$-minimizer such that $\text{Dir}(u;U^m(0,1))>0$.
If\\
$\lim_{r\rightarrow 0}\omega(r)=0$ then
$$u\in
C_{\text{loc}}^{0,\sigma}(U^m(0,1),{\bf Q}_{Q}(\mathbb{R}^n))$$ for
some $0<\sigma<1$.
\end{thm}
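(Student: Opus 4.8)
The strategy is to reduce to the quasiminimizer results of Section~\ref{quasireg} by localization: because $\lim_{r\to 0}\omega(r)=0$, on a ball of sufficiently small radius the defining inequality of an $\omega$-minimizer becomes, verbatim, the defining inequality of a Dirichlet $K$-quasiminimizer for a fixed $K>1$, and after rescaling such a ball to $U^m(0,1)$ we may invoke Theorem~\ref{Kregulm2} (if $m=2$) or Theorem~\ref{regulm3} (if $m\geq 3$). First I would fix the constant $K$. If $m=2$, any $K>1$ is admissible; if $m\geq 3$, I use the hypothesis to choose $K$ with $1<K<\frac{1}{1-\varepsilon_Q}$, so that the smallness condition~(\ref{small}) of Theorem~\ref{regulm3} holds --- this is the only place where $\lim_{r\to 0}\omega(r)=0$ is actually needed. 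Since $\omega(r)\to 0$, there is $R_0\in(0,1)$ with $1+\omega(r)\leq K$ whenever $0<r\leq R_0$.

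Next, fix an arbitrary $x_0\in U^m(0,1)$ and set $\rho=\tfrac12\min\{R_0,\,1-|x_0|\}$, so that $B^m(x_0,\rho)\subset U^m(0,1)$ and $\rho\leq R_0$. If $\text{Dir}(u;U^m(x_0,\rho))=0$ then $\text{ap}Au=0$ $\mathcal{L}^m$-almost everywhere on $U^m(x_0,\rho)$; since $|\text{ap}D(\xi\circ u)|\leq\text{Lip}(\xi)\,|\text{ap}Au|$, the weak derivative of $\xi\circ u$ vanishes a.e.\ on this connected ball, so $\xi\circ u$ --- being strictly defined --- is constant there, whence $u$ is constant near $x_0$ and there is nothing to prove. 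So we may assume $\text{Dir}(u;U^m(x_0,\rho))>0$, and we put $\tilde u(y):=u(x_0+\rho y)$ for $y\in U^m(0,1)$. An affine change of variables in the integrals defining ``strictly defined'' and $\text{Dir}$ shows that $\tilde u\in\mathcal{Y}_2(U^m(0,1),{\bf Q}_Q(\mathbb{R}^n))$ is strictly defined and that $\text{Dir}(\tilde u;U^m(y_0,t))=\rho^{2-m}\,\text{Dir}(u;U^m(x_0+\rho y_0,\rho t))$ for every ball $U^m(y_0,t)\subseteq U^m(0,1)$; in particular $\text{Dir}(\tilde u;U^m(0,1))>0$. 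Moreover, the inverse rescaling $w\mapsto\hat w$, $\hat w(x):=w((x-x_0)/\rho)$, carries a Dirichlet minimizing function on $U^m(y_0,t)$ with boundary values $\tilde u|_{\partial U^m(y_0,t)}$ to a Dirichlet minimizing function on $U':=U^m(x_0+\rho y_0,\rho t)\subset U^m(0,1)$ with boundary values $u|_{\partial U'}$. Hence, for any ball $U=U^m(y_0,t)\subseteq U^m(0,1)$ and any Dirichlet minimizer $w$ with boundary values $\tilde u|_{\partial U}$, using $\rho t\leq\rho\leq R_0$ and the $\omega$-minimizing inequality for $u$ on $U'$,
\begin{multline*}
\text{Dir}(\tilde u;U)=\rho^{2-m}\,\text{Dir}(u;U')\leq\rho^{2-m}(1+\omega(\rho t))\,\text{Dir}(\hat w;U')\\
\leq K\rho^{2-m}\,\text{Dir}(\hat w;U')=K\,\text{Dir}(w;U),
\end{multline*}
the last equality again by the scaling of $\text{Dir}$. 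Thus $\tilde u$ is a strictly defined Dirichlet $K$-quasiminimizer on $U^m(0,1)$ with $\text{Dir}(\tilde u;U^m(0,1))>0$.

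It remains to apply Theorem~\ref{Kregulm2} when $m=2$, or Theorem~\ref{regulm3} when $m\geq 3$ (its hypothesis~(\ref{small}) being met by the choice of $K$), to $\tilde u$: there is $\sigma\in(0,1)$ --- namely $\sigma=\tfrac{1}{2KQ}$ when $m=2$, and $\sigma=\sigma(K,\varepsilon_Q,m)$ when $m\geq 3$ --- such that $\tilde u|_{B^m(0,\delta)}$ is H\"older continuous with exponent $\sigma$ for every $0<\delta<1$. Undoing the rescaling, $u|_{B^m(x_0,\delta\rho)}$ is H\"older continuous with exponent $\sigma$, so $u$ is $\sigma$-H\"older on a neighborhood of $x_0$. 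Since $\sigma$ depends only on $K,Q,\varepsilon_Q$ and $m$ --- none of which depends on $x_0$ --- and $x_0\in U^m(0,1)$ was arbitrary, we conclude $u\in C^{0,\sigma}_{\text{loc}}(U^m(0,1),{\bf Q}_Q(\mathbb{R}^n))$. The one point deserving genuine attention is the verification in the second paragraph that the rescaled function is a $K$-quasiminimizer --- that is, keeping straight how the Dirichlet integral and the class of Dirichlet minimizers transform under $y\mapsto x_0+\rho y$; this is routine, and if one prefers one may avoid the rescaling altogether and simply rerun the proofs of Theorems~\ref{Kregulm2} and~\ref{regulm3} on the ball $U^m(x_0,\rho)$, on which $u$ itself already satisfies the $K$-quasiminimizing inequality.
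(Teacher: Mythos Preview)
Your proof is correct and follows exactly the approach the paper intends: the paper states the theorem as ``a direct consequence of Theorem~\ref{Kregulm2} and Theorem~\ref{regulm3}'' and gives no further argument, and you have supplied precisely the natural reduction---choosing $K$ admissible for the relevant quasiminimizer theorem, using $\omega(r)\to 0$ to find a radius scale below which $1+\omega(r)\le K$, and then localizing/rescaling to put yourself in the setting of Theorems~\ref{Kregulm2} or~\ref{regulm3}. One small expositional remark: your aside that the choice of $K$ ``is the only place where $\lim_{r\to 0}\omega(r)=0$ is actually needed'' is slightly misleading as written, since in the very next sentence you use $\omega(r)\to 0$ (for \emph{both} $m=2$ and $m\ge3$) to produce $R_0$; what you presumably mean is that for $m=2$ mere boundedness of $\omega$ would already suffice, whereas for $m\ge3$ the limit is genuinely needed to force $K<1/(1-\varepsilon_Q)$.
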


\subsection{Branch set of $\omega-$minimizer}\label{branch_w}
We will construct a one-dimensional Dirichlet $\omega-$minimizer
with a unique branch point at the origin. First of all, we show
that a one-dimensional single-valued $\omega-$minimizer does not
have to be a straight line even if $\lim_{r\to 0} \omega(r)=0$.

\begin{prop}\label{weiex}
$\sin x:(-\pi/4,\pi/4)\to\mathbb{R}$ is a Dirichlet
$\omega-$minimizer with
$$\omega(r)=\frac{r^2}{\sin^2 r}-1\searrow 0\text{ as }r\searrow 0.$$
\end{prop}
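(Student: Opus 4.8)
The plan is to reduce this one-dimensional, single-valued claim to a one-variable estimate. Since $Q=1$ and $n=1$, a Dirichlet minimizing competitor with given boundary values on an interval $[a,b]\subset(-\pi/4,\pi/4)$ is (by Theorem 2.16 in \cite{Frederick J. Almgren}, or elementary calculus) just the affine interpolant of $\sin a$ and $\sin b$, whose Dirichlet integral is $(\sin b-\sin a)^2/(b-a)$. So the $\omega$-minimizing inequality on $[a,b]$ reads
\begin{equation*}
\int_a^b \cos^2 x\,dx \leq \bigl(1+\omega(b-a)\bigr)\frac{(\sin b-\sin a)^2}{b-a},
\end{equation*}
and the job is to find the smallest admissible $\omega$ and check it depends only on the radius $r=(b-a)/2$ of the ball $U^1(x,r)$, not on its center, and that it tends to $0$.

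First I would compute the left side exactly: $\int_a^b\cos^2 x\,dx=\tfrac{b-a}{2}+\tfrac{1}{4}(\sin 2b-\sin 2a)$, and write $\sin b-\sin a=2\cos\bigl(\tfrac{a+b}{2}\bigr)\sin\bigl(\tfrac{b-a}{2}\bigr)$ and $\sin 2b-\sin 2a=2\cos(a+b)\sin(b-a)$. Setting $c=\tfrac{a+b}{2}$ (the center) and $t=\tfrac{b-a}{2}$ (the radius), the inequality becomes, after dividing through,
\begin{equation*}
t+\tfrac{1}{2}\cos(2c)\sin(2t) \leq \bigl(1+\omega(2t)\bigr)\,2t\cos^2 c\,\frac{\sin^2 t}{t^2}.
\end{equation*}
Here I would worry about whether the center-dependent factors $\cos(2c)$ and $\cos^2 c$ on the two sides spoil a clean bound; the key algebraic identity to exploit is $\tfrac12\sin(2t)=\sin t\cos t$, so the left side is $t+\cos(2c)\sin t\cos t$ and, using $\cos(2c)=2\cos^2c-1$, one can try to group terms so that the right side dominates with the stated $\omega$. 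The natural guess, matching the claimed $\omega(r)=r^2/\sin^2 r-1$, is that the sharp comparison comes from the worst center; I expect that after simplification the inequality reduces to the center-free statement $t\leq \dfrac{\sin^2 t}{t}$, i.e. $\sin t\geq t\cos t$ is not quite it — rather $t^2\geq \sin t\cdot(\text{something})$, and the residual center-dependent terms cancel or have a favourable sign on $(-\pi/4,\pi/4)$ because $\cos$ is positive and decreasing there.

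The main obstacle is precisely this bookkeeping of the center-dependent trigonometric terms: I need to verify that, over all centers $c$ with $[c-t,c+t]\subset(-\pi/4,\pi/4)$, the inequality holds with the single function $\omega(2t)=4t^2/\sin^2(2t)-1$ (equivalently $\omega(r)=r^2/\sin^2 r-1$ after relabelling $r=2t$), and that no larger $\omega$ is forced. I would handle this by isolating the required inequality in the form $g(c,t)\geq 0$ with $g$ affine (or at worst quadratic) in $\cos(2c)$, so that it suffices to check the two extreme values of $\cos(2c)$ on the allowed range; the endpoint $c=0$ should give equality (explaining the sharpness and the exact $\omega$), and the other extreme should give a strict inequality from concavity/convexity of $\sin$ on $(0,\pi/2)$. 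Finally, $\omega(r)=r^2/\sin^2 r-1$ is manifestly non-negative on $(0,\pi)$ (since $|\sin r|\leq|r|$) and $\to 0$ as $r\searrow 0$ since $\sin r/r\to 1$, which gives the displayed monotone decay $\omega(r)\searrow 0$; combined with Theorem \ref{regul} this also re-confirms that $\sin x$ is (as it must be) locally H\"older—indeed smooth.
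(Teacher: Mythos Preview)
Your strategy --- compute both Dirichlet integrals exactly and then maximize their ratio over the center $c$ --- is exactly the paper's approach. But your identification of the extremal center is backwards, and this is the one place where the computation has real content.

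Write the ratio as
\[
W(c,t)=\frac{t+\tfrac12\cos(2c)\sin(2t)}{2\cos^2 c\,\sin^2 t/t}.
\]
Substituting $\cos(2c)=2\cos^2 c-1$ and setting $u=\cos^2 c\in(1/2,1]$, the numerator is affine in $u$ and the denominator is linear in $u$, so $W=A/u+B$ with $A=\tfrac{t}{2\sin^2 t}\bigl(t-\tfrac12\sin 2t\bigr)>0$. Hence $W$ is \emph{decreasing} in $u$: the maximum occurs at the smallest $u$, i.e.\ as $c\to\pm\pi/4$, where $W\to t^2/\sin^2 t$. At $c=0$ one gets instead $W=\dfrac{t^2+t\sin t\cos t}{2\sin^2 t}$, which is strictly smaller (equivalent to $\tfrac12\sin 2t<t$). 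So the sharp $\omega$ is realized at the boundary of the domain, not at the center; your claim that ``the endpoint $c=0$ should give equality'' is incorrect and would lead you to the wrong $\omega$. The paper does precisely this maximization, via $\partial_x$ of the numerator over $\cos^2 x$, and finds the maximum at $x=\pi/4$.

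A secondary slip: in the definition of $\omega$-minimizer the argument of $\omega$ is the \emph{radius}, so with your notation you should write $\omega(t)=t^2/\sin^2 t-1$, not $\omega(b-a)=\omega(2t)$. Your relabelling $r=2t$ would give the wrong function.
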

\begin{proof}
It is easy to check that
\begin{equation*}
\begin{split}
\text{Dir}(\sin(\cdot);U^1(x,r))&=\int_{x-r}^{x+r} (\cos s)^2 ds\\
&=\frac{\sin 2(x+r)-\sin 2(x-r)}{4}+r\\
&=\frac{\cos 2x\sin 2r}{2}+r
\end{split}
\end{equation*}
The Dirichlet minimizer $v$ with boundary values
$v(x-r)=\sin(x-r)$ and $v(x+r)=\sin(x+r)$ is the straight line
with slope
$$\frac{\sin(x+r)-\sin(x-r)}{2r},$$
hence
\begin{equation*}
\begin{split}
\text{Dir}(v;U^1(x,r))&=\left[\frac{\sin(x+r)-\sin(x-r)}{2r}\right]^2 2r\\
&=\frac{2\cos^2 x\sin^2 r}{r}.
\end{split}
\end{equation*}
Now it suffices to show that
\begin{equation}\label{inequality_w}
\frac{\cos 2x\sin 2r}{2}+r\le (1+\omega(r))\frac{2\cos^2 x\sin^2 r}{r},
\end{equation}
for any $U^1(x,r)\subset (-\pi/4,\pi/4)$.
Define the function
$$W(x,r)=\left[\frac{\cos 2x\sin 2r}{2}+r\right]/\left[\frac{2\cos^2 x\sin^2 r}{r}\right].$$
Denote
$$W(x,r)=\frac{1}{4\sin^2 r}H(x,r),$$
where
$$H(x,r)=\frac{r\cos(2x)\sin(2r)+2r^2}{\cos^2 x}.$$
We have
\begin{equation*}
\begin{split}
\frac{\partial H}{\partial x}=\frac{2r\sin x}{\cos^3
x}(2r-\sin(2r)).
\end{split}
\end{equation*}
For $r\in(0,1]$,
$$\frac{\partial H}{\partial x}\le 0, \text{ for}\;x\in(-\pi/4,0],$$
$$\frac{\partial H}{\partial x}\ge 0, \text{ for}\;x\in[0,\pi/4).$$
Since $W(x,r)$ is an even function of $x$, we conclude
$$W(x,r)\le W(\pi/4,r)=\frac{r^2}{\sin^2 r},\;\forall x\in(-\pi/4,\pi/4)$$
which proves (\ref{inequality_w}).
\end{proof}

We are ready to construct a one-dimensional ${\bf
Q}_2(\mathbb{R})-$valued $\omega-$minimizer with a branch point
and $\lim_{r\to 0}\omega(r)=0$.

\begin{prop}\label{weiex2}
The multiple-valued function
$$u(x)=[[x]]+[[\sin x]]:(-\pi/4,\pi/4)\to{\bf Q}_2(\mathbb{R})$$
is a $\omega-$minimizer with
$$\omega(r)=\frac{r^2}{\sin^2 r}-1\searrow 0\text{ as }r\searrow 0.$$
The origin is a branch point of $u$.
\end{prop}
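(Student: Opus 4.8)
The plan is to reduce the two-valued problem to the single-valued computation already carried out in Proposition \ref{weiex}. First I would describe the Dirichlet minimizer competing against $u$ on an interval $U^1(x,r)\subset(-\pi/4,\pi/4)$. Since we are in dimension and codimension one, Theorem 2.16 in \cite{Frederick J. Almgren} (invoked via Theorem \ref{Kmindim1}) tells us that the Dirichlet minimizing multiple-valued function with the prescribed boundary values is obtained by ordering the two boundary sheets and interpolating each linearly. The subtlety is that the ordering of $\{x,\sin x\}$ can switch: on $(-\pi/4,0)$ one has $\sin x<x$, at $0$ they coincide, and on $(0,\pi/4)$ one has $x<\sin x$. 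So I would split into cases according to whether $x-r$ and $x+r$ lie on the same side of $0$ or straddle it.

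When $U^1(x,r)$ does not contain $0$, the two sheets $x\mapsto x$ and $x\mapsto\sin x$ keep their order throughout $[x-r,x+r]$, hence the minimizer $v$ simply splits as $v=[[v_1]]+[[v_2]]$ where $v_1$ is the linear interpolation of $t\mapsto t$ (which is $t\mapsto t$ itself, contributing Dirichlet energy $2r$) and $v_2$ is the linear interpolation of $\sin$ between the two endpoints. Likewise $\text{Dir}(u;U^1(x,r))=2r+\int_{x-r}^{x+r}(\cos s)^2\,ds$, because the Dirichlet integral of a two-valued function is just the sum of the Dirichlet integrals of its sheets. Subtracting the common term $2r$ from both sides, the desired inequality $\text{Dir}(u;U^1(x,r))\le(1+\omega(r))\,\text{Dir}(v;U^1(x,r))$ follows from the inequality $\int_{x-r}^{x+r}(\cos s)^2\,ds\le(1+\omega(r))\,\text{Dir}(v_2;U^1(x,r))$, which is exactly inequality (\ref{inequality_w}) proved in Proposition \ref{weiex}, together with the trivial fact that $2r\le(1+\omega(r))\,2r$. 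Adding the two inequalities gives the claim in this case.

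The remaining case is when $0\in U^1(x,r)$, i.e. $x-r<0<x+r$. Here the two boundary values are $\{x-r,\sin(x-r)\}$ at the left endpoint and $\{x+r,\sin(x+r)\}$ at the right endpoint; since $\sin(x-r)\le x-r$ and $x+r\le\sin(x+r)$ fails in general—wait, more carefully, one orders at each endpoint and the minimizer connects $\sin(x-r)$ to $\min\{x+r,\sin(x+r)\}$ and $x-r$ to $\max\{x+r,\sin(x+r)\}$; in this regime $\sin(x+r)>x+r$ for $x+r\in(0,\pi/4)$ and $\sin(x-r)>x-r$ for $x-r\in(-\pi/4,0)$, so the minimizer's two sheets are the segment from $\sin(x-r)$ to $x+r$ and the segment from $x-r$ to $\sin(x+r)$. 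I expect this case to be the main obstacle: one must verify that the sum of the squared slopes of these two crossing segments, times $2r$, still dominates $\text{Dir}(u;U^1(x,r))/(1+\omega(r))$. I would handle it by noting that rearranging the matching only decreases Dirichlet energy (the ordered/linear configuration is the minimizer, so any other competitor, in particular the ``parallel'' pairing connecting $x-r$ to $x+r$ and $\sin(x-r)$ to $\sin(x+r)$, has larger or equal energy), hence it suffices to prove the inequality with the parallel competitor $\widetilde v=[[\,t\,]]+[[\text{lin interp of }\sin\,]]$; but that competitor has exactly the energy $2r+\text{Dir}(v_2;U^1(x,r))$ of the previous case, so (\ref{inequality_w}) applies verbatim. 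Finally, the origin is a branch point because $\text{card}(\text{spt}(u(x)))=2$ for $x\ne 0$ while $\text{card}(\text{spt}(u(0)))=1$, so $\sigma$ is discontinuous at $0$, and $0$ is the only such point since $x=\sin x$ only at $x=0$ on $(-\pi/4,\pi/4)$. This completes the proof.
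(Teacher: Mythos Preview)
Your overall strategy---reducing to the single-valued inequality of Proposition~\ref{weiex} by treating the two sheets separately---is exactly the paper's approach, and your Case~1 (interval not containing $0$) is fine. There is, however, a genuine gap in Case~2. You correctly note that when $0\in U^1(x,r)$ the ordering of the two boundary sheets switches, so by Theorem~2.16 in \cite{Frederick J. Almgren} the Dirichlet minimizer $v$ uses the ``crossed'' pairing rather than the parallel one. But the fix you propose runs the inequality in the wrong direction: you observe that the parallel competitor $\widetilde v$ satisfies $\text{Dir}(\widetilde v)\ge\text{Dir}(v)$ (true, since $v$ is the minimizer) and conclude that it suffices to verify $\text{Dir}(u)\le(1+\omega(r))\,\text{Dir}(\widetilde v)$. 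This does not follow---a larger right-hand side yields a \emph{weaker} inequality, and from $\text{Dir}(u)\le(1+\omega(r))\,\text{Dir}(\widetilde v)$ together with $\text{Dir}(\widetilde v)\ge\text{Dir}(v)$ one cannot deduce $\text{Dir}(u)\le(1+\omega(r))\,\text{Dir}(v)$. (A minor aside: the ordering is also reversed in your text---on $(0,\pi/4)$ one has $\sin x<x$, not $x<\sin x$---though this does not affect the logical structure.)

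The paper's own proof, for what it is worth, does not address this case either: it simply asserts that the minimizer is always the parallel pairing and invokes Proposition~\ref{weiex} directly. So you were right to worry about it, but the worry still needs resolving. One way to close the gap is to estimate $\text{Dir}(v)$ for the crossed pairing directly. Writing $A=(x+r)-\sin(x+r)>0$ and $B=\sin(x-r)-(x-r)>0$, a short computation (using Proposition~\ref{weiex} for the $\sin$ sheet and the explicit value $2r\,\text{Dir}(v)=(2r-A)^2+(2r-B)^2$) shows that the desired inequality reduces to $AB\le 2(r^2-\sin^2 r)$. With $f(t)=t-\sin t$ this reads $f(x+r)\,f(r-x)\le 2f(r)(r+\sin r)$; since $\log f$ is concave on $(0,\pi/2)$, the product $f(s)f(2r-s)$ is maximized at $s=r$, giving $f(x+r)\,f(r-x)\le f(r)^2$, and $f(r)=r-\sin r\le 2(r+\sin r)$ is immediate.
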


\begin{proof}
Proposition \ref{weiex2} is a direct consequence of Proposition
\ref{weiex} since 
the Dirichlet minimizer $v$ with boundary values
$u(x-r)=[[x-r]]+[[\sin(x-r)]],u(x+r)=[[x+r]]+[[\sin(x+r)]]$ is two
straight lines connecting $x-r$ with $x+r$ and $\sin(x-r)$ with
$\sin(x+r)$.
\end{proof}

Let us mention that the authors ignore the maximum Hausdorff
dimension of branch sets of $\omega$-minimizers.

\section{Dirichlet almost minimizer}\label{almost}

Throughout this section $c\geq 0$ and $0<\alpha<1$ are real numbers.

\begin{definition}
A strictly defined function  $u\in\mathcal{Y}_2(U^m(0,1),{\bf
Q}_Q(\mathbb{R}^n))$ is a Dirichlet $(c,\alpha)$-almost minimizer if
for every ball $U^m(x,r)\subset U^m(0,1)$,
$$\text{Dir}(u;U^m(x,r))\le\text{Dir}(v;U^m(x,r))+cr^{m-2+\alpha}$$
where $v$ is a Dirichlet minimizing multiple-valued function having
boundary values $u|_{\partial U^m(x,r)}\in
\partial \mathcal{Y}_2(\partial U^m(x,r),{\bf Q}_Q(\mathbb{R}^n))$ .
\end{definition}


\subsection{Regularity}

For the convenience of the reader, we shall recall a few results
that we are going to need later.

\begin{Almgren3}
Corresponding to numbers $0<s_0<\infty$, $1<K<\infty$, and (not
necessarily distinct points) $q_1,\hdots,q_Q\in\mathbb{R}^n$ we can
find $J\in\{1,\hdots,Q\},k_1,\hdots,k_J\in\{1,\hdots,Q\}$, distinct
points $p_1,\hdots,p_J\in\{q_1,\hdots,q_Q\}$, and $s_0\le r\le Cs_0$
such that
\begin{enumerate}
\item $|p_i-p_j|>2Kr$ for each $1\le i<j\le J$,
\item $\mathcal{G}\left(\sum_{i=1}^Q [[q_i]],\sum_{i=1}^J k_i[[p_i]]\right)\le C(Q)s_0/(Q-1)^{1/2},$
\item $z\in{\bf Q}_Q(\mathbb{R}^n)$ with
$\mathcal{G}\left(z,\sum_{i=1}^Q[[q_i]]\right)\le s_0$ implies
$\mathcal{G}\left(z,\sum_{i=1}^J k_i[[p_i]]\right) \le r$,
\item in case $J=1$,  diam$\left(\text{spt}\left(\sum_{i=1}^Q [[q_i]]\right)\right)\le
C(Q)s_0/(Q-1)$; here
$$C(Q)=1+\left[(2K)(Q-1)^2\right]^1+\left[(2K)(Q-1)^2\right]^2+\hdots+\left[(2K)(Q-1)^2\right]^{Q-1}.$$
\end{enumerate}
\end{Almgren3}

\begin{Almgren4}
Corresponding to
\begin{enumerate}
\item $J\in\{1,2,\hdots,Q\}$,
\item $k_1,k_2,\hdots,k_J\in\{1,2,\hdots,Q\}$ with
$k_1+k_2+\hdots+k_J=Q$,
\item distinct points
$p_1,p_2,\hdots,p_J\in\mathbb{R}^n$,
\item $0<s_1<s_2=2^{-1}\inf\left\{|p_i-p_j|:1\le i<j\le J\right\}$,
\end{enumerate}
we set
\begin{multline*}
\mathbb{P}={\bf Q}_Q(\mathbb{R}^n)\cap
\{\sum_{i=1}^Q[[q_i]]:q_1,\hdots,q_Q\in\mathbb{R}^n\;\mbox {with}\\
\text{card}\left\{i:q_i\in B^n(p_j,s_1)\right\}=k_j\;\mbox{for
each}\;j=1,\hdots,J\}.
\end{multline*}
Then there exists a map $\Phi:{\bf Q}_Q(\mathbb{R}^n)\to\mathbb{P}$
such that
\begin{enumerate}
\item $\Phi(q)=q$ whenever $q\in {\bf Q}_Q(\mathbb{R}^n)$ with
$\mathcal{G}\left(q,\sum_{i=1}^J k_i[[p_i]]\right)\le s_1$,
\item
$\Phi(q)=\sum_{j=1}^J k_j[[p_j]]$ whenever $q\in{\bf
Q}_Q(\mathbb{R}^n)$ with $\mathcal{G}\left(q,\sum_{i=1}^J
k_i[[p_i]]\right)\ge s_2$,
\item $\mathcal{G}\left(q,\Phi(q)\right)\le \mathcal{G}\left(q,\sum_{i=1}^J
k_i[[p_i]]\right)$ for each $q\in{\bf Q}_Q(\mathbb{R}^n)$, \item Lip
$\Phi\le 1+Q^{1/2}s_1/(s_2-s_1).$
\end{enumerate}
\end{Almgren4}

\begin{Almgrenm}\label{estimate}
 For $z\in{\bf Q}_Q(\mathbb{R}^n)$ with $\mathcal{G}(z,q_0)>r$,
 \begin{enumerate}
\item $\mathcal{G}(z,q)> s_0$.
\item $\mathcal{G}(z,q_0)\le\mathcal{G}(z,q)+\mathcal{G}(q,q_0)\le\mathcal{G}(z,q)+\DF{C(Q)s_0}{(Q-1)^{1/2}}\le[1+C_2(Q)]\mathcal{G}(z,q),$
$\mbox{where}\;C_2(Q)=\DF{C(Q)}{(Q-1)^{1/2}}.$\\
\item $\mathcal{G}(z,q)\le\mathcal{G}(z,q_0)+\mathcal{G}(q_0,q)\le\mathcal{G}(z,q_0)+\DF{C(Q)s_0}{(Q-1)^{1/2}}\le [1+C_2(Q)]\mathcal{G}(z,q_0)$.
Additionally, we define
$$0<s_0\le r\le s_1=K^{-1}s_2<s_2=2^{-1}\inf\{|p_i-p_j|:1\le i<j\le J\}.$$
Let $\Phi:{\bf Q}_Q(\mathbb{R}^n)\to{\bf Q}_Q(\mathbb{R}^n)$ be the
semi-retraction mapping constructed in Section 2.10 in
\cite{Frederick J. Almgren} corresponding to
$J,k_1,\hdots,k_J,p_1,\hdots,p_J,s_1,s_2$ above. For each $p\in{\bf
Q}_Q(\mathbb{R}^n)$
$$\mathcal{G}\left(\Phi(p),p\right)\le\mathcal{G}(q_0,p)=2\left[\mathcal{G}(q_0,p)-2^{-1}\mathcal{G}(q_0,p)\right]$$
so that
\item $\mathcal{G}(\Phi(p),p)\le2[\mathcal{G}\left(p,q_0)-s_1/2\right].$
\end{enumerate}
\end{Almgrenm}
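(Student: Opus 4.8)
The plan is to read the statement as a bundle of elementary consequences of the triangle inequality for $\mathcal{G}$, fed by the structural output of Sections~2.9 and~2.10 in \cite{Frederick J. Almgren} recalled just above. Throughout I would write $q_0=\sum_{i=1}^{Q}[[q_i]]$ and $q=\sum_{i=1}^{J}k_i[[p_i]]$, set $C_2(Q)=C(Q)(Q-1)^{-1/2}$, and keep in hand the three facts attached to the data $J,k_i,p_i,r$ delivered by Section~2.9: that $s_0\le r\le Cs_0$, that $\mathcal{G}(q_0,q)\le C_2(Q)s_0$, and that $\mathcal{G}(w,q_0)\le s_0$ forces $\mathcal{G}(w,q)\le r$ for every $w\in{\bf Q}_Q(\mathbb{R}^n)$.

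First I would establish (1) by contraposition: if $\mathcal{G}(z,q)\le s_0$, then $\mathcal{G}(z,q_0)\le\mathcal{G}(z,q)+\mathcal{G}(q,q_0)\le(1+C_2(Q))s_0$, and since the scale $r$ produced by the pigeonhole argument of Section~2.9 is large enough to absorb the factor $1+C_2(Q)$, this would give $\mathcal{G}(z,q_0)\le r$, contradicting the hypothesis. With (1) in hand, (2) and (3) are a single triangle inequality each: for (2), $\mathcal{G}(z,q_0)\le\mathcal{G}(z,q)+\mathcal{G}(q,q_0)\le\mathcal{G}(z,q)+C_2(Q)s_0$, and since $s_0<\mathcal{G}(z,q)$ by (1) the last summand is at most $C_2(Q)\mathcal{G}(z,q)$, whence $\mathcal{G}(z,q_0)\le(1+C_2(Q))\mathcal{G}(z,q)$; statement (3) is symmetric, using $s_0\le r<\mathcal{G}(z,q_0)$ in the place where (2) used $s_0<\mathcal{G}(z,q)$.

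For the final assertion I would first record the admissible chain $0<s_0\le r\le s_1=K^{-1}s_2<s_2=\tfrac12\inf\{|p_i-p_j|:1\le i<j\le J\}$, legitimate because the $p_i$ are the distinct points of Section~2.9 and so have positive minimal pairwise distance, and then bring in the semi-retraction $\Phi$ of Section~2.10 attached to $J,k_1,\dots,k_J,p_1,\dots,p_J,s_1,s_2$. Property~(3) of Section~2.10 gives $\mathcal{G}(\Phi(p),p)\le\mathcal{G}(p,q)$ (with $\Phi=\mathrm{id}$ on the $s_1$-ball about $q$, $\Phi\equiv q$ outside the $s_2$-ball, and $\mathcal{G}$ nonincreasing in between), and since $q$ is $\mathcal{G}$-close to $q_0$ this yields $\mathcal{G}(\Phi(p),p)\le\mathcal{G}(p,q_0)$. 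Rewriting the right side as $2[\mathcal{G}(p,q_0)-\tfrac12\mathcal{G}(p,q_0)]$ and observing that on the region where $\Phi$ is not the identity one has $\mathcal{G}(p,q_0)\ge s_1$, I would replace $\tfrac12\mathcal{G}(p,q_0)$ by the smaller quantity $s_1/2$ to obtain (4).

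The hard part is (1), together with the analogous region-by-region bookkeeping behind (4): everything else is a mechanical triangle inequality, but these two force one to reopen the pigeonhole construction of Section~2.9 far enough to see that the scale $r$ actually dominates $(1+C_2(Q))s_0$ and that $p$ lies in the annulus where $\Phi$ behaves as described. Once those quantitative facts are isolated, the listed inequalities drop out at once.
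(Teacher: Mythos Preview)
Your plan for (2) and (3) is fine and matches what the paper does: those two items are nothing more than the triangle inequality combined with the bound $\mathcal{G}(q,q_0)\le C_2(Q)s_0$ from Section~2.9(2), and indeed the paper simply records the chains of inequalities inside the statement itself without further comment.

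However, you have swapped the roles of $q$ and $q_0$. In the paper's conventions (fixed a few lines later in the comparison-function lemma, and implicit in Almgren's Theorem~2.12) one has $q=\sum_{i=1}^{Q}[[q_i]]$ and $q_0=\sum_{i=1}^{J}k_i[[p_i]]$, the opposite of what you wrote. With the correct labeling, item~(1) is \emph{literally} the contrapositive of Section~2.9(3): that statement says $\mathcal{G}(z,q)\le s_0\Rightarrow\mathcal{G}(z,q_0)\le r$, so $\mathcal{G}(z,q_0)>r\Rightarrow\mathcal{G}(z,q)>s_0$. No triangle inequality, no appeal to ``$r$ large enough to absorb $1+C_2(Q)$'' --- and that appeal is in fact unjustified, since Section~2.9 only asserts $s_0\le r\le C(Q)s_0$, so $r$ may well equal $s_0$.

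The swap also damages your argument for (4). Section~2.10(3) gives $\mathcal{G}(\Phi(p),p)\le\mathcal{G}\bigl(p,\sum_{i=1}^{J}k_i[[p_i]]\bigr)$, which in the paper's notation is $\mathcal{G}(\Phi(p),p)\le\mathcal{G}(p,q_0)$ directly; there is no need to pass through $q$ at all. Your extra step ``since $q$ is $\mathcal{G}$-close to $q_0$ this yields $\mathcal{G}(\Phi(p),p)\le\mathcal{G}(p,q_0)$'' goes the wrong way: closeness of $q$ to $q_0$ gives $\mathcal{G}(p,q)\le\mathcal{G}(p,q_0)+C_2(Q)s_0$, not the reverse inequality you need. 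Once you use the correct $q_0$, the rest of your (4) --- restricting to the region $\mathcal{G}(p,q_0)\ge s_1$ where $\Phi$ is not the identity and replacing $\tfrac12\mathcal{G}(p,q_0)$ by $s_1/2$ --- is exactly right and matches how the estimate is used downstream on the set $Z$.
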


\begin{Almgren5}\label{energy}
Suppose $f\in\mathcal{Y}_2(\partial U^m(0,1),\mathbb{R}^n),q\in V,0<r<\infty,$
and
$$A=\partial B^m(0,1)\cap\{x:|f(x)-q|>r\}$$
with $\mathcal{H}^{m-1}(A)\le m\alpha(m)/4.$ Then
\begin{multline*}
2^{-1-4/(m-1)}(2/\pi)^{2m-4+2/(m-1)}(m-1)\beta(m)^{-1}\int_A\left(|f(x)-q|-r\right)^2
d\mathcal{H}^{m-1}\\ \le
\left[\mathcal{H}^{m-1}(A)\right]^{2/(m-1)}\text{dir}(f;A).
\end{multline*}
\end{Almgren5}

We are aiming to prove the following theorem:
\begin{thm}\label{reg_almost}
Suppose $u\in\mathcal{Y}_2(U^m(0,1),{\bf Q}_Q(\mathbb{R}^n))$ is a
strictly defined  Dirichlet $(c,\alpha)$-almost minimizer such that
$\text{Dir}(u;U^m(0,1))>0$. Then
$$u\in
C_{\text{loc}}^{0,\sigma}(U^m(0,1),{\bf Q}_{Q}(\mathbb{R}^n))$$ for
some $0<\sigma<1$.
\end{thm}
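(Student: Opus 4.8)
The plan is to establish a decay estimate of the form
$$\text{Dir}(u;U^m(z,\rho)) \le C\left[\left(\frac{\rho}{r}\right)^{m-2+2\sigma}\text{Dir}(u;U^m(z,r)) + c\,r^{m-2+\alpha}\right]$$
for all concentric balls $U^m(z,\rho)\subset U^m(z,r)\subset U^m(0,1)$, and then feed this into Theorem 3.5.2 in \cite{Morrey} applied to $\xi\circ u$ exactly as in the proofs of Theorem \ref{Kregulm2} and Theorem \ref{regulm3}. The starting point is the almost-minimizing inequality compared against the Dirichlet minimizer $v$ on $U^m(z,r)$ with the same boundary values; combining it with Almgren's energy bounds (Section 2.7(2) for $m=2$, Theorem 2.12 for $m\ge 3$) gives, for $\mathcal{L}^1$-a.e.\ $r$,
$$\text{Dir}(u;U^m(z,r)) \le \lambda\, r\,\text{dir}(u;\partial U^m(z,r)) + c\,r^{m-2+\alpha},$$
where $\lambda$ is the appropriate Almgren constant ($\lambda=Q$ when $m=2$, $\lambda=\frac{1-\varepsilon_Q}{m-2}$ when $m\ge 3$). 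Writing $\phi(r)=\text{Dir}(u;U^m(z,r))$, which is absolutely continuous with $\phi'(r)\ge\text{dir}(u;\partial U^m(z,r))$, this reads $\phi(r)\le \lambda r\,\phi'(r) + c\,r^{m-2+\alpha}$, a linear first-order differential inequality.

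The next step is to integrate this ODE. Rewriting it as $\frac{d}{dr}\big(r^{-1/\lambda}\phi(r)\big) \ge -\frac{c}{\lambda}\,r^{m-3+\alpha-1/\lambda}$ and integrating from $\rho$ to $r$, one obtains a bound of the form
$$\phi(\rho) \le \left(\frac{\rho}{r}\right)^{1/\lambda}\phi(r) + c\,C(\lambda,m,\alpha)\,\rho^{1/\lambda}\big(r^{m-2+\alpha-1/\lambda} - \rho^{m-2+\alpha-1/\lambda}\big),$$
valid provided the two exponents $1/\lambda$ and $m-2+\alpha$ are not equal (the borderline case being handled by an arbitrarily small perturbation of $\sigma$, or by a logarithmic factor which is still harmless for Morrey). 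For $m=2$ we have $1/\lambda = 1/Q \le 1 = m-1$, and in fact one checks $1/Q$ versus $\alpha$; for $m\ge 3$, $1/\lambda = \frac{m-2}{1-\varepsilon_Q} > m-2$, so we may choose $\sigma$ small with $m-2+2\sigma < \min\{1/\lambda,\ m-2+\alpha\}$, and then both terms on the right are dominated by a constant times $(\rho/r)^{m-2+2\sigma}\phi(r) + c\,\rho^{m-2+\alpha}$ after absorbing; iterating/rescaling to base radius comparable to the distance to $\partial U^m(0,1)$ then yields the Campanato-type growth $\text{Dir}(\xi\circ u;U^m(z,\rho)) \le C\,\rho^{m-2+2\sigma}$ uniformly for $z$ in a compact subball, with $C$ depending on $\text{Dir}(u;U^m(0,1))$ and on $c$.

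The main obstacle I expect is the bookkeeping in the differential-inequality integration, specifically handling the inhomogeneous term $c\,r^{m-2+\alpha}$ cleanly and making sure the resonance case $1/\lambda = m-2+\alpha$ does not actually destroy the estimate: one must either argue that $\sigma$ can always be chosen to avoid it, or absorb a stray $\log$ factor. A secondary point of care is that the elementary lower bounds $\mathcal{G}\big(\xi(z),\xi(\sum[[q_i]])\big)$-type facts and the semi-retraction maps $\Phi$ recalled in Sections 2.9 and 2.10 of \cite{Frederick J. Almgren} (together with Theorem A.1.6(17)) are presumably needed not for this decay argument itself but to upgrade the Campanato growth of $\xi\circ u$ into genuine continuity of $u$ near branch points — i.e.\ to control the oscillation of $u$ in $\mathcal{G}$ by the oscillation of $\xi\circ u$, which is immediate since $\xi$ is bi-Lipschitz. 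Modulo that, the conclusion $u\in C^{0,\sigma}_{\text{loc}}(U^m(0,1),{\bf Q}_Q(\mathbb{R}^n))$ follows from Theorem 3.5.2 in \cite{Morrey} as in the quasiminimizer case.
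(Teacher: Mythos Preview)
Your approach is correct and considerably more direct than the paper's own proof. You essentially rerun the argument of Theorems~\ref{Kregulm2} and~\ref{regulm3} (the quasiminimizer case with $K=1$) while carrying the additive error $cr^{m-2+\alpha}$ through the ODE integration; since the multiplicative constant is exactly $\lambda=(1-\varepsilon_Q)/(m-2)$ (resp.\ $\lambda=Q$ for $m=2$), one always has $1/\lambda>m-2$, and a H\"older exponent $0<\sigma<\tfrac12\min\{\alpha,\,1/\lambda-(m-2)\}$ is available without any smallness hypothesis on the energy. The resonance case $1/\lambda=m-2+\alpha$ is indeed harmless: either perturb $\alpha$ downward (a $(c,\alpha)$-almost minimizer on the unit ball is also a $(c,\alpha')$-almost minimizer for any $\alpha'<\alpha$), or absorb the logarithm into a strictly smaller exponent.

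By contrast, the paper takes a much longer route adapted from the harmonic-map literature (Hardt--Kinderlehrer--Lin). It proves a hybrid inequality, then distinguishes ``strong branch points'' from the rest. At strong branch points it runs a blow-up/compactness argument to a Dirichlet-minimizing tangent and iterates an energy-improvement step; at the remaining points it builds, via Almgren's semi-retractions (Sections~2.9--2.10) and the Sobolev-type estimate of Theorem~A.1.6(17), a comparison map splitting $u$ into $J\ge 2$ pieces of strictly lower multiplicity, allowing induction on $Q$. A monotonicity formula then forces the normalized energy density to vanish at every point, so the smallness hypotheses of both decay lemmas hold everywhere. This machinery is more robust---it would survive in settings where no clean analogue of Almgren's sharp inequality (Theorem~2.12 in \cite{Frederick J. Almgren}) is available---but for the theorem as stated your elementary ODE argument suffices. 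Your closing speculation that Sections~2.9, 2.10 and Theorem~A.1.6(17) are needed to pass from $\xi\circ u$ back to $u$ is unnecessary: that passage is immediate from $\xi$ being bi-Lipschitz, and those tools enter only in the paper's alternative route, not in yours.
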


We first prove an energy growth estimate for points with small
normalized energy. This estimate is divided into two parts, on
strong branch points (see definition below) and non-strong-branch points. Then we show that the energy
density is zero for every point in the domain, which completes the
interior regularity.

\begin{definition}
For a multiple-valued function $u$, we define the strong branch set to be
\begin{multline*}
B_u=\{x\in U^m(0,1):x\;\mbox{is a Lebesgue point of}\;\xi\circ u,\\
\xi^{-1}\circ \rho\circ AV_{r,x}(\xi\circ u)=Q[[b_r]], $ $\mbox{for
any small enough radius}\; r>0, b_r\in \mathbb{R}^n\}.
\end{multline*}
where $AV_{r,x}(\xi\circ u):=\dashint_{\partial U^m(x,r)} \xi\circ
u$. Obviously, $x\in B_u$ implies that $u(x)=Q[[y]]$ for some
$y\in\mathbb{R}^n.$
\end{definition}

\begin{lem}[Hybrid Inequality]
There is a positive constant $C$, depending only on $m,n,Q,c,\alpha$
such that if $0<\lambda<1$, $0<\rho\le 1$ and $u$ is a strictly
defined $(c,\alpha)-$almost minimizer, then
\begin{equation}
E_{\rho/2}(u)\le \lambda
E_\rho(u)+C\left[\rho^\alpha+\lambda^{-1}\rho^{-m}\int_{U^m(0,\rho)}|\xi\circ
u-\mu|^2 dx\right],
\end{equation}
for any constant vector $\mu\in\mathbb{R}^{PQ}$, where
$E_r(u)=r^{2-m}\text{Dir}(u;U^m(0,r)).$
\end{lem}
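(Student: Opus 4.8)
The plan is to bound $E_{\rho/2}(u)$ by comparing $u$ on $U^m(0,\rho)$ with a competitor built from a Dirichlet minimizer and a suitable retraction, exploiting the almost-minimizing inequality together with Almgren's estimates recalled above. First I would fix a radius $r\in[\rho/2,\rho]$, chosen later by a slicing (Fubini) argument so that the restriction of $u$ to $\partial U^m(0,r)$ lies in $\partial\mathcal{Y}_2$ with
$\text{dir}(u;\partial U^m(0,r))\le C\rho^{-1}\text{Dir}(u;U^m(0,\rho))$
and so that the trace of $u$ on this sphere is close in $L^2$-average to the constant $\mu$; such $r$ exists because the radial integrals of $|\text{ap}Au|^2$ and of $|\xi\circ u-\mu|^2$ over spheres integrate to the respective bulk quantities. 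Let $v$ be the Dirichlet minimizer on $U^m(0,r)$ with boundary values $u|_{\partial U^m(0,r)}$; by the almost-minimizing property,
$\text{Dir}(u;U^m(0,r))\le \text{Dir}(v;U^m(0,r))+c\,r^{m-2+\alpha}\le \text{Dir}(v;U^m(0,r))+c\,\rho^{m-2+\alpha}.$
Since $\rho/2\le r$, this already controls $E_{\rho/2}(u)$ by $(\rho/2)^{2-m}\text{Dir}(v;U^m(0,r))+C\rho^\alpha$, so everything reduces to estimating $\text{Dir}(v;U^m(0,r))$.

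The heart of the argument is to estimate $\text{Dir}(v;U^m(0,r))$ by the right-hand side. Here I would not use $v$ itself as a competitor but rather post-compose the boundary data with the retraction-type maps $\Phi$ from Section 2.10 of \cite{Frederick J. Almgren} (the Almgren4 statement above) to produce boundary values concentrated near the well-separated points $p_1,\dots,p_J$ supplied by Section 2.9 (the Almgren3 statement), applied with $q_1,\dots,q_Q$ the components of $\xi^{-1}(\mu)$ (or of the average value) and $s_0$ of order the $L^2$-deviation of $\xi\circ u$ from $\mu$ on $\partial U^m(0,r)$; then the new minimizer $\tilde v$ splits across the wells, on each well one invokes Theorem A.1.6(17) (the Almgren5 energy estimate) to bound $\text{dir}$ of the boundary data on the "large deviation" set $A=\{|f-q|>r\}$ by $[\mathcal H^{m-1}(A)]^{2/(m-1)}\text{dir}(f;A)$, and the Lipschitz bound $\text{Lip}\,\Phi\le 1+Q^{1/2}s_1/(s_2-s_1)$ controls the change in Dirichlet energy under $\Phi$. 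Combining these yields
$\text{Dir}(v;U^m(0,r))\le \text{Dir}(\tilde v;U^m(0,r))+C\,\big[(\text{deviation})+\text{small}\cdot\text{Dir}(u;U^m(0,\rho))\big],$
and the $J\ge 2$ well-separated pieces each contribute, after rescaling, a gain factor that can be made smaller than any prescribed $\lambda$, while the $J=1$ case is handled by the diameter bound (item (4) of Almgren3) which forces $u$ on the sphere to be $Q[[\cdot]]$-close to a single point, i.e. the strong-branch situation, where Morrey-type decay again gives the $\lambda$-gain. In all cases the leftover terms are absorbed into $C[\rho^\alpha+\lambda^{-1}\rho^{-m}\int_{U^m(0,\rho)}|\xi\circ u-\mu|^2\,dx]$, using that $s_0$ and the boundary deviations are controlled by $\rho^{-1}\int_{U^m(0,\rho)}|\xi\circ u-\mu|^2$ via the slicing choice of $r$ and a trace/Poincaré estimate; the factor $\lambda^{-1}$ appears because the deviation term is divided by $\lambda$ when one balances it against the $\lambda E_\rho(u)$ term through a Young-type inequality.

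The main obstacle I anticipate is the bookkeeping in the non-strong-branch case: one must verify that the iterated application of $\Phi$ across possibly many wells, each rescaled to unit size, produces a genuine multiplicative gain $\lambda$ in the leading Dirichlet term while the accumulated error from the Lipschitz constants $\text{Lip}\,\Phi$ and from the $\text{dir}(f;A)$ estimates stays linear in the deviation and in $\rho^\alpha$; this requires carefully coupling the parameters $s_0,s_1,s_2,K$ (with $K$ large, $s_1=K^{-1}s_2$) to $\lambda$ and checking the smallness hypothesis $\mathcal H^{m-1}(A)\le m\alpha(m)/4$ needed for Almgren5, which is where the assumption $\text{Dir}(u;U^m(0,1))>0$ and a preliminary normalization enter. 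Once the hybrid inequality is established, the subsequent (routine) steps are a standard iteration/decay argument giving $E_r(u)\to 0$ with a rate, hence Morrey's Theorem 3.5.2 in \cite{Morrey} applied to $\xi\circ u$ yields Theorem \ref{reg_almost}; but that is beyond the present lemma.
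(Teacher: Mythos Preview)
Your plan misfires: you are importing the machinery of Sections~2.9, 2.10 and Theorem~A.1.6(17) of \cite{Frederick J. Almgren}, which in this paper is used for a \emph{different} lemma (the construction of a comparison function for non-strong-branch points), not for the Hybrid Inequality. The paper's proof of the present lemma is much shorter and rests on an idea you do not mention. After the Fubini slicing (which you do have) one lets $h$ be the Dirichlet minimizer on $U^m(0,r)$ with boundary $u|_{\partial U^m(0,r)}$ and writes its energy as a boundary integral,
\[
\int_{U^m(0,r)}|Dh|^2\,dx \;=\; \int_{\partial U^m(0,r)}\Big\langle \xi\circ h-\mu,\ \tfrac{\partial(\xi\circ h)}{\partial r}\Big\rangle\, d\mathcal H^{m-1},
\]
applies Cauchy--Schwarz, and bounds $\int_{\partial U^m(0,r)}|\partial h/\partial r|^2$ by $\int_{\partial U^m(0,r)}|\nabla_{\mathrm{tan}}u|^2$ via Almgren's Section~2.6. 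This displays $\mathrm{Dir}(h;U^m(0,r))$ as a \emph{product} of the square roots of the boundary $L^2$-deviation and the boundary tangential energy; Young's inequality $ab\le \tfrac{\delta}{2}a^2+\tfrac{1}{2\delta}b^2$ with $\delta\sim\lambda$ converts this product directly into $\lambda E_\rho(u)+C\lambda^{-1}\rho^{-m}\int|\xi\circ u-\mu|^2$, and the almost-minimizing inequality adds the $C\rho^\alpha$ term.

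Your route via $\Phi$-retractions and well-splitting cannot produce the $\lambda E_\rho+\lambda^{-1}(\cdots)$ trade-off for \emph{arbitrary} $0<\lambda<1$ and arbitrary $\mu\in\mathbb R^{PQ}$: the ``gain'' from splitting into $J\ge2$ wells is a fixed geometric factor, not a freely adjustable $\lambda$; the case distinction $J=1$ versus $J\ge2$ and the strong-branch dichotomy have no place in this lemma, which must hold uniformly and pointwise at the origin regardless of whether $0\in B_u$; and invoking Section~2.9 on ``the components of $\xi^{-1}(\mu)$'' is ill-posed since $\mu$ need not lie in $Q^*$. In short, you have conflated the Hybrid Inequality with the comparison-function lemma that follows it; the former needs only the boundary-integral identity for the minimizer, Cauchy--Schwarz, and Young.
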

\begin{proof}
We first use Fubini's Theorem as in Section 2.3 in \cite{hkl}, to obtain a
radius $\rho/2\le r\le\rho$ so that $u|\partial U^m(0,r)\in
\partial \mathcal{Y}_2(\partial U^{m}(0,r),{\bf Q}_Q(\mathbb{R}^n))$,
\begin{equation}\label{fubini1}
\int_{\partial U^{m}(0,r)}\left|\nabla_{\mbox{tan}} u\right|^2
d\mathcal{H}^{m-1}\le 8\int_{U^{m}(0,\rho)}|Du|^2 dx,
\end{equation}
\begin{equation}\label{fubini2}
\int_{\partial U^{m}(0,r)}|\xi\circ u-\mu|^2 d\mathcal{H}^{m-1}\le 8
\int_{ U^{m}(0,\rho)}|\xi\circ u-\mu|^2 dx.
\end{equation}
Let $h: U^m(0,r)\rightarrow {\bf Q}_Q(\mathbb{R}^n)$ be Dirichlet
minimizing with boundary values $u|\partial U^{m}(0,r)$.
\begin{equation*}
\begin{split}
\int_{U^{m}(0,r)}|Dh|^2dx &=\int_{\partial U^{m}(0,r)} \left<\xi\circ h,\DF{\partial(\xi\circ h)}{\partial r}\right>d\mathcal{H}^{m-1}\\
&=\int_{\partial U^{m}(0,r)}\left<\xi\circ h-\mu,\DF{\partial(\xi\circ h)}{\partial r}\right>d\mathcal{H}^{m-1}\\
&\le \left[\int_{\partial U^{m}(0,r)} |\xi\circ h-\mu|^2
d\mathcal{H}^{m-1}\right]^{1/2}\left[\int_{\partial B^{m}(0,r)}
\left|\DF{\partial h}{\partial r}\right|^2 d \mathcal{H}^{
m-1}\right]^{1/2}\\
&=\left[\int_{\partial U^{m}(0,r)} |\xi\circ u-\mu|^2
d\mathcal{H}^{m-1}\right]^{1/2}\left[\int_{\partial U^{m}(0,r)}
\left|\DF{\partial h}{\partial r}\right|^2 d \mathcal{H}^{
m-1}\right]^{1/2}.
\end{split}
\end{equation*}
By Section 2.6 in \cite{Frederick J. Almgren},
$$\int_{\partial U^{m}(0,r)}\left|\DF{\partial h}{\partial r}\right|^2d\mathcal{H}^{m-1}\le \int_{\partial
U^{m}(0,r)}|\nabla_{\mbox{tan}}
h|^2d\mathcal{H}^{m-1}=\int_{\partial U^{m}(0,r)} |\nabla_{
\mbox{tan}} u|^2d\mathcal{H}^{m-1}.$$ Therefore,
\begin{equation*}
\int_{U^{m}(0,r)}|Dh|^2dx\le \left[\int_{\partial
U^{m}(0,r)}|\xi\circ u-\mu|^2d
\mathcal{H}^{m-1}\right]^{1/2}\left[\int_{\partial U^{m}(0,r)}|
\nabla_{\mbox{tan}} u|^2d\mathcal{H}^{m-1}\right]^{1/2}
\end{equation*}
and
\begin{equation*}
\begin{split}
E_{\rho/2}(u) &=(\rho/2)^{2-m}\text{Dir}(u;U^m(0,\rho/2))\\
&\le (\rho/2)^{2-m}\text{Dir}(u;U^m(0,r))\\
&\le (\rho/2)^{2-m}\left[\int_{
U^{m}(0,r)} |Dh|^2 dx+cr^{m-2+\alpha}\right]\\
&\le (\rho/2)^{2-m}\left[\int_{\partial
U^{m}(0,r)}|\nabla_{\mbox{tan}} u|^2
d\mathcal{H}^{m-1}\right]^{1/2}\left[\int_{\partial U^{m}(0,r)}|
\xi\circ u-\mu|^2 d\mathcal{H}^{m-1}\right]^{1/2}\\
&+2^{m-2}c\rho^\alpha.
\end{split}
\end{equation*}
Then we obtained the desired estimate by applying the inequality $ab\le \DF{1}{2}\delta
a^2+\DF{1}{2}\delta^{-1}b^2$, with $\delta=\DF{\lambda}{2^m}$ and using (\ref{fubini1})(\ref{fubini2}) as follows
\begin{equation*}
\begin{split}
E_{\rho/2}(u)&\le
\left(\DF{\rho}{2}\right)^{2-m}\left(\DF{1}{2}\delta \int_{\partial
U^{m}(0,r)}|\nabla_{\mbox{tan}} u|^2d\mathcal{H}^{m-1}+\DF{1}{2}\delta^{-1}
\int_{\partial U^{m}(0,r)} |\xi\circ u-\mu|^2d\mathcal{H}^{m-1}\right)\\
&+2^{m-2}c\rho^\alpha\\
&\le \left(\DF{\rho}{2}\right)^{2-m}\left(4\delta\int_{U^{m}(0,\rho)}|Du|^2dx+4\delta^{-1}\int_{U^{m}(0,\rho)} |\xi\circ u-\mu|^2dx\right)+2^{m-2}c\rho^\alpha\\
&=\lambda E_\rho(u)+4^m \lambda^{-1}\rho^{-m}\int_{U^{m}(0,\rho)}|\xi\circ u-\mu|^2 dx+2^{m-2}c\rho^\alpha\\
&\le\lambda
E_\rho(u)+C\left[\rho^\alpha+\lambda^{-1}\rho^{-m}\int_{U^{m}(0,\rho)}|\xi\circ
u-\mu|^2 dx\right]
\end{split}
\end{equation*}
where $C=\max\{2^{m-2}c,4^m\}$.
\end{proof}
Let us introduce the family of functions that we will "blow up":
$$\mathcal{F}=\left\{u\in\mathcal{Y}_2(U^m(0,1),{\bf Q}_Q(\mathbb{R}^n)): u\;\mbox{is}\;(c,\alpha)-\mbox{almost minimizing and}\;0\in B_u\right\}$$

\begin{lem}[Energy Improvement]
There are positive constants $\epsilon_0,r_0,\eta$ and $\theta<1$
so that, for any $(c,\alpha)-$almost minimizer $u\in\mathcal{F}$ with
$E_{r_0}(u)<\epsilon_0^2$, one has
\begin{equation}\label{improvement}
E_{\theta r}(u)\le \theta^{\omega_{2.13}}\max\left\{\eta
r^\alpha,E_r(u)\right\},\forall 0<r<r_0,
\end{equation}
where the constant $0<\omega_{2.13}<1$ is defined by Theorem 2.13 in \cite{Frederick J. Almgren}.
\end{lem}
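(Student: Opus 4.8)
The strategy is the classical blow-up / contradiction argument, combining the Hybrid Inequality from the previous lemma with compactness for $\mathcal{Y}_2$-maps and the decay estimate of Almgren's Theorem 2.13 for Dirichlet minimizers. Suppose the conclusion fails. Then for every choice of the parameters there is a sequence of almost minimizers $u_k\in\mathcal{F}$ and radii $r_k\downarrow 0$ with $E_{r_k}(u_k)<\epsilon_k^2\to 0$ but $E_{\theta r_k}(u_k)>\theta^{\omega_{2.13}}\max\{\eta r_k^\alpha, E_{r_k}(u_k)\}$. Rescaling to the unit ball via $v_k(x)=u_k(r_k x)$ and normalizing $\xi\circ v_k$ by subtracting its average $\mu_k=AV_{1,0}(\xi\circ v_k)$ and dividing by $h_k:=\max\{\sqrt{\eta}\,r_k^{\alpha/2},\,(\mathrm{Dir}(v_k;U^m(0,1)))^{1/2}\}$, we obtain normalized maps $w_k$ with unit Dirichlet energy on $U^m(0,1)$ (up to constants) whose $L^2$-norm is controlled; the point $0\in B_{u_k}$ forces the average of $\xi\circ v_k$ over $\partial U^m(0,\cdot)$ to be of the form $Q[[b]]$, which is the crucial structural information that survives the limit.

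**Key steps, in order.** First, I would set up the rescaling and the normalization, using (as in Almgren's treatment of minimizers, or Section 2.3 of \cite{hkl}) the Fubini argument to pass to good radii. Second, I would invoke the compactness lemma for $W^{1,2}$ (the cited Lemma 23.7 in \cite{Willem}, together with Rellich) to extract a subsequence with $\xi\circ w_k\rightharpoonup W$ weakly in $W^{1,2}$ and strongly in $L^2$; lower semicontinuity gives $\mathrm{Dir}(W;U^m(0,1))\le\liminf\mathrm{Dir}(w_k;\cdot)$. Third — and this is the heart of the matter — I would show that the limit $W$ is (after applying $\rho$ and $\xi^{-1}$) a Dirichlet minimizing multiple-valued function taking the "fully branched" average value $Q[[b]]$ at the origin: the almost-minimizing defect $c\,r^{m-2+\alpha}$ scales like $r_k^\alpha h_k^{-2}\to 0$ after normalization, so the limit is honestly Dirichlet minimizing; and $0\in B_{u_k}$ passes to the limit to give $\xi^{-1}\circ\rho\circ AV_{r,0}(\xi\circ u_{\text{limit}})=Q[[b]]$ for small $r$. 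Fourth, I would apply the Hölder decay estimate for such minimizers (Theorem 2.13 in \cite{Frederick J. Almgren}, which gives $E_{\theta r}\le\theta^{\omega_{2.13}}E_r$ for Dirichlet minimizers whose blow-up is $Q$-fold) to the limit $W$, obtaining a strict improvement of energy at scale $\theta$. Fifth, I would pass this improved inequality back to $w_k$ for large $k$ using strong $L^2$ convergence together with the Hybrid Inequality — which is precisely what converts $L^2$-smallness into energy smallness at the half-scale — thereby contradicting the assumed failure of \eqref{improvement}, and fixing $\theta$, $\eta$, $\epsilon_0$, $r_0$ in the process.

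**Main obstacle.** The delicate point is the third step: guaranteeing that the blow-up limit is genuinely Dirichlet minimizing \emph{and} inherits the strong-branch condition $0\in B_W$, so that Almgren's Theorem 2.13 applies with the exponent $\omega_{2.13}$ rather than a weaker generic exponent. For this I expect to need the semi-retraction maps from Sections 2.9 and 2.10 of \cite{Frederick J. Almgren} (the maps $\Phi$ quoted above) to show that competitors for $W$ can be approximated by admissible competitors for the $u_k$, controlling the $L^\infty$-distance to $Q[[b]]$ via conclusion (4) of the quoted estimate, and I expect to invoke the energy estimate Theorem A.1.6(17) in \cite{Frederick J. Almgren} to bound the energy lost near the boundary under these retractions. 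A secondary technical nuisance is the alternative that $h_k$ is comparable to $\sqrt{\eta}\,r_k^{\alpha/2}$ rather than to the energy: in that regime the normalized energy tends to zero and the limit $W$ is constant, which also forces a contradiction with the lower bound $E_{\theta r_k}(u_k)>\theta^{\omega_{2.13}}\eta r_k^\alpha$ once $\theta^{\omega_{2.13}}<1$ is used; handling both cases uniformly is what dictates the exact form of the maximum in \eqref{improvement}.
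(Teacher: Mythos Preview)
Your overall architecture --- contradiction, blow-up normalized by the energy scale, compactness to a Dirichlet minimizer, then pass the decay of the limit back through the Hybrid Inequality --- is exactly the route the paper takes. Two points are worth correcting, however.

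First, the step you single out as the ``main obstacle'' is a misdirection. In the paper's proof the hypothesis $0\in B_{u_i}$ is used \emph{only} to make the blow-up well-defined: it guarantees $\xi^{-1}\circ\rho\circ AV_{r_i,0}(\xi\circ u_i)=Q[[b_i]]$, so that subtracting this from $u_i(r_ix)$ makes sense pointwise as a ${\bf Q}_Q(\mathbb{R}^n)$-valued map. After this, the limit $v$ is shown to be Dirichlet minimizing (via strong convergence of the blow-ups), and then the \emph{general} interior regularity for Dirichlet minimizers (Theorem~2.13 in Almgren, giving the decay exponent $\omega_{2.13}$ universally) is applied to $v$. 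There is no need to show that $0\in B_v$, and the semi-retractions of Sections~2.9--2.10 and the estimate A.1.6(17) play no role in this lemma; those tools are reserved for the \emph{next} lemma (the comparison-function construction at non-strong-branch points). So your plan is correct but overcomplicated here.

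Second, the ``pass-back'' in your Step~5 is where the paper does real work that your sketch elides. A single application of the Hybrid Inequality only relates $E_{\rho/2}$ to $E_\rho$, not $E_{\theta r_i}$ to $E_{r_i}$. The paper bounds the $L^2$ term in the Hybrid by $C\epsilon_i^2(2^j\theta)^{2\omega_{2.13}}$ (via Poincar\'e, strong convergence, and the regularity of $v$), then iterates the Hybrid Inequality $k\approx\log_2(1/\theta)$ times with the specific choice $\lambda=\theta^{(m+\omega_{2.13})/k}$ to sum the resulting geometric series and force every contribution below $\theta^{\omega_{2.13}}\epsilon_i^2/4$. Your alternative normalization by $h_k=\max\{\sqrt{\eta}\,r_k^{\alpha/2},\epsilon_k\}$ is unnecessary: the paper observes that negating the existence of $\eta$ forces $r_i^{-\alpha}E_{\theta r_i}(u_i)\to\infty$, which combined with the trivial bound $E_{\theta r_i}\le\theta^{2-m}\epsilon_i^2$ already yields $\epsilon_i^{-2}r_i^\alpha\to 0$, so only the energy-dominated regime survives.
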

\begin{proof}
If this theorem were false, then for any fixed positive
$\theta<1/2$, there would exist $(c,\alpha)-$almost minimizers
$u_i\in\mathcal{F}$ and $r_i\to 0$ for which
$$\epsilon_i^2:=E_{r_i}(u_i)\to 0$$
\begin{equation}\label{contradiction}
r_i^{-\alpha}E_{\theta r_i}(u_i)\to \infty
\end{equation}
as $i\to \infty$, but
\begin{equation}\label{choice}
E_{\theta r_i}(u_i)>\theta^{\omega_{2.13}}\epsilon_i^2
\end{equation}
for all $i$. The above relation (\ref{contradiction}) clearly implies that
$$\epsilon_i^{-2} r_i^\alpha\to 0,\;\mbox{as}\;i\to\infty.$$
We define the blowing-up sequence $v_i:U^{m}(0,1)\to{\bf
Q}_Q(\mathbb{R}^n)$ by
$$v_i(x)=\mu(\epsilon_i^{-1})_\sharp\circ \left[u_i(r_i x)-\xi^{-1}\circ\rho\circ AV_{r_i,0}(\xi\circ u_i)\right],$$
where the subtraction makes sense because $\xi^{-1}\circ\rho\circ AV_{r_i,0}(\xi\circ u_i)=Q[[b_i]]$ for some $b_i\in\mathbb{R}^n$.\\
The Dirichlet energy of $v_i$ is clearly uniformly bounded by 1. As for the $L^2$ norms, we have
\begin{equation*}
\begin{split}
&\int_{U^m(0,1)} \mathcal{G}^2\left(u_i(r_i x)-\xi^{-1}\circ\rho\circ AV_{r_i,0}(\xi\circ u_i),Q[[0]]\right) dx\\
&=\int_{U^m(0,1)} \mathcal{G}^2\left(u_i\circ \mu(r_i)(x)-\xi^{-1}
\circ\rho\circ AV_{1,0}(\xi\circ u_i\circ \mu(r_i)),Q[[0]]\right)dx\\
&\le \text{Lip}(\xi^{-1})^2\;\text{Lip}(\rho)^2\int_{U^m(0,1)}|\xi\circ u_i\circ \mu(r_i)(x)-AV_{1,0}(\xi\circ u_i\circ \mu(r_i))|^2 dx\\
&\le C\;\text{Lip}(\xi^{-1})^2\;\text{Lip}(\rho)^2\; \text{Dir}\left(\xi\circ u_i\circ\mu(r_i);U^m(0,1)\right)\\
&=C\;\text{Lip}(\xi^{-1})^2\;\text{Lip}(\rho)^2\; \epsilon_i^2,
\end{split}
\end{equation*}
where the second inequality comes from the Poincar\'{e} inequality (see Corollary 6.1 in \cite{zw}).\\
Using compactness theorem for multiple-valued functions (see Theorem
4.2 in \cite{zw1}), there is a subsequence of $v_i$ (still denoted
as $v_i$) such that $v_i$ converges weakly to
$v\in\mathcal{Y}_2(U^m(0,1),{\bf Q}_Q(\mathbb{R}^n))$. Moreover, by
similar argument as in Section 6.4 of \cite{zw}, we can show that this
convergence is actually
strong and $v$ is Dirichlet minimizing. \\
Let's estimate the extra term in the hybrid inequality
\begin{equation*}
\begin{split}
&\dashint_{U^{m}(0,rr_i)}\left|\xi\circ u_i-AV_{r,0}(\xi\circ u_i\circ \mu(r_i))\right|^2dx\\
&=\dashint_{U^{m}(0,r)}\left|\xi\circ u_i(r_i x)-AV_{r,0}(\xi\circ u_i\circ \mu(r_i))\right|^2 dx\\
&\le Cr^{2-m}\int_{ U^{m}(0,r)}|D(\xi\circ u_i\circ \mu(r_i))|^2
dx\;\quad (\mbox{by Poincar\'{e}
inequality})\\
&=C(\epsilon_i)^2r^{2-m}\int_{U^{m}(0,r)}|Dv_i|^2
dx\\
&\le C(\epsilon_i)^2r^{2-m}\int_{U^{m}(0,r)}|Dv|^2
dx\;\quad(\mbox{by strong convergence})\\
&\le C(\epsilon_i)^2r^{2-m}r^{m-2+2\omega_{2.13}}\text{Dir}(v;U^m(0,1))\quad (\text{interior regularity of Dirichlet minimizer})\\
&=C(\epsilon_i)^2 r^{2\omega_{2.13}}.
\end{split}
\end{equation*}
Applying the hybrid inequality to $u_i$ with $\rho=2\theta r_i$,
we get
\begin{equation*}
\begin{split}
E_{\theta r_i}(u_i)&\le \lambda E_{2\theta
r_i}(u_i)+C\left[\left(2\theta r_i\right)^\alpha+\lambda^{-1}
\dashint_{U^m(0,{2\theta
r_i})}\left|\xi\circ u_i-AV_{2\theta,0}(\xi\circ u_i\circ \mu(r_i))\right|^2 dx\right]\\
&\le \lambda E_{2\theta r_i}(u_i)+C\left[\left(2\theta
r_i\right)^\alpha+\lambda^{-1} C(\epsilon_i)^2(2\theta)^{2\omega_{2.13}}\right]\\
\end{split}
\end{equation*}
Choosing a positive integer $k=k(\theta)$ for which
$2^k\theta\le 1<2^{k+1}\theta$, we iterate $k-1$ more times to
obtain (we suppress all universal constant as $C$)
\begin{equation*}
\begin{split}
E_{\theta r_i}(u_i)&\le \lambda^k E_{2^k\theta
r_i}(u_i)+\sum_{j=1}^k \lambda^{j-1}C\left(2^j\theta r_i\right)^\alpha+\sum_{j=1}^k \lambda^{j-2}C(\epsilon_i)^2\left(2^j\theta\right)^{2\omega_{2.13}}\\
&\le \lambda^k 2^{m-2}\epsilon_i^2+\sum_{j=1}^\infty \lambda^{j-1}C\left(2^j\theta r_i\right)^\alpha+\sum_{j=1}^\infty \lambda^{j-2}C\left(\epsilon_i\right)^2(2^j\theta)^{2\omega_{2.13}}\\
&=\lambda^k 2^{m-2}\epsilon_i^2+C(\theta
r_i)^\alpha\DF{2^\alpha}{1-2^\alpha \lambda}+\DF{\lambda\cdot
2^{2\omega_{2.13}}}{1-\lambda\cdot
2^{2\omega_{2.13}}}C\lambda^{-2}\theta^{2\omega_{2.13}}(\epsilon_i)^2\\
&=\left[\lambda^k2^{m-2}+C\theta^\alpha \left(r_i^\alpha
\epsilon_i^{-2}\right) \DF{2^\alpha}{1-2^\alpha
\lambda}+\DF{\lambda\cdot 2^{2\omega_{2.13}}}{1-\lambda\cdot
2^{2\omega_{2.13}}}C\lambda^{-2}\theta^{2\omega_{2.13}}\right](\epsilon_i)^2
\end{split}
\end{equation*}
Taking $\lambda=\theta^{\DF{m+\omega_{2.13}}{k}}$, we have
\begin{equation*}
\lambda^k\cdot
2^{m-2}=\theta^{m+\omega_{2.13}}\cdot 2^{m-2}=\theta^m\cdot 2^{m-2}\cdot
\theta^{\omega_{2.13}}\le
(1/2)^m\cdot 2^{m-2}\theta^{\omega_{2.13}}\le \theta^{\omega_{2.13}}/4
\end{equation*}
Since $\lambda=\theta^{\DF{m+\omega_{2.13}}{k}}\le
(2^{-k})^{\DF{m+\omega_{2.13}}{k}}=2^{-(m+\omega_{2.13})}$,
\begin{equation*}
\begin{split}
\DF{\lambda\cdot 2^{2\omega_{2.13}}}{1-\lambda\cdot
2^{2\omega_{2.13}}}C\lambda^{-2}\theta^{2\omega_{2.13}}
&\le \DF{2^{2\omega_{2.13}}C}{1-2^{\omega_{2.13}-m}}\theta^{-\DF{m+\omega_{2.13}}{k}}\theta^{2\omega_{2.13}}\\
&\equiv M\theta^{\omega_{2.13}-\DF{m+\omega_{2.13}}{k}}\theta^{\omega_{2.13}},
\end{split}
\end{equation*}
where $M=\DF{2^{2\omega_{2.13}}C}{1-2^{\omega_{2.13}-m}}$.\\
Let's choose $\theta$ small enough such that
$\theta^{\omega_{2.13}-\DF{m+\omega_{2.13}}{k}}\le 1/4M$. This is possible because it
is equivalent to
$$\theta^{\omega_{2.13}}\le \theta^{\DF{m+\omega_{2.13}}{k}}/4M.$$
Notice that $\theta\ge 2^{-1-k}$, the right side of above one is
greater than
$$2^{-(k+1)(m+\omega_{2.13})/k}/4M$$
which is bounded from below although when $\theta$ goes to zero, $k$ goes to infinity.\\
Notice that $\epsilon_i^{-2} r_i^\alpha\to
0,\;\mbox{as}\;i\to\infty,$ for $i$ sufficiently large
enough,we have
$$E_{\theta r_i}(u_i)\le\left(\DF{1}{4}\theta^{\omega_{2.13}}+\DF{1}{4}\theta^{\omega_{2.13}}\right)\epsilon_i^2
<\theta^{\omega_{2.13}}\epsilon_i^2,$$ contradicting the choice of $u_i$ in (\ref{choice}).
\end{proof}
Iteration of (\ref{improvement}) as Section 3.5 of \cite{hkl} leads to the
following energy decay estimate.

\begin{thm}[Energy decay for strong branch points] If $u\in \mathcal{F}$ is $(c,\alpha)$-almost minimizing, with $r_0^{2-m}\int_{U^m(0,r_0)}|Du|^2\le
\epsilon_0^2$, then
$$r^{2-m}\int_{U^m(0,r)}|Du|^2\le Cr^\alpha,\;\mbox{for}\;0\le r\le r_0$$
where $\epsilon_0$ is as in the Energy Improvement.
\end{thm}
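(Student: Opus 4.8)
The plan is to iterate the Energy Improvement inequality \eqref{improvement} down a geometric sequence of radii and then interpolate over the intermediate radii using monotonicity of the Dirichlet energy. Throughout, write $E_r(u):=r^{2-m}\text{Dir}(u;U^m(0,r))$ and $\beta:=\omega_{2.13}$. First I would record a harmless reduction: since every ball occurring here lies in $U^m(0,1)$, all radii are $<1$, so $cr^{m-2+\alpha}\le cr^{m-2+\alpha'}$ whenever $0<\alpha'\le\alpha$; hence a strictly defined $(c,\alpha)$-almost minimizer in $\mathcal{F}$ is also a $(c,\alpha')$-almost minimizer in $\mathcal{F}$. Because only a fixed positive decay exponent is needed afterwards (it feeds into Theorem \ref{reg_almost} via Theorem 3.5.2 in \cite{Morrey}), I may and will assume $\alpha<\beta$.

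The core step is to prove, by induction on $k\ge 0$, that
$$E_{\theta^k r_0}(u)\le \theta^{k\alpha}B,\qquad B:=\max\{\epsilon_0^2,\ \eta r_0^\alpha\}.$$
For $k=0$ this is the hypothesis $E_{r_0}(u)\le\epsilon_0^2\le B$. For the inductive step I would use that the hypothesis $E_{r_0}(u)\le\epsilon_0^2$ is precisely what makes the Energy Improvement Lemma applicable at \emph{every} radius $r\in(0,r_0)$ — and, by continuity of $\rho\mapsto\text{Dir}(u;U^m(0,\rho))$, at $r_0$ as well (shrink $r_0$ infinitesimally if equality holds in the hypothesis). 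Applying \eqref{improvement} at $r=\theta^k r_0$ and invoking the inductive hypothesis,
$$E_{\theta^{k+1}r_0}(u)\le\theta^{\beta}\max\{\eta\theta^{k\alpha}r_0^\alpha,\ \theta^{k\alpha}B\}=\theta^{\beta}\theta^{k\alpha}B\le\theta^{(k+1)\alpha}B,$$
where the middle equality uses $\eta r_0^\alpha\le B$ and the last inequality uses that $\theta<1$ and $\beta>\alpha$ force $\theta^{\beta}\le\theta^{\alpha}$. A key feature is that no smallness of $E_{\theta^k r_0}(u)$ at intermediate scales needs to be propagated: the Energy Improvement requires smallness only at the top scale $r_0$, which keeps the iteration clean.

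To conclude I would take an arbitrary $0<r\le r_0$ and choose $k\ge 0$ with $\theta^{k+1}r_0<r\le\theta^k r_0$. Monotonicity of $\rho\mapsto\text{Dir}(u;U^m(0,\rho))$ together with $m\ge 2$ gives
$$E_r(u)=r^{2-m}\text{Dir}(u;U^m(0,r))\le\left(\frac{\theta^k r_0}{r}\right)^{m-2}E_{\theta^k r_0}(u)\le\theta^{-(m-2)}E_{\theta^k r_0}(u),$$
using $\theta^k r_0/r<\theta^{-1}$; also $\theta^{k+1}r_0<r$ yields $\theta^{k\alpha}<\theta^{-\alpha}(r/r_0)^\alpha$. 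Combining these with the inductive claim,
$$E_r(u)\le\theta^{-(m-2)}\theta^{k\alpha}B\le\theta^{-(m-2+\alpha)}r_0^{-\alpha}B\,r^\alpha=:Cr^\alpha,$$
with $C$ depending only on $m,n,Q,c,\alpha$ and the fixed constants $\epsilon_0,r_0,\eta,\theta$; the case $r=0$ is trivial. This is the claimed estimate.

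The step I expect to require the most care is not a computation but a bookkeeping subtlety: the iteration naturally produces the decay exponent $\min\{\alpha,\omega_{2.13}\}$ rather than $\alpha$, which forces the (harmless) reduction to $\alpha<\omega_{2.13}$ up front; a secondary nuisance is justifying \eqref{improvement} at the endpoint radius $r_0$, handled by the continuity remark. Otherwise the analytic substance has already been spent in establishing the Energy Improvement, and what remains is routine.
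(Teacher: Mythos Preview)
Your proposal is correct and follows exactly the approach the paper indicates: the paper's entire proof reads ``Iteration of (\ref{improvement}) as Section 3.5 of \cite{hkl} leads to the following energy decay estimate,'' and you have supplied precisely that iteration together with the standard interpolation between dyadic scales. Your observation that the iteration naturally yields exponent $\min\{\alpha,\omega_{2.13}\}$ rather than $\alpha$ is well taken and is not addressed in the paper; your reduction to $\alpha<\omega_{2.13}$ is a clean fix and is harmless for the downstream application to Theorem~\ref{reg_almost}.
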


Now we turn to non-strong-branch points and are going to prove an energy decay estimate by induction on $Q$. In particular, without loss of generality, we assume
$$\xi^{-1}\circ\rho\circ AV_{1,0}(\xi\circ u)\not= Q[[y]]\;\mbox{for any}\;y\in\mathbb{R}^n.$$
Let $q^{*}=\rho\circ AV_{1,0}(\xi\circ
u),q=\xi^{-1}(q^{*})=\sum_{i=1}^Q[[q_i]],$ and $q_0=\sum_{i=1}^J
k_i[[p_i]]$ is obtained from $q$ using Section 2.9 in
\cite{Frederick J. Almgren}. By the argument in Section 3 of \cite{zw},
$J\ge 2$.
\begin{lem}[Construction of a comparison function]
Assume $m,n,Q\ge 2$ and $u\in\mathcal{Y}_2(U^m(0,1),{\bf
Q}_Q(\mathbb{R}^n))$ is a strictly defined $(c,\alpha)-$almost
minimizer. Let $\; t_Q>0$ be a real number such that
 $$\left[\DF{2\text{Lip}(\rho) \text{Lip}(\xi^{-1}) \left(1+C_2(Q)\right)}{s_0}\right]^2 Lip(\xi)^2 t_Q^{m-1}\le m\alpha(m)/4.$$
If $u$ satisfies $\text{dir}(u;\partial U^m(0,1))<t_Q^{m-1}$, then
there is a comparison function $g\in\mathcal{Y}_2(U^m(0,1),{\bf
Q}_Q(\mathbb{R}^n))$ satisfying the followings:
\begin{enumerate}
\item $g=u$ on $\partial U^m(0,1),$
\item $g|_{U^m(0,{1-t_Q})}=\sum_{i=1}^J g_i,$ where $g_i\in\mathcal{Y}_2(U^m(0,{1-t_Q}),{\bf Q}_{k_i}(\mathbb{R}^n))$ is Dirichlet minimizing and $\sum_{i=1}^J k_i=Q$,
\item $\text{Dir}(g;U^m(0,1)\sim U^m(0,{1-t_Q}))\le \delta_Q\;
\text{dir}(g;\partial U^m(0,1)),$
\end{enumerate}
for some constant $\delta_Q$.
\end{lem}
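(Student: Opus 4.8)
The plan is to build $g$ in three pieces: leave $g=u$ on $\partial U^m(0,1)$, use a radial homotopy inside the thin shell $U^m(0,1)\setminus U^m(0,1-t_Q)$ to drag the boundary trace onto the ``clustered'' set $\mathbb{P}\subset{\bf Q}_Q(\mathbb{R}^n)$ that Sections 2.9 and 2.10 of \cite{Frederick J. Almgren} attach to the averaged value $q=\xi^{-1}\circ\rho\circ AV_{1,0}(\xi\circ u)$, and then fill $U^m(0,1-t_Q)$ with the $J$ separate Dirichlet minimizers prescribed by that clustering. Here $J$, the multiplicities $k_1,\dots,k_J$ with $\sum_i k_i=Q$, and the distinct points $p_1,\dots,p_J$ come from $q$ via Section 2.9 applied with a fixed $K=K(Q)$, and $q_0=\sum_{i=1}^J k_i[[p_i]]$. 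I run Section 2.10 with $s_2=2^{-1}\inf_{i\neq j}|p_i-p_j|$ and $s_1=K^{-1}s_2$, so the retraction $\Phi:{\bf Q}_Q(\mathbb{R}^n)\to\mathbb{P}$ there has $\text{Lip}(\Phi)\le 1+Q^{1/2}/(K-1)$, equals the identity on $\{z:\mathcal{G}(z,q_0)\le s_1\}$, and obeys $\mathcal{G}(\Phi(z),z)\le\mathcal{G}(z,q_0)$; moreover $s_1>r$ (the Section 2.9 radius), the balls $B^n(p_i,s_1)$ are pairwise disjoint, and $\mathcal{G}(q,q_0)\le C_2(Q)s_0$. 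I may also assume $0<t_Q<1$.

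\emph{Step 1: the bad set is small.} Put $q^{*}=\rho\circ AV_{1,0}(\xi\circ u)$; since $\rho|_{Q^{*}}=\mathrm{Id}$ one has $|\xi\circ u-q^{*}|\le\text{Lip}(\rho)\,|\xi\circ u-AV_{1,0}(\xi\circ u)|$ on $\partial U^m(0,1)$, and combining this with the Poincar\'e--Wirtinger inequality on the sphere, with $\mathcal{G}(u,q)\le\text{Lip}(\xi^{-1})\,|\xi\circ u-q^{*}|$, and with $\text{dir}(\xi\circ u;\partial U^m(0,1))\le\text{Lip}(\xi)^2\,\text{dir}(u;\partial U^m(0,1))$ yields $\int_{\partial U^m(0,1)}\mathcal{G}^2(u,q)\,d\mathcal{H}^{m-1}\le(m-1)^{-1}\text{Lip}(\xi^{-1})^2\text{Lip}(\rho)^2\text{Lip}(\xi)^2\,\text{dir}(u;\partial U^m(0,1))$. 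By Chebyshev's inequality the set $A:=\partial U^m(0,1)\cap\{x:\mathcal{G}(u(x),q)>s_0\}$ then satisfies, after inserting $\text{dir}(u;\partial U^m(0,1))<t_Q^{m-1}$ and the displayed bound on $t_Q$, the estimate $\mathcal{H}^{m-1}(A)\le m\alpha(m)/4$ --- precisely the hypothesis needed to apply Theorem A.1.6(17). Off $A$ we have $\mathcal{G}(u,q)\le s_0$, hence $\mathcal{G}(u,q_0)\le r<s_1$ by Section 2.9(3), so $\Phi\circ u=u$ on $\partial U^m(0,1)\setminus A$; on $A$, where $\mathcal{G}(u,q)>s_0\ge\mathcal{G}(q,q_0)/C_2(Q)$, the triangle inequality gives $\mathcal{G}(\Phi(u),u)\le\mathcal{G}(u,q_0)\le(1+C_2(Q))\,\mathcal{G}(u,q)$.

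\emph{Step 2: the homotopy and its energy estimate --- the main obstacle.} With $\theta:[1-t_Q,1]\to[0,1]$ affine, $\theta(1)=1$, $\theta(1-t_Q)=0$, set for $1-t_Q\le|x|\le1$
$$g(x)=\xi^{-1}\circ\rho\bigl(\theta(|x|)\,\xi(u(x/|x|))+(1-\theta(|x|))\,\xi(\Phi(u(x/|x|)))\bigr).$$
Because $\xi\circ g=\rho(\cdots)$ and the two ${\bf Q}_Q$-valued terms are homogeneous of degree $0$, the radial derivative of $\xi\circ g$ is at most $\text{Lip}(\rho)\,t_Q^{-1}\,|\xi\circ u-\xi\circ\Phi(u)|$ evaluated at $x/|x|$, hence supported on $A$, while the tangential derivative is at most $C'(Q,K)\,|\nabla_{\mathrm{tan}}(\xi\circ u)|$ by $\text{Lip}(\Phi)$. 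Integrating in polar coordinates, using $\int_{1-t_Q}^1 r^{m-1}\,dr\le1$ and $\int_{1-t_Q}^1 r^{m-3}\,dr<\infty$ for $m\ge2$, bounding $|\xi\circ u-\xi\circ\Phi(u)|\le\text{Lip}(\xi)\,\mathcal{G}(\Phi(u),u)$, and invoking Step 1 --- either the crude bound $\int_A\mathcal{G}^2(\Phi(u),u)\le(1+C_2(Q))^2\int_{\partial U^m(0,1)}\mathcal{G}^2(u,q)$ or, for a sharper $\mathcal{H}^{m-1}(A)$-weighted estimate, Theorem A.1.6(17) --- together with $\int_{\partial U^m(0,1)}|\nabla_{\mathrm{tan}}(\xi\circ u)|^2=\text{dir}(\xi\circ u;\partial U^m(0,1))$, I get
$$\text{Dir}\bigl(g;U^m(0,1)\setminus U^m(0,1-t_Q)\bigr)\le\delta_Q\,\text{dir}(u;\partial U^m(0,1)),$$
with $\delta_Q$ depending only on $m,n,Q$ (through $t_Q$, $s_0$, $K$, $C_2(Q)$ and the Lipschitz constants of $\xi,\xi^{-1},\rho,\Phi$). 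As $\theta(1)=1$ forces $g=u$ on $\partial U^m(0,1)$, and hence $\text{dir}(g;\partial U^m(0,1))=\text{dir}(u;\partial U^m(0,1))$, this gives (1) and (3). I expect this step to be the crux: one must arrange the homotopy so that the sole radial contribution to the energy is the ``correction'' $\theta'\,(\xi\circ u-\xi\circ\Phi(u))$, concentrated on the small set $A$, and then verify that the smallness of $\mathcal{H}^{m-1}(A)$ forced by the hypothesis on $t_Q$ keeps the shell energy linearly controlled by the boundary energy.

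\emph{Step 3: filling the inner ball and gluing.} At $|x|=1-t_Q$ one has $\theta=0$ and $\Phi(u(x/|x|))\in\mathbb{P}$, so $\rho$ is the identity there and the inner trace of $g$ is $\Phi\circ u(\,\cdot/|\cdot|\,)$, whose $Q$ sheets split into $J$ groups, the $i$-th of multiplicity $k_i$ and contained in the ball $B^n(p_i,s_1)$, the $B^n(p_i,s_1)$ being pairwise disjoint. By the standard decomposition of a multiple-valued Sobolev function whose values split, at every point, into clusters in fixed pairwise-disjoint balls (cf. \cite{Frederick J. Almgren}), this trace equals $\sum_{i=1}^J w_i$ with $w_i\in\partial\mathcal{Y}_2(\partial U^m(0,1-t_Q),{\bf Q}_{k_i}(\mathbb{R}^n))$; letting $g_i\in\mathcal{Y}_2(U^m(0,1-t_Q),{\bf Q}_{k_i}(\mathbb{R}^n))$ be the Dirichlet minimizer with boundary values $w_i$ (existence: Theorem 2.2(2) in \cite{Frederick J. Almgren}) and putting $g=\sum_{i=1}^J g_i$ on $U^m(0,1-t_Q)$ gives (2). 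Finally, since the superposition map ${\bf Q}_{k_1}(\mathbb{R}^n)\times\cdots\times{\bf Q}_{k_J}(\mathbb{R}^n)\to{\bf Q}_Q(\mathbb{R}^n)$ is Lipschitz and the inner and outer definitions of $g$ agree on $\partial U^m(0,1-t_Q)$, the two pieces glue to an element of $\mathcal{Y}_2(U^m(0,1),{\bf Q}_Q(\mathbb{R}^n))$, which completes the construction.
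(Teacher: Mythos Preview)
Your construction is essentially the paper's: the same radial linear interpolation $g=\xi^{-1}\circ\rho\circ G$ with $G=(1-\tau)\,\xi\circ u(\cdot/|\cdot|)+\tau\,\xi\circ\Phi\circ u(\cdot/|\cdot|)$ on the shell, the same split of $|\nabla G|^2$ into radial and tangential parts, the same use of $\Phi\circ u=u$ off the small set where $\mathcal{G}(u,q_0)$ is large, and the same filling of the inner ball by $J$ Dirichlet minimizers. One point worth flagging: where you allow yourself the ``crude bound'' on $\int_A\mathcal{G}^2(\Phi(u),u)$ via Poincar\'e alone, the paper uses Theorem~A.1.6(17) to gain the extra factor $[\mathcal{H}^{m-1}(A)]^{2/(m-1)}\le C\,\text{dir}(u;\partial U^m(0,1))^{2/(m-1)}<C\,t_Q^2$, so that the radial contribution becomes $\le C\,t_Q\,\text{dir}(u;\partial U^m(0,1))$ rather than $\le C\,t_Q^{-1}\,\text{dir}(u;\partial U^m(0,1))$; this yields $\delta_Q=O(t_Q)$, which is what the subsequent energy-decay argument actually needs (one must be able to take $\delta_Q$ small), even though the lemma as stated only asks for \emph{some} constant $\delta_Q$.
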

\begin{proof}
For each $x\in B^m(0,1)\sim B^m(0,{1-t_Q})$, we define $\tau:B^m(0,1)\sim B^m(0,{1-t_Q})\to\mathbb{R}$ and $F,G,H:B^m(0,1)\sim B^m(0,{1-t_Q})\to\mathbb{R}^{PQ}$ by
\begin{equation*}
\begin{split}
\tau(x)&=t_Q^{-1}(1-|x|),\\
F(x)&=\xi\circ u(x/|x|),\\
H(x)&=\xi\circ \Phi\circ u(x/|x|),\\
G(x)&=(1-\tau(x))F(x)+\tau(x)H(x).
\end{split}
\end{equation*}
Define $g:B^m(0,1)\sim B^m(0,{1-t_Q})\to{\bf Q}_Q(\mathbb{R}^n)$ by
$$g|B^m(0,1)\sim B^m(0,{1-t_Q})=\xi^{-1}\circ \rho\circ G|B^m(0,1)\sim B^m(0,{1-t_Q}).$$
On $\partial B^m(0,1)$, $g(x)=u(x)$. On $\partial B^m(0,{1-t_Q})$, $g(x)=\Phi\circ u(x/|x|)=\sum_{i=1}^J h_i$, where $h_i\in\partial\mathcal{Y}_2(\partial B^m(0,{1-t_Q}),{\bf Q}_{k_i}(\mathbb{R}^n))$, for each $i=1,\hdots,J$. \\Let $g_i\in\mathcal{Y}_2(U^m(0,{1-t_Q}),{\bf Q}_{k_i}(\mathbb{R}^n))$ be a Dirichlet minimizing function with boundary $h_i$ and $g|B^m(0,{1-t_Q})=\sum_{i=1}^J g_i$. This completes (1) and (2).\\
By the definition of $g$,
\begin{equation*}
\begin{split}
&\int_{U^m(0,1)\sim U^m(0,{1-t_Q})} |\nabla g|^2dx\\
&=\int_{U^m(0,1)\sim U^m(0,{1-t_Q})} \left|\nabla (\xi^{-1}\circ \rho\circ G(x))\right|^2dx\\
&\le \left[\text{Lip}(\xi^{-1})\text{Lip}(\rho)\right]^2\int_{U^m(0,1)\sim U^m(0,{1-t_Q})} \left|\nabla G(x)\right|^2dx\\
&=
\left[\text{Lip}(\xi^{-1})\text{Lip}(\rho)\right]^2\left[\int_{U^m(0,1)\sim
U^m(0,{1-t_Q})} \left|\DF{\partial G}{\partial
r}\right|^2dx+\int_{U^m(0,1)\sim U^m(0,{1-t_Q})}  |
\nabla_{\text{Tan}} G(x)|^2dx\right]
\end{split}
\end{equation*}
Also, by the definition of $G$, we can compute
$$\left|\DF{\partial G}{\partial r}\right|=\left|t_Q^{-1}(F(x)-H(x))\right|=\left|t_Q^{-1}\left(\xi\circ u(x/|x|)-\xi
\circ \Phi\circ u(x/|x|)\right)\right|,$$ and
\begin{equation*}
\begin{split}
\left|\nabla_{\partial U^m(0,{|x|})} G(x)\right|&=|x|^{-1}\left|\nabla_{\partial U^m(0,1)} G(x)\right|\\
&=\left|\DF{1-\tau(x)}{|x|}\nabla_{\partial U^m(0,1)}F(x)+\DF{\tau(x)}{|x|}\nabla_{\partial U^m(0,1)} H(x)\right|\\
&\le \DF{1}{1-t_Q}\left|\nabla_{\partial U^m(0,1)}(\xi\circ u(x/|x|))-\nabla_{\partial U^m(0,1)}(\xi\circ\Phi\circ u(x/|x|))\right|\\
&\le
\DF{1}{1-t_Q}\text{Lip}(\xi)\;\left(1+\text{Lip}(\Phi)\right)\left|\nabla_{\partial
U^m(0,1)}u(x/|x|)\right|.
\end{split}
\end{equation*}
Therefore,
\begin{equation}
\begin{split}
&\int_{U^m(0,1)\sim U^m(0,{1-t_Q})} |\nabla G(x)|^2dx \\
&\le t_Q^{-2}\int_{U^m(0,1)\sim U^m(0,{1-t_Q})} \left[\xi\circ
u(x/|x|)-\xi
\circ \Phi\circ u(x/|x|)\right]^2dx\\
&+(1-t_Q)^{-2}\left[\text{Lip}(\xi)(1+\text{Lip}(\Phi))\right]^2\int_{U^m(0,1)\sim U^m(0,{1-t_Q})} \left|\nabla_{\partial U^m(0,1)} (u(x/|x|))\right|^2dx\\
&\le t_Q^{-1}\int_{\partial U^m(0,1)} \left[\xi\circ u(x/|x|)-\xi
\circ \Phi\circ u(x/|x|)\right]^2d\mathcal{H}^{m-1}\\
&+\DF{t_Q}{(1-t_Q)^2}\left[\text{Lip}(\xi)(1+\text{Lip}(\Phi))\right]^2\text{dir}(u;\partial
U^m(0,1)).
\end{split}
\end{equation}
Now let us estimate
$$\int_{\partial U^m(0,1)} \left[\xi\circ u(x/|x|)-\xi
\circ \Phi\circ u(x/|x|)\right]^2d\mathcal{H}^{m-1}.$$ Let $Z=\partial
U^m(0,1)\cap\left\{x:\mathcal{G}(u(x),q_0)>s_1\right\}$
\begin{equation*}
\begin{split}
Z&\subset\partial U^m(0,1)\cap\left\{x:\mathcal{G}(u(x),q)>\DF{s_1}{1+C_2(Q)}\right\}(\mbox{by Theorem 2.12 in \cite{Frederick J. Almgren}})\\
&\subset\partial U^m(0,1)\cap\left\{x:|\xi\circ u(x)-q^*|>\DF{s_1}{\text{Lip}(\xi^{-1})(1+C_2(Q))}\right\}\\
&\subset\partial U^m(0,1)\cap\left\{x:|\xi\circ
u(x)-q^*|>\DF{s_0}{2\text{Lip}(\xi^{-1})(1+C_2(Q))}\right\}\equiv A.
\end{split}
\end{equation*}
Therefore,
\begin{equation}\label{estimate_of_a}
\begin{split}
&\mathcal{H}^{m-1}(A)\left(\DF{s_0}{2\text{Lip}(\xi^{-1})(1+C_2(Q))}\right)^2\\
&\le \int_{\partial U^m(0,1)} |\xi\circ u(x)-q^*|^2 d\mathcal{H}^{m-1}\\
&=\int_{\partial U^m(0,1)} |\xi\circ u(x)-\rho\circ AV_{1,0}(\xi\circ u)|^2 d\mathcal{H}^{m-1}\\
&\le \text{Lip}(\rho)^2 \int_{\partial U^m(0,1)} \left|\xi\circ
u(x)-AV_{1,0}(\xi\circ u)\right|^2 d\mathcal{H}^{m-1}
\end{split}
\end{equation}
and
\begin{equation*}
\begin{split}
\mathcal{H}^{m-1}(Z)&\le\mathcal{H}^{m-1}(A)\\
&\le  \left[\DF{2\text{Lip}(\rho)\cdot \text{Lip} (\xi^{-1})
(1+C_2(Q))}{s_0}\right]^2 \int_{\partial U^m(0,1)}
\left|\xi\circ u(x)-AV_{1,0}(\xi\circ u)\right|^2 d\mathcal{H}^{m-1}\\
&\le  \left[\DF{2\text{Lip}(\rho)\cdot \text{Lip} (\xi^{-1})
(1+C_2(Q))}{s_0}\right]^2 \text{Lip}(\xi)^2 \text{dir}(u;\partial
U^m(0,1)),
\end{split}
\end{equation*}
where the last inequality follows from \cite{Frederick J. Almgren} \S A.1.6(3).\\
This permits us to estimate
\begin{equation*}
\begin{split}
&\int_{\partial U^m(0,1)} \left|\xi\circ u-\xi\circ\Phi\circ u\right|^2 d\mathcal{H}^{m-1}\\
&=
\int_{Z} \left|\xi\circ u-\xi\circ\Phi\circ u\right|^2 d\mathcal{H}^{m-1}\\
&\le \text{Lip}(\xi)^2 \int_Z \left[\mathcal{G}(u,\Phi\circ u)\right]^2d\mathcal{H}^{m-1}\\
&\le 4 \text{Lip}(\xi)^2  \int_Z \left[\mathcal{G}(u(x),q_0)-\DF{s_1}{2}\right]^2d\mathcal{H}^{m-1}(\mbox{by Theorem 2.12(4) in \cite{Frederick J. Almgren}})\\
&\le 4 \text{Lip}(\xi)^2  \int_Z \left[\left(1+C_2(Q)\right)\;\mathcal{G}(u(x),q)-\DF{s_1}{2}\right]^2d\mathcal{H}^{m-1}\\
&(\mbox{because on}\;Z ,\mathcal{G}(u(x),q_0)>s_1\ge s_0\;\mbox{and Theorem 2.12(2) in \cite{Frederick J. Almgren}})\\
&\le 4 \left[\text{Lip}(\xi)\text{Lip}(\xi^{-1})\left(1+C_2(Q)\right)\right]^2  \int_Z \left[\left|\xi\circ u(x)-q^{*}\right|-\DF{s_1}{2\text{Lip}(\xi^{-1})\left(1+C_2(Q)\right)}\right]^2d\mathcal{H}^{m-1}\\
&\le 4 \left[\text{Lip}(\xi)\text{Lip}(\xi^{-1})(1+C_2(Q))\right]^2  \int_Z \left[\left|\xi\circ u(x)-q^{*}\right|-\DF{s_0}{2\text{Lip}(\xi^{-1})\left(1+C_2(Q)\right)}\right]^2d\mathcal{H}^{m-1}\\
&(\mbox{because}\;s_0\le s_1)\\
&\le 4 \left[\text{Lip}(\xi)\text{Lip}(\xi^{-1})(1+C_2(Q))\right]^2 C_1\left[\mathcal{H}^{m-1}(A)\right]^{2/(m-1)}\text{dir}(\xi\circ u;A)\\
&(\mbox{by Theorem A.1.6(17) in \cite{Frederick J. Almgren} and the estimate of}\;\mathcal{H}^{m-1}(A)\; \text{in}\; (\ref{estimate_of_a}))\\
&\left(C_1=[2^{-1-4/(m-1)}(2/\pi)^{2m-4+2/(m-1)}(m-1)\beta(m)^{-1}]^{-1}\right)\\
&\le C\left(\xi,\Phi,m,Q\right)\text{dir}\left(u;\partial
U^m(0,1)\right)^{1+\DF{2}{m-1}}
\end{split}
\end{equation*}
In summary, we have
\begin{equation*}
\begin{split}
&\int_{U^m(0,1)\sim U^m(0,{1-t_Q})} \left|\nabla g\right|^2dx\\&\le
C(\xi,\Phi,m,n,Q)\left[t_Q^{-1}\text{dir}(g;\partial U^m(0,1))^{1+\DF{2}{m-1}}+t_Q\text{dir}(g;\partial U^m(0,1))\right]\\
&=C(\xi,\Phi,m,n,Q)\left[t_Q^{-1}\text{dir}(g;\partial U^m(0,1))^{\DF{2}{m-1}}+t_Q\right]\text {dir}(g;\partial U^m(0,1))\\
&\le2C(\xi,\Phi,m,n,Q)\;t_Q\;\text{dir}(g;\partial U^m(0,1))\\
&=\delta_Q\;\text{dir}(g;\partial U^m(0,1)),
\end{split}
\end{equation*}
where $\delta_Q=2C(\xi,\Phi,m,n,Q) t_Q$.
\end{proof}

\begin{rem*}
\begin{enumerate}
\item Here is the scaled version:
Assume $(c,\alpha)-$almost minimizer $u\in\mathcal{Y}_2(U^m(0,r),{\bf Q}_Q(\mathbb{R}^n))$ satisfies
$$\text{dir}(u;\partial U^m(0,r))< r^{m-3}t_Q^{m-1},$$
then the constructed comparison function $g$ with $g=u$ on $\partial
U^m(0,r)$ satisfies
$$\text{Dir}(g;U^m(0,r)\sim U^m(0,{r(1-t_Q)}))\le \delta_Q r\text{dir}(u;\partial U^m(0,r)).$$
\item The smallness assumption of $\text{dir}(u;\partial U^m(0,r))$
can be replaced by the smallness of $\text{Dir}(u;U^m(0,r))$. This
is because we choose $g\in\mathcal{Y}_2(U^m(0,r),{\bf
Q}_Q(\mathbb{R}^n))$ with $g=u$ on $\partial U^m(0,r)$. By the
squeeze formula (\cite{Frederick J. Almgren} \S 2.6) , we have
\begin{equation*}
\begin{split}
2r\text{dir}(u;\partial U^m(0,r))&=2r\text{dir}(g;\partial U^m(0,r))\\
&=(m-2)\text{Dir}(g;U^m(0,r))+r\text{Dir}(g;U^m(0,r))'\\
&\le (m-2)\text{Dir}(u;U^m(0,r))+r\text{Dir}(g;U^m(0,r))'
\end{split}
\end{equation*}
So in the blowing-up analysis, by choosing a good slicing by $\partial U^m(0,r)$ and rescaling, we can get small $\text{Dir}(g;U^m(0,r))'$ and hence the smallness of $\text{dir}(u;\partial U^m(0,r)).$
\end{enumerate}

\end{rem*}
\begin{thm}[Energy decay for non-strong-branch points]
Let $m,n\ge 2$. There exists a small number $\epsilon=\epsilon(Q,m,n,c,\alpha)$ such that any $(c,\alpha)-$almost minimizer $u\in\mathcal{Y}_2(U^m(0,1),{\bf Q}_Q(\mathbb{R}^n))$ with $0\notin B_v$ such that $r_0^{2-m}\text{Dir}(u;U^m(0,r_0))<\epsilon$ for some $0<r_0\le 1$ satisfies
$$\text{Dir}(u;U^m(0,r))\le Cr^{m-2+\beta},0<r\le r_0$$
for some universal positive constants $C,\beta$.
\end{thm}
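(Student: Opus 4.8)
The plan is to argue by induction on $Q$, using the Construction of a comparison function lemma to replace $u$, at scales where it is not "concentrated", by a function that decouples into pieces carrying strictly fewer sheets.

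\emph{Base step and inductive set-up.} When $Q=1$ every point of $U^m(0,1)$ lies in $B_u$, so there is nothing to prove; equivalently the single-valued case is already contained in the theorem "Energy decay for strong branch points". Hence we may assume $Q\ge 2$ and that both decay theorems (for strong- and for non-strong-branch points) are already known for every $(c',\alpha)$-almost minimizer with at most $Q-1$ sheets, the constants depending only on $m,n,Q,c',\alpha$. Since a Dirichlet minimizing function is a $(0,\alpha)$-almost minimizer, it follows that every Dirichlet minimizing $\mathbf{Q}_k(\mathbb{R}^n)$-valued function with $k<Q$ whose normalized energy at some scale $\le 1$ is below the inductive threshold satisfies a uniform energy decay $\mathrm{Dir}(\,\cdot\,;U^m(0,s))\le C_0\,s^{m-2+\beta_0}$ with $\beta_0=\beta_0(m,n,Q)>0$.

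\emph{The one-scale estimate.} Fix a scale $\rho\le r_0$ at which (after the normalization made before the statement) $\xi^{-1}\circ\rho\circ AV_{\rho,0}(\xi\circ u)$ is not of the form $Q[[y]]$, so that Section 2.9 in \cite{Frederick J. Almgren} produces $q_0=\sum_{i=1}^J k_i[[p_i]]$ with $J\ge 2$ and each $k_i<Q$. A Fubini slicing exactly as in the proof of the Hybrid Inequality yields a radius $r\in(\rho/2,\rho)$ with $u|\partial U^m(0,r)\in\partial\mathcal{Y}_2$ and $\mathrm{dir}(u;\partial U^m(0,r))\le 8\,\mathrm{Dir}(u;U^m(0,\rho))$; if the ambient normalized energy is below a threshold $\epsilon=\epsilon(Q,m,n,c,\alpha)$ this is $<r^{m-3}t_Q^{m-1}$, so the scaled form (the Remark) of the Construction of a comparison function lemma supplies $g$ with $g=u$ on $\partial U^m(0,r)$, $g|_{U^m(0,r(1-t_Q))}=\sum_{i=1}^J g_i$ with $g_i$ Dirichlet minimizing and $\mathbf{Q}_{k_i}$-valued, and annular energy $\le\delta_Q r\,\mathrm{dir}(u;\partial U^m(0,r))$. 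The almost-minimizing inequality on $U^m(0,r)$ gives
$$\mathrm{Dir}(u;U^m(0,r))\le\sum_{i=1}^J\mathrm{Dir}\!\big(g_i;U^m(0,r(1-t_Q))\big)+\delta_Q r\,\mathrm{dir}(u;\partial U^m(0,r))+c\,r^{m-2+\alpha}.$$
Each $g_i$ has normalized energy comparable to that of $u$, hence below the inductive threshold, so the uniform decay above applies to each $g_i$; inserting $\mathrm{Dir}(g_i;U^m(0,\theta r))\le C_0(\theta r)^{m-2+\beta_0}\cdot(\text{normalized energy})$ and keeping $t_Q$ (hence $\delta_Q$) small, one obtains a decay inequality of the form
$$E_{\theta\rho}(u)\le\tfrac12\,E_\rho(u)+C\,\rho^{\alpha},\qquad E_s(u):=s^{2-m}\,\mathrm{Dir}(u;U^m(0,s)),$$
valid whenever $0\notin B_u$ forces the average to split at scale $\rho$. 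Combining this with the Energy Improvement lemma at those scales where the average is instead concentrated, a standard dyadic iteration and summation of a geometric series yield $E_s(u)\le C\,s^{\beta}$ for some $0<\beta<\min\{\alpha,\beta_0\}$, that is $\mathrm{Dir}(u;U^m(0,s))\le C\,s^{m-2+\beta}$ for $0<s\le r_0$. The same argument applies with $0$ replaced by any interior point, and feeding this Morrey-type decay into Theorem 3.5.2 in \cite{Morrey} applied to $\xi\circ u$ completes the H\"older statement of Theorem \ref{reg_almost}.

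\emph{Main obstacles.} Two points require care. First, one must check that for $u$ with small energy and $0\notin B_u$ the average $\xi^{-1}\circ\rho\circ AV_{\rho,0}(\xi\circ u)$ is non-concentrated at \emph{all} sufficiently small scales (or handle the concentrated scales separately), so that the one-scale construction can genuinely be iterated. Second, one must arrange the numerology so that the unavoidable error terms $c\,r^{m-2+\alpha}$ and $\delta_Q r\,\mathrm{dir}(u;\partial U^m(0,r))$ are of strictly lower order than the gain $r^{m-2+\beta}$; this forces $\beta<\alpha$, $\delta_Q$ (hence $t_Q$) small, and a single threshold $\epsilon$ that is simultaneously consistent across all scales and with the inductive hypothesis applied to the comparison pieces $g_i$.
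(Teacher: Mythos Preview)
Your overall strategy---use the comparison lemma to split $u$ into Dirichlet-minimizing pieces $g_i$ carrying fewer sheets and then appeal to regularity of those pieces---matches the paper's, but the mechanism by which you extract decay is different, and as written it has a gap. The problem is the claimed one-scale estimate $E_{\theta\rho}(u)\le\tfrac12 E_\rho(u)+C\rho^\alpha$. The almost-minimizing inequality compares $u$ with $g$ only on the \emph{full} ball $U^m(0,r)$, yielding
\[
\mathrm{Dir}(u;U^m(0,r))\le\sum_i\mathrm{Dir}\bigl(g_i;U^m(0,r(1-t_Q))\bigr)+\delta_Q r\,\mathrm{dir}(u;\partial U^m(0,r))+c\,r^{m-2+\alpha}.
\]
This bounds $\mathrm{Dir}(u)$ at scale $r\approx\rho$, not at scale $\theta\rho$; the only available inequality $\mathrm{Dir}(u;U^m(0,\theta\rho))\le\mathrm{Dir}(u;U^m(0,r))$ loses a factor $\theta^{2-m}$ upon normalization. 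To absorb that loss you would need the right-hand side above to be $\le\tfrac12\theta^{m-2}\rho^{m-2}E_\rho(u)$, and the inductive decay you invoke on the $g_i$ goes the wrong direction for this: it bounds $\mathrm{Dir}(g_i;U^m(0,\theta r))$ in terms of $\mathrm{Dir}(g_i;U^m(0,r(1-t_Q)))$, not the latter in terms of something small. Thus the term $\sum_i\mathrm{Dir}(g_i;U^m(0,r(1-t_Q)))$ is never actually controlled, and your discrete iteration does not close.

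The paper sidesteps this by not iterating over dyadic scales at all. It keeps the displayed inequality as a pointwise-in-$r$ statement, bounds the inner term $\sum_i\mathrm{Dir}(g_i;U^m(0,r(1-t_Q)))$ by a quantity of order $r^{m-2+2\omega_{2.13}}$ via Almgren's interior regularity of Dirichlet minimizers (Theorem 2.13 in \cite{Frederick J. Almgren}), and replaces $\mathrm{dir}(u;\partial U^m(0,r))$ by $D'(r)$ where $D(r)=\mathrm{Dir}(u;U^m(0,r))$. This produces the differential inequality
\[
D(r)\le \delta_Q\,r\,D'(r)+N\,r^{m-2+\omega},\qquad \omega=\min\{2\omega_{2.13},\alpha\},
\]
which, after multiplying by the integrating factor $r^{-1/\delta_Q}$, integrates directly to $D(r)\le C\,r^{m-2+\beta}$ with $\beta=\min\{1/\delta_Q-(m-2),\omega\}$. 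No induction on the almost-minimizer theorems is needed; the only regularity input for the $g_i$ is Almgren's result for Dirichlet minimizers, which is already available at every multiplicity. If you want to salvage a discrete argument, you must find an independent bound for $\sum_i\mathrm{Dir}(g_i;U^m(0,r(1-t_Q)))$---for instance via the Almgren-type estimate $\mathrm{Dir}(g_i)\lesssim r\,\mathrm{dir}(h_i;\partial)$---but once you do that you are essentially back at the paper's ODE.
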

\begin{proof}
Let $g$ be the comparison function on $B^m(0,r)$,
\begin{equation*}
\begin{split}
\text{Dir}(u;U^m(0,r))&\le \text{Dir}(g,U^m(0,{r(1-t_Q)}))+\delta_Q r\text{dir}(g;\partial U^m(0,r))+cr^{m-2+\alpha}\\
&=\text{Dir}(g;U^m(0,{r(1-t_Q)}))+\delta_Q r\text{dir}(u;\partial U^m(0,r))+cr^{m-2+\alpha}\\
&\le \text{Dir}(g,U^m(0,{r(1-t_Q)}))+\delta_Q r \text{Dir}(u,U^m(0,r))'+cr^{m-2+\alpha}
\end{split}
\end{equation*}
Applying the interior regularity of Dirichlet minimizing function to $g|B^m(0,{r(1-t_Q)})$,
$$\text{Dir}(u;U^m(0,r))\le r^{m-2+2\omega_{2.13}}+\delta_Q  r \text{Dir}(u;U^m(0,r))'+cr^{m-2+\alpha}.$$
Denote $\omega=\min\{2\omega_{2.13},\alpha\},N=1+c, D(r)=\text{Dir}(u;U^m(0,r))$, we have
$$D(r)\le \delta_Q r D'(r)+Nr^{m-2+\omega},$$
i.e.
$$D'-\DF{D}{\delta_Q r}+\DF{N}{\delta_Q}r^{m-3+\omega}\ge 0.$$
Multiplying $r^{\DF{-1}{\delta_Q}}$,
$$\DF{d}{dr}\left[D(r)r^{\DF{-1}{\delta_Q}}+\DF{N}{\delta_Q\left(m-2+\omega-\DF{1}{\delta_Q}\right)}r^{m-2+\omega-\DF{1}{\delta_Q}}\right]\ge 0.$$
Therefore,
$$D(r)r^{\DF{-1}{\delta_Q}}+\DF{N}{\delta_Q\left(m-2+\omega-\DF{1}{\delta_Q}\right)}r^{m-2+\omega-\DF{1}{\delta_Q}}\le M:=D(1)+\DF{N}{\delta_Q\left(m-2+\omega-\DF{1}{\delta_Q}\right)}$$
$$D(r)\le Mr^{\DF{1}{\delta_Q}}+\DF{N}{1-\delta_Q(m-2+\omega)}r^{m-2+\omega}$$
Letting
$C=\max\left\{M,\DF{N}{1-\delta_Q(m-2+\omega)}\right\},\beta=\min\left\{\DF{1}{\delta_Q}-(m-2),\omega\right\}$
(which is positive since we can choose $\delta_Q$ small enough)
finishes the proof.
\end{proof}

In spirit of Morrey's growth lemma, it only remains to show Theorem
\ref{density} to prove Theorem \ref{reg_almost}.

\begin{thm}\label{density}
For any $(c,\alpha)-$almost minimizer $u$,
$$\lim\inf_{r\downarrow 0} r^{2-m}\text{Dir}(u;U^m(x,r))=0,\forall x\in U^m(0,1).$$
\end{thm}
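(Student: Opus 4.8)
The plan is to avoid the blow-up machinery altogether and instead adapt, almost verbatim, the ODE--integration argument already used for quasiminimizers in Section~\ref{quasireg}; this in fact yields the stronger conclusion that $r^{2-m}\,\text{Dir}(u;U^m(x,r))$ \emph{converges} to $0$ (not merely along a subsequence) at every $x\in U^m(0,1)$. Fix $x$ and set $\phi(r)=\text{Dir}(u;U^m(x,r))$ for $0<r<1-|x|$. As in the proofs of Theorems~\ref{Kregulm2} and~\ref{regulm3}, $\phi$ is absolutely continuous, $\phi(0^+)=0$, and $\phi'(r)\ge\text{dir}(u;\partial U^m(x,r))$ for $\mathcal L^1$-a.e.\ $r$; for a.e.\ such $r$ one also has $u|_{\partial U^m(x,r)}\in\partial\mathcal{Y}_2(\partial U^m(x,r),{\bf Q}_Q(\mathbb{R}^n))$. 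Let $v_r$ be a strictly defined Dirichlet minimizing function with those boundary values. Combining the $(c,\alpha)$-almost-minimality of $u$ with Almgren's energy estimate for minimizers (Section~2.7(2) in \cite{Frederick J. Almgren} if $m=2$, Theorem~2.12 in \cite{Frederick J. Almgren} if $m\ge3$), and using that $\text{dir}(v_r;\partial U^m(x,r))=\text{dir}(u;\partial U^m(x,r))$ depends only on the boundary datum, one obtains for a.e.\ $r$
\begin{equation*}
\phi(r)\ \le\ \text{Dir}(v_r;U^m(x,r))+c\,r^{m-2+\alpha}\ \le\ a\,r\,\phi'(r)+c\,r^{m-2+\alpha},
\end{equation*}
where $a=Q$ when $m=2$ and $a=\tfrac{1-\varepsilon_Q}{m-2}$ when $m\ge3$.

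The decisive point is that, because $0<\varepsilon_Q<1$, the exponent $\gamma:=1/a$ is \emph{strictly} larger than $m-2$ (for $m\ge3$, $\gamma=\tfrac{m-2}{1-\varepsilon_Q}>m-2$; for $m=2$, $\gamma=1/Q>0$): this strict gain over the scaling dimension --- the feature that distinguishes the minimizer estimate from a bare stationarity (squeeze) identity --- is exactly what produces decay rather than mere boundedness. I would then rewrite the displayed inequality as $\tfrac{d}{dr}\bigl(r^{-\gamma}\phi(r)\bigr)\ge-\gamma c\,r^{m-3+\alpha-\gamma}$ for a.e.\ $r$ and integrate from $r$ to a fixed $R<1-|x|$ (legitimate, since $r^{-\gamma}\phi$ is absolutely continuous on compact subintervals of $(0,R]$), getting $\phi(r)\le\phi(R)(r/R)^{\gamma}+C_1 r^{m-2+\alpha}$ for $0<r<R$, with a harmless extra $\log(R/r)$ factor in the borderline case $\gamma=m-2+\alpha$. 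Multiplying by $r^{2-m}$ and letting $r\downarrow0$, and using $\gamma-m+2>0$ and $\alpha>0$, gives $r^{2-m}\,\text{Dir}(u;U^m(x,r))\to0$, which proves Theorem~\ref{density}. (Moreover $\phi(R)R^{-\gamma}\le R^{-\gamma}\text{Dir}(u;U^m(0,1))$ is bounded uniformly for $x$ in a compact subset, so one even gets the uniform Campanato bound $\text{Dir}(u;U^m(x,r))\le C r^{m-2+\sigma}$ with $\sigma=\min\{\gamma-m+2,\alpha\}$, whence Morrey's growth lemma (Theorem~3.5.2 in \cite{Morrey}) applied to $\xi\circ u$ already delivers Theorem~\ref{reg_almost}.)

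\textbf{Main obstacle.} There is no conceptual difficulty here, only two bookkeeping points: (i) justifying $\text{dir}(u;\partial U^m(x,r))\le\phi'(r)$ for a.e.\ $r$ --- i.e.\ that the tangential part of the approximate affine approximation of $u$ is the approximate affine approximation of $u|_{\partial U^m(x,r)}$, necessarily of smaller seminorm --- which is precisely the fact already taken for granted in the quasiminimizer proofs; and (ii) the borderline exponent in the integration. Should (i) turn out to be delicate, the fallback, more in the spirit of the machinery assembled above, is a blow-up: if $\liminf_{r\downarrow0}r^{2-m}\text{Dir}(u;U^m(x_0,r))=2\delta>0$, then the almost-monotonicity just derived gives a uniform upper bound on $r^{2-m}\text{Dir}(u;U^m(x_0,r))$ near $x_0$, so a blow-up at $x_0$ (as in the proof of the Energy Improvement lemma) converges strongly to a \emph{nontrivial} Dirichlet minimizer $w$ with $r^{2-m}\text{Dir}(w;U^m(0,r))\ge\delta$ for all $r\in(0,1)$; but every Dirichlet minimizer has vanishing energy density at every point, a contradiction. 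The hard ingredient of this variant is once again the same upper bound, i.e.\ essentially the ODE estimate above.
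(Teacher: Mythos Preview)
Your proposal is correct and takes a genuinely different, more elementary route than the paper. The paper proves Theorem~\ref{density} by first establishing an almost-monotonicity formula via comparison with the cone extension $x\mapsto u(rx/|x|)$, which yields only the factor $r/(m-2)$ in front of $\text{dir}$ and hence merely that $r^{2-m}\text{Dir}(u;U^m(x,r))+Cr^\alpha$ is nondecreasing (so the density $\Theta_u(x)$ exists); it then blows up at $x$ to obtain a homogeneous degree-zero Dirichlet minimizing tangent map $v$ with $\Theta_v(0)=\Theta_u(x)$, and finally invokes the interior regularity of minimizers to force $v$ constant, whence $\Theta_u(x)=0$. Your argument instead compares $u$ with the actual Dirichlet minimizer $v_r$ and feeds in Almgren's Theorem~2.12 (resp.\ Section~2.7(2) for $m=2$) as a black box, obtaining the strictly sharper constant $a=\tfrac{1-\varepsilon_Q}{m-2}$ (resp.\ $Q$); the resulting differential inequality $\phi(r)\le a\,r\,\phi'(r)+c\,r^{m-2+\alpha}$ with $\gamma=1/a>m-2$ integrates directly to $\phi(r)\le C\,r^{\min\{\gamma,\,m-2+\alpha\}}$, delivering not only Theorem~\ref{density} but --- as you observe --- the full H\"older regularity Theorem~\ref{reg_almost} via Morrey, bypassing the paper's Hybrid Inequality, Energy Improvement lemma, strong-branch/non-strong-branch dichotomy, and comparison-function construction altogether. (Your fallback blow-up sketch is essentially the paper's own argument.) What the paper's route buys is the standard tangent-map picture, useful for further stratification; what yours buys is brevity and an explicit exponent. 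The bookkeeping point~(i) you flag is exactly the inequality $\phi'(r)\ge\text{dir}(u;\partial U^m(x,r))$ already used without further comment in the proofs of Theorems~\ref{Kregulm2} and~\ref{regulm3}, so no new justification is required.
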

\begin{rem*}
It is well-known (e.g \cite{ly} lemma 2.1.1) that the above one holds $\mathcal{H}^{m-2}$ a.e. Also, it suffices to prove this for $x=0$.
\end{rem*}
\begin{lem}[Monotonicity Formula]
If $u$ is a $(c,\alpha)-$almost minimizer, then for any $0<t<s<1$,
we have
$$\int_t^s r^{2-m}\int_{\partial U^{m}(0,r)} \left|\DF{\partial u}{\partial
r}\right|^2 d\mathcal{H}^{m-1}
dr-\DF{(m-2)c}{\alpha}(s^\alpha-t^\alpha)$$
$$\le s^{2-m}\text{Dir}(u;U^m(0,s))-t^{2-m}\text{Dir}(u;U^m(0,t)).$$ In particular,
$r^{2-m}\text{Dir}(u;U^m(0,r))+\DF{(m-2)c}{\alpha} r^\alpha$ is nondecreasing in
$r$. Hence the limiting density
$$\Theta_u(0):=\lim_{r\downarrow 0}
r^{2-m}\int_{U^{m}(0,r)}|Du|^2 dx$$ exists.
\end{lem}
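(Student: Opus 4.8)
The plan is to convert the statement into a linear first-order differential inequality for the absolutely continuous function $\phi(r):=\text{Dir}(u;U^m(0,r))$ and then integrate it against the integrating factor $r^{2-m}$. The case $m=2$ is immediate: there the asserted inequality reduces to $\int_t^s\int_{\partial U^2(0,r)}|\partial u/\partial r|^2\,d\mathcal H^1\,dr\le\phi(s)-\phi(t)$, which follows from $|\partial u/\partial r|^2\le|\text{ap}Au|^2$, while $\phi$ is trivially nondecreasing with $\phi(r)\to 0$ as $r\downarrow 0$, so the limiting density is $0$. Hence assume $m\ge 3$ from now on.

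First I would recall, as in the proof of Theorem \ref{Kregulm2}, that $\phi$ is absolutely continuous on every $[t,s]\subset(0,1)$ with $\phi'(r)=\int_{\partial U^m(0,r)}|\text{ap}Au(x)|^2\,d\mathcal H^{m-1}x$ for $\mathcal L^1$-a.e. $r$; combined with the orthogonal splitting $|\text{ap}Au|^2=|\partial u/\partial r|^2+|\nabla_{\text{tan}}u|^2$, which holds $\mathcal H^{m-1}$-a.e. on $\partial U^m(0,r)$ for a.e. $r$ (Section 2.6 in \cite{Frederick J. Almgren}), this yields
\[
\phi'(r)=\int_{\partial U^m(0,r)}\Big|\frac{\partial u}{\partial r}\Big|^2\,d\mathcal H^{m-1}+\text{dir}(u;\partial U^m(0,r))\qquad\text{for a.e. }r .
\]
For such $r$ the trace $u|_{\partial U^m(0,r)}$ lies in $\partial\mathcal Y_2(\partial U^m(0,r),{\bf Q}_Q(\mathbb R^n))$, so I may choose a Dirichlet minimizer $v_r$ with that boundary datum; by almost-minimality together with Theorem 2.12 in \cite{Frederick J. Almgren} applied to $v_r$ (discarding the factor $1-\varepsilon_Q<1$),
\[
\phi(r)\le\text{Dir}(v_r;U^m(0,r))+cr^{m-2+\alpha}\le\frac{r}{m-2}\,\text{dir}(u;\partial U^m(0,r))+cr^{m-2+\alpha},
\]
so that $\text{dir}(u;\partial U^m(0,r))\ge\frac{m-2}{r}\phi(r)-(m-2)cr^{m-3+\alpha}$ for a.e. $r$.

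Inserting this lower bound for $\text{dir}(u;\partial U^m(0,r))$ into the identity for $\phi'(r)$ and rearranging gives, for a.e. $r$,
\[
\frac{d}{dr}\!\big(r^{2-m}\phi(r)\big)=r^{2-m}\phi'(r)-(m-2)r^{1-m}\phi(r)\ge r^{2-m}\!\int_{\partial U^m(0,r)}\!\Big|\frac{\partial u}{\partial r}\Big|^2 d\mathcal H^{m-1}-(m-2)cr^{\alpha-1}.
\]
Since $r\mapsto r^{2-m}\phi(r)$ is absolutely continuous on $[t,s]$, integrating this inequality from $t$ to $s$ and using $\int_t^s r^{\alpha-1}\,dr=(s^\alpha-t^\alpha)/\alpha$ produces exactly the claimed inequality. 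Discarding the nonnegative double integral then shows that $r\mapsto r^{2-m}\text{Dir}(u;U^m(0,r))+\frac{(m-2)c}{\alpha}r^\alpha$ is nondecreasing; being nonnegative, it has a finite limit as $r\downarrow 0$, and since the correction term tends to $0$ the limit $\Theta_u(0)=\lim_{r\downarrow 0}r^{2-m}\text{Dir}(u;U^m(0,r))$ exists.

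I expect the only delicate point to be the collection of $\mathcal L^1$-a.e.-in-$r$ facts used in the second paragraph — the radial/tangential orthogonal splitting of $|\text{ap}Au|^2$ for multiple-valued functions and the membership of the trace $u|_{\partial U^m(0,r)}$ in $\partial\mathcal Y_2$ — but these are precisely what is supplied by Section 2.6 and the slicing theory in \cite{Frederick J. Almgren} and are already invoked elsewhere in this paper; granting them, the monotonicity formula is a one-line integration of a linear differential inequality.
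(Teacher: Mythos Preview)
Your proof is correct and follows essentially the same route as the paper: derive the differential inequality $\phi(r)\le \frac{r}{m-2}\,\text{dir}(u;\partial U^m(0,r))+cr^{m-2+\alpha}$, combine it with the radial/tangential splitting of $\phi'(r)$, and integrate against $r^{2-m}$. The only cosmetic difference is that the paper obtains this inequality by comparing directly with the cone extension $g(x)=u(rx/|x|)$ (whose energy on $U^m(0,r)$ equals $\frac{r}{m-2}\int_{\partial U^m(0,r)}|\nabla_{\text{tan}}u|^2$), whereas you route through the minimizer $v_r$ and Theorem~2.12, discarding the factor $1-\varepsilon_Q$; the cone comparison is the more elementary of the two.
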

\begin{proof}
For any $0<r<1$, define a function
$g:B^m(0,1)\to{\bf Q}_Q(\mathbb{R}^n)$ by setting
\begin{equation*}
g(x)=
\begin{cases} f\left(r\DF{x}{|x|}\right) & |x|\le r\\
f(x) & \mbox{otherwise}.
\end{cases}
\end{equation*}
By definition of almost minimizer, we have
\begin{equation*}
\begin{split}
&\int_{U^{m}(0,r)}|Du|^2 dx\\
&\le
cr^{m-2+\alpha}+\int_{U^{m}(0,r)}|Dg|^2 dx\\
&=cr^{m-2+\alpha}+\DF{r}{m-2}\int_{\partial
U^{m}(0,r)}\left|\nabla_{\text{tan}}u\right|^2
d\mathcal{H}^{m-1}\\
&=cr^{m-2+\alpha}+\DF{r}{m-2}\left[\DF{d}{d\rho}|_{\rho=r}\int_{U^m(0,\rho)}
|Du|^2 dx-\int_{\partial U^{m}(0,r)}\left|\DF{\partial
u}{\partial r}\right|^2 d\mathcal{H}^{m-1}\right]\\
\end{split}
\end{equation*}
Multiplying by $(m-2)r^{1-m}$ to both sides,
$$r^{2-m}\int_{\partial U^{m}(0,r)}\left|\DF{\partial
u}{\partial r}\right|^2 d\mathcal{H}^{m-1}-(m-2)cr^{\alpha-1}\le
\DF{d}{d\rho}|_{\rho=r}(\rho^{2-m}\text{Dir}(u;U^m(0,\rho))).$$ Then
integrate this from $t$ to $s$.
\end{proof}
From the monotonicity formula, following the standard technique (e.g. \cite{sl}), we obtain a tangent map $v\in\mathcal{Y}_2(B^m(0,1),{\bf Q}_Q(\mathbb{R}^n))$ of $u$ at the origin.
$v$ is a homogeneous degree zero Dirichlet minimizing function with
$$\theta_v(0)=\theta_u(0).$$
From the interior regularity, $v$ has to be constant hence
$\theta_u(0)=\theta_v(0)=0$. This finishes the proof of Theorem
\ref{density} and also the proof of Theorem \ref{reg_almost}.

\subsection{Branch set of almost minimizer}

We will now produce a Dirichlet almost minimizer with a fractal
branch set. We begin with some definitions:

\begin{enumerate}
\item A {\it losange} above an interval $I=[a,b]\subset\mathbb{R}$
is a multiple-valued function $u:[a,b]\rightarrow {\bf
Q}_2(\mathbb{R})$ whose the graph is a parallelogram with the
following vertices $[a,0], [b,0], [(a+b)/2, (b-a)/2],
[(a+b)/2,(a-b)/2]$ .
\item  A {\it pluri-losange} is a
multiple-valued function $u:[0,1]\rightarrow {\bf Q}_2(\mathbb{R})$
which admits a partition of $[0,1]$ in intervals $\{I_j\}$ such that
$u$ is a losange above $I_j$ or a map of the form $x\rightarrow 2[[
0]]$ above $I_j$ for each $j$.
\end{enumerate}

Figure 2 gives examples of pluri-losanges. It is clear that a
pluri-losange $u$ is Lipschitz with Lip$(u)\leq \sqrt{2}$ and one
readily checks that a pluri-losange is a Dirichlet $(2,1/2)-$almost
minimizer. We also notice that a pluri-losange is not a Dirichlet
quasiminimizer or an $\omega$-minimizer.

We will now create a particular sequence of pluri-losanges
$\{u^i\}$. We define $u^1:[0,1]\rightarrow{\bf Q}_2(\mathbb{R})$ by
requiring that
\begin{itemize}
\item[-] $u^1$ is a pluri-losange, \item[-] $u^1(1/3)=0$, \item[-] $u^1$ is a losange only
above $[1/3,2/3]$.
\end{itemize}
We define $u^2:[0,1]\rightarrow{\bf Q}_2(\mathbb{R})$ by requiring
that
\begin{itemize}
\item[-] $u^2$ is a pluri-losange, \item[-]
$u^2(1/3)=0$, \item[-] $u^2$ is a losange only above the intervals
$[1/3,2/3],[1/9,2/9]$ and $[7/9,8/9]$.
\end{itemize}

\begin{figure}[htbp]\label{cantor}
    \begin{center}
    \includegraphics[height=4cm,width=4cm]{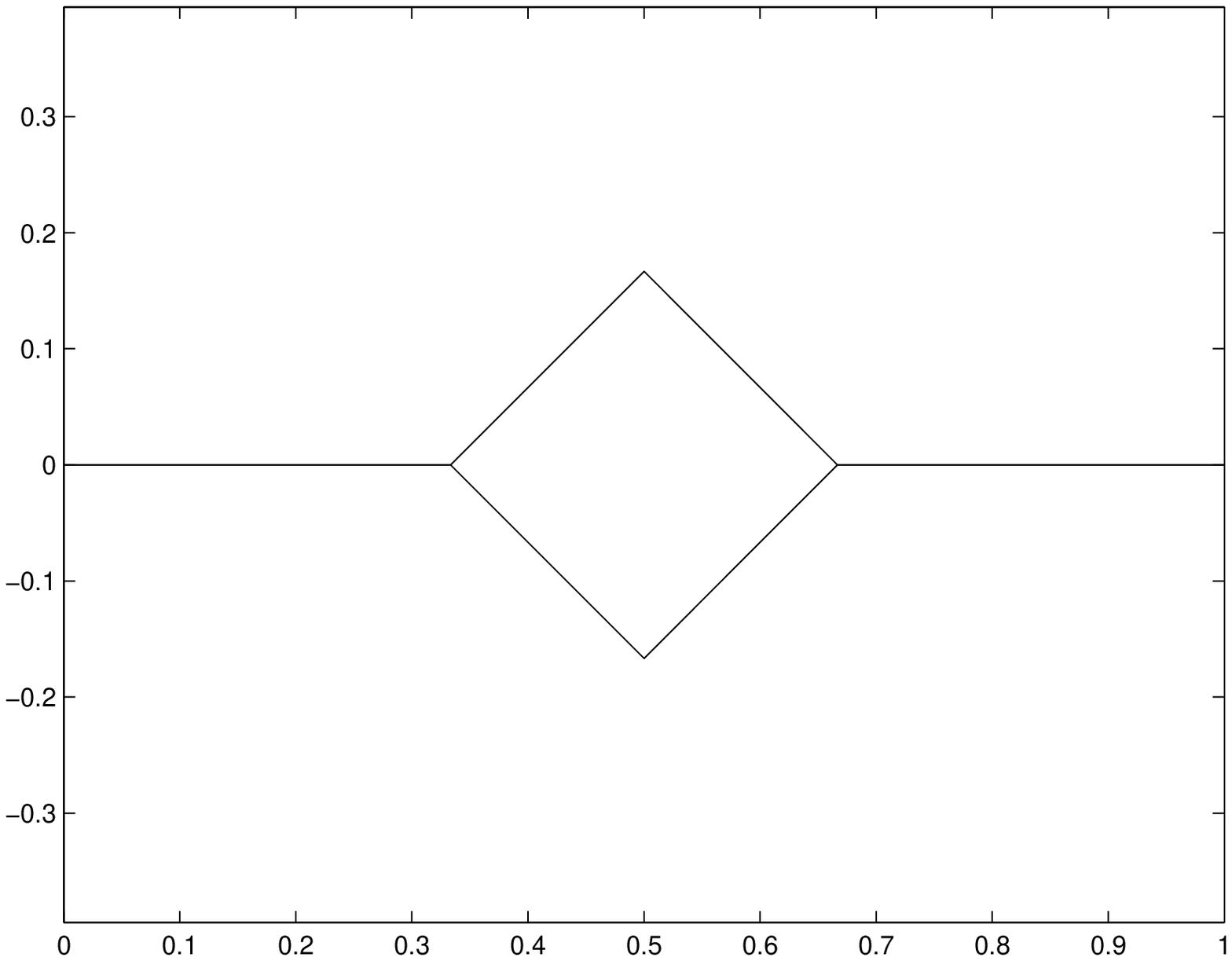}
    \includegraphics[height=4cm,width=4cm]{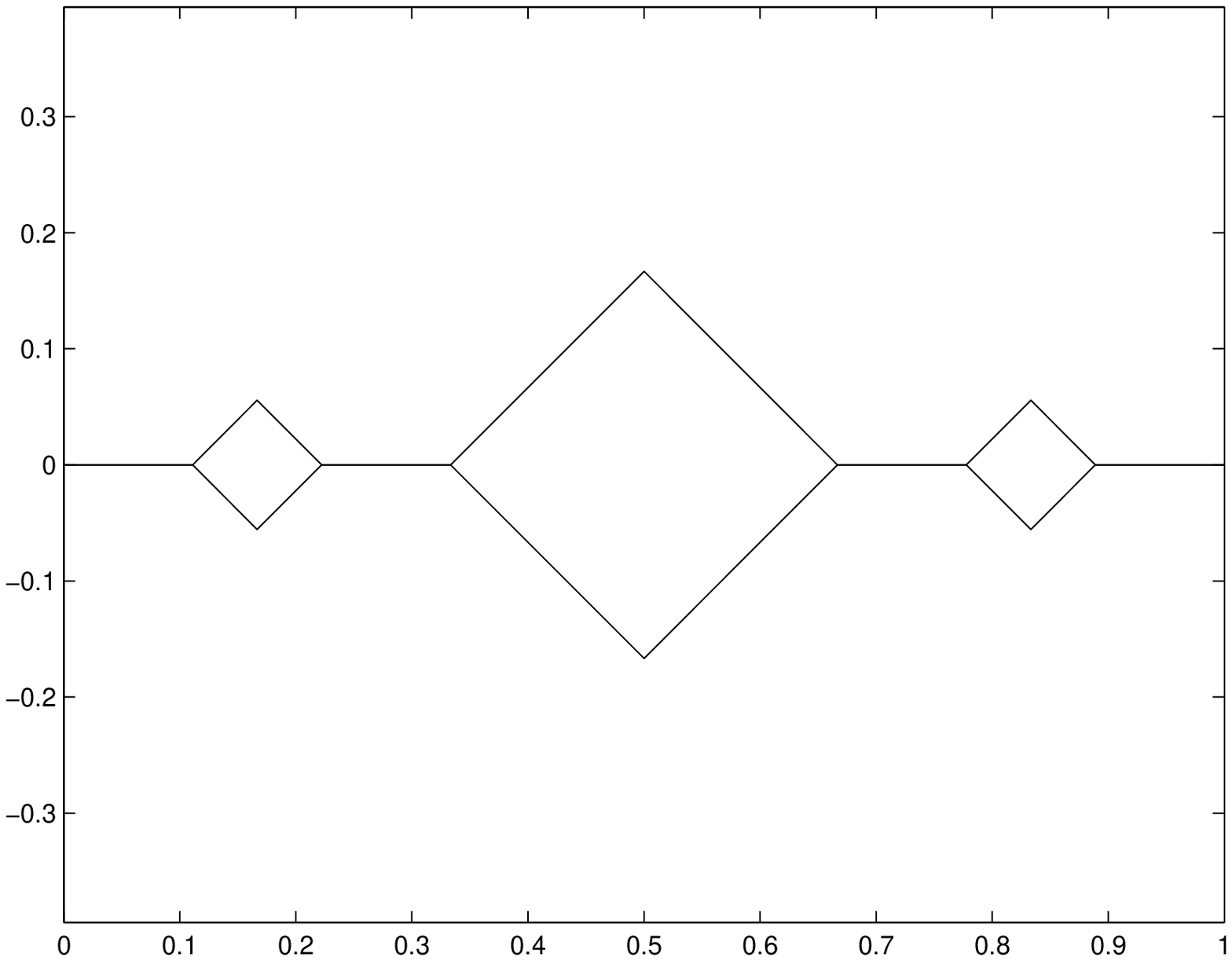}
    \includegraphics[height=4cm,width=4cm]{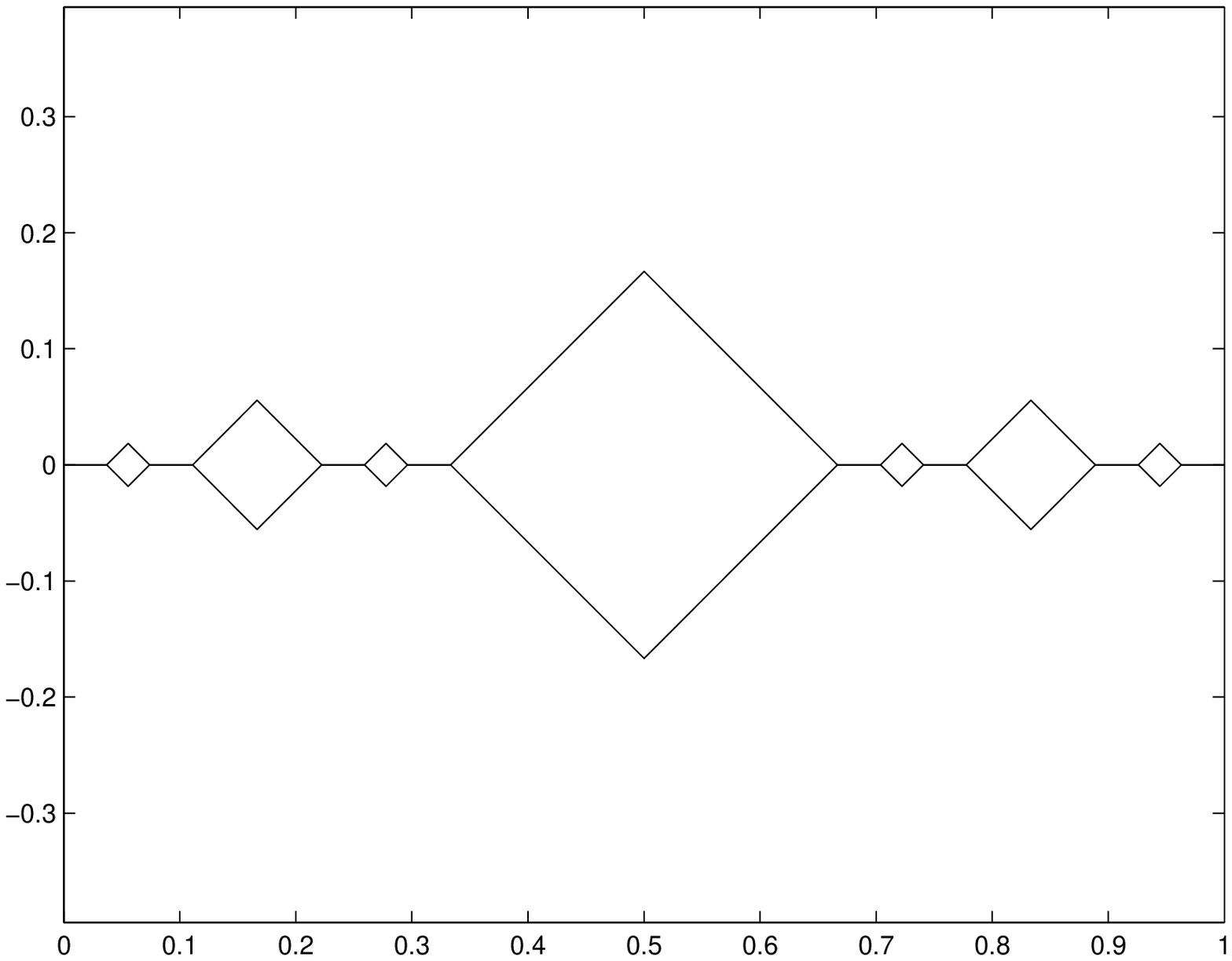}
    \caption{The pluri-losanges $u^1$, $u^2$ and $u^3$.}
    \end{center}
\end{figure}
The reader will easily imagine how the remaining terms of the
sequence are defined.
 By mimicking the arguments used in Section
\ref{quasiex}, we can extract a subsequence which converges
uniformly to a Lipschitz $(2,1/2)-$almost minimizer
$u:[0,1]\rightarrow {\bf Q}_2(\mathbb{R})$ whose the branch set is
the Cantor ternary set. This construction can also be adapted in
order to obtain an almost minimizer with a fat Cantor branch set.

\section{Acknowledgements}
The authors are thankful to Thierry De Pauw and Robert Hardt for
suggesting the problem and helpful discussions.

\end{document}